\documentclass{article}
\usepackage[utf8]{inputenc}
\usepackage[margin=1in]{geometry}
\usepackage{amsmath,amsthm,amssymb}
\usepackage{enumerate}
\usepackage{xfrac}
\usepackage[colorlinks=true,hidelinks]{hyperref}
    \AtBeginDocument{}
\usepackage{tikz} % for tikz pictures
\usetikzlibrary{arrows}
\usetikzlibrary{patterns}
\usepackage{pgfplots} % for graphing functions in tikz
\usetikzlibrary{decorations.markings} % use for arrows in tikz
\usetikzlibrary{3d,calc} %3d plotting in tikz

\usepackage{bbold} % Only here for \mathbb{1}

\usepackage{fancyhdr}
\pagestyle{fancy}

\fancyhead[l]{\slshape\nouppercase{\rightmark}}
\fancyhead[r]{\slshape\nouppercase{\leftmark}}

%%% MACROS %%%

\newcommand{\A}{\mathbb{A}}
\newcommand{\C}{\mathbb{C}}

\newcommand{\F}{\mathbb{F}}

\newcommand{\N}{\mathbb{N}}
\renewcommand{\P}{\mathbb{P}}
\newcommand{\Q}{\mathbb{Q}}

\newcommand{\U}{\mathcal{U}}

\newcommand{\X}{\mathcal{X}}

\newcommand{\Z}{\mathbb{Z}}

\newcommand{\Fr}{\text{Fr}} % Added 11/14

\renewcommand{\char}{\ensuremath{\operatorname{char}}}
\newcommand{\Spec}{\ensuremath{\operatorname{Spec}}}

\newcommand{\proj}{\ensuremath{\operatorname{proj}}}

\newcommand{\orb}{\mathcal{O}}

\newcommand*\cat[1]{{\tt #1}}

\newcommand{\Hom}{\ensuremath{\operatorname{Hom}}}

\newcommand{\Fun}{\ensuremath{\operatorname{Fun}}}

\newcommand{\Tr}{\ensuremath{\operatorname{Tr}}}
\newcommand{\Gal}{\ensuremath{\operatorname{Gal}}}
\newcommand{\Sym}{\ensuremath{\operatorname{Sym}}}

\newcommand{\eff}{\ensuremath{\operatorname{eff}}}

\newcommand{\Frob}{\ensuremath{\operatorname{Frob}}}

\newcommand{\supp}{\ensuremath{\operatorname{supp}}}

\newcommand{\len}{\ensuremath{\operatorname{len}}}
\newcommand{\invlim}{\displaystyle\lim_{\longleftarrow}}

\newcommand{\legen}[2]{\left (\frac{#1}{#2}\right )}

\newcommand{\roof}[3]{\tikz[baseline=10]{
  \node at (-1,0) (a) {$#1$};
  \node at (1,0) (b) {$#2$};
  \node at (0,1) (top) {$#3$};
  \draw[->] (top) -- (a);
  \draw[->] (top) -- (b);
}}
\newcommand{\roofx}[5]{\tikz[baseline=10]{
  \node at (-1,0) (a) {$#1$};
  \node at (1,0) (b) {$#2$};
  \node at (0,1) (top) {$#3$};
  \draw[->] (top) -- (a) node[left,pos=.2] {$#4$};
  \draw[->] (top) -- (b) node[right,pos=.2] {$#5$};
}}

\tikzset{->-/.style={decoration={markings,mark=at position #1 with {\arrow{>}}},postaction={decorate}}}

\newcommand{\ds}{\displaystyle}

%%% ENVIRONMENTS %%%

\newtheorem{thm}{Theorem}[section]
\newtheorem{prop}[thm]{Proposition}

\newtheorem{lem}[thm]{Lemma}
\newtheorem{defn}[thm]{Definition}
\newtheorem{cor}[thm]{Corollary}

\newtheorem{rem}[thm]{Remark}
  \let\oldrem\rem
  \renewcommand{\rem}{\oldrem\normalfont}
\newtheorem{question}[thm]{Question}
\newtheorem{ex}[thm]{Example}
  \let\oldex\ex
  \renewcommand{\ex}{\oldex\normalfont}

\makeatletter
\let\@fnsymbol\@arabic
\makeatother

\title{Categorifying Zeta Functions for Quadratic Covers}
\author{Jon Aycock and Andrew Kobin\thanks{The second author was partially supported by the American Mathematical Society and the Simons Foundation.}} 
\date{\today}

\begin{document}

\setcounter{MaxMatrixCols}{20}

\maketitle

\rhead{\thepage}
\cfoot{}

\begin{abstract}
In various contexts, the zeta function of an object splits into a product of $L$-functions. We categorify this product formula for quadratic covers of objects in the following contexts: quadratic extensions of number fields, ramified double covers of algebraic curves, ramified double covers of topological spaces and Galois double covers of graphs. Our unified approach utilizes objective linear algebra in the abstract incidence algebra of each object, interpreted appropriately. We also provide several applications: for a hyperelliptic curve $C$ over a finite field, we prove a collection of combinatorial formulas relating the number of ramified, split and inert points on $C$ to the overall point count of $C$; and for a graph $G$, we deduce analogous combinatorial formulas for the numbers of split and inert primes in a Galois double cover $\widetilde{G}\rightarrow G$. We then use the formulas for graphs to deduce asymptotic counts of cycles in supersingular isogeny graphs and certain associated dual graphs of special fibers of Shimura curves. Finally, we analyze quadratic reciprocity from the perspective of zeta functions. 
\end{abstract}

\section{Introduction}
\label{sec:intro}

This paper offers a unified category theoretic perspective on factorizations of zeta functions in several contexts: number fields, curves over finite fields, Riemann surfaces, finite CW complexes and finite graphs. 

Let $K/\Q$ be a number field. Its Dedekind zeta function is defined as the formal expression 
$$
\zeta_{K}(s) = \sum_{\frak{a}\in I_{K}^{+}} \frac{1}{N(\frak{a})^{s}} = \sum_{n = 1}^{\infty} \frac{\#\{\frak{a}\mid N(\frak{a}) = n\}}{n^{s}}
$$
where $s$ is a complex number, $N = N_{K/\Q}$ is the field norm and $I_{K}^{+}$ denotes the set of nonzero ideals in the ring of integers $\orb_{K}$ of $K$. For example, $\zeta_{\Q}(s)$ is the Riemann zeta function. The formal Dirichlet series $\zeta_{K}(s)$ converges for complex numbers $s$ with real part greater than $1$ and, like the Riemann zeta function, $\zeta_{K}(s)$ admits a meromorphic continuation to the complex plane, satisfies a functional equation and has a well-formulated Riemann Hypothesis predicting the location of its nontrivial zeroes. Moreover, special values of $\zeta_{K}(s)$ contain rich arithmetic information which remains the subject of ongoing study. 

It is well-known that when $K/\Q$ is a quadratic extension, the zeta function factors in the ring of Dirichlet series as 
\begin{equation}\label{eq:quadzeta0}
\zeta_{K}(s) = \zeta_{\Q}(s)L(\chi,s)
\end{equation}
where $L(\chi,s)$ is the Dirichlet $L$-function 
$$
L(\chi,s) = \sum_{n = 1}^{\infty} \frac{\chi(n)}{n^{s}}
$$
associated to the Dirichlet character $\chi = \legen{D}{\cdot}$, with $D$ the discriminant of $K$. More generally, for any quadratic extension $K/F$, where $F$ is an arbitrary number field, the zeta function of $K$ splits as 
\begin{equation}\label{eq:quadzeta}
\zeta_{K}(s) = \zeta_{F}(s)L(\chi,s)
\end{equation}
where $\chi$ is the unique quadratic Hecke character on the group of fractional ideals $I_{F}$ of the ring of integers $\orb_{F}$ associated to the extension $K/F$ by class field theory. More generally, every zeta function $\zeta_{K}(s)$ splits as a product of $L$-functions coming from the representation theory of the Galois group of $K$. %%% Not known in general I think? Known for abelian extensions, maybe for Galois extensions?

An analogue of formula (\ref{eq:quadzeta}) holds for curves over finite fields. To describe this, let $\F_{q}$ be a finite field and let $C$ be an algebraic curve over $\F_{q}$. Its zeta function is defined as the formal power series 
$$
Z(C,t) = \exp\left [\sum_{n = 1}^{\infty} \frac{\#C(\F_{q^{n}})}{n}t^{n}\right ]
$$
where $\#C(\F_{q^{n}})$ denotes the number of $\F_{q^{n}}$-points of $C$ and $\exp(t) = 1 + \sum_{n = 1}^{\infty} \frac{1}{n!}t^{n}$ is the formal exponential power series. For example, the zeta function of $\A^{1}$ is 
$$
Z(\A^{1},t) = \frac{1}{1 - qt}
$$
while the zeta function of $\P^{1}$ is 
$$
Z(\P^{1},t) = \frac{1}{(1 - t)(1 - qt)}. 
$$
By the Weil Conjectures, $Z(C,t)$ is a rational function, satisfies a functional equation and its zeroes and poles satisfy an analogue of the Riemann Hypothesis. 

Expanding further on the rationality property, it is well-known, though not at all obvious, that when $C$ is smooth, projective and geometrically integral of genus $g$, its zeta function can be written
$$
Z(C,t) = \frac{L(C,t)}{(1 - t)(1 - qt)}
$$
where $L(C,t)$ is the $L$-function of $C$, a degree $2g$ polynomial with integer coefficients. In other words, $Z(C,t)$ factors in the ring of formal power series as 
\begin{equation}\label{eq:Lfactor}
Z(C,t) = Z(\P^{1},t)L(C,t). 
\end{equation}

The point counting zeta function is only one generating function that encodes information about a curve $C/\F_{q}$. If $C$ lifts to a curve $\widetilde{C}$ in characteristic $0$, then the complex points of $\widetilde{C}$ have the structure of a Riemann surface, whose invariants can be extracted using generating functions. For instance, the Euler characteristic is encoded by the Macdonald polynomial $M(\widetilde{C},t) = (1 - t)^{-\chi(\widetilde{C})}$. In general, many formulas for $\chi$ can be encoded with Macdonald polynomials; for example, when $\pi : Y\rightarrow X$ is a branched cover of finite CW complexes of degree $n$, 
$$
\chi(Y) = n\chi(X) - r
$$
where $r = r(\pi)$ is the ramification number of $\pi$. Using Macdonald polynomials, 
\begin{equation}\label{eq:Macdonaldsplit}
M(Y,t) = M(X,t)^{n}(1 - t)^{r}. 
\end{equation}
For our purposes, $M(X,t)^{n - 1}(1 - t)^{r}$ plays the role of a ``topological $L$-function'' for the cover $\pi$. 

In a parallel direction, every finite graph $X$ admits an analogue of the zeta function of a curve over a finite field, called its Ihara zeta function $Z(X,u)$, defined by the formula 
\begin{equation}
    Z(X,u) = \prod_{\gamma} \frac{1}{1-u^{\len(\gamma)}},
\end{equation}
where the product is over so-called {\it prime cycles} in $X$ (see Section~\ref{sec:graphs}). The Ihara zeta function of a finite graph is a rational function in $u$, cf.~\cite[Thm.~2.5]{ter}, and for a $2$-to-$1$ Galois cover of graphs $Y \to X$, we can factor the zeta function of $Y$ as
\begin{equation}\label{eq:Iharasplit}
    Z(Y,u) = Z(X,u)L(X,\chi,u).
\end{equation}
Here, $\chi$ is the nontrivial character of the Galois group of $Y$ over $X$, and $L(X,\chi,u)$ is the corresponding {\it Artin--Ihara $L$-function}, which is rational as well \cite[Ch.~18]{ter}. More generally, as with the more number-theoretic zeta functions above, the zeta function of an arbitrary Galois cover factors as a product of Artin--Ihara $L$-functions \cite[Cor.~18.11]{ter}. 

\subsection{Categorification}

In this paper, we generalize the above formulas using objective linear algebra (see Section~\ref{sec:OLA} and Appendix~\ref{app:OLA} for an overview of this theory). Namely, for a double cover $\pi : Y\rightarrow X$ of an object $X$ (equal to $\Spec\orb_{F}$ for a number field $F$, an algebraic curve $C$ over an \emph{arbitrary} field, a finite CW complex or a finite graph), there is a formula for the zeta function (Dedekind zeta, point-counting zeta, Macdonald polynomial or Ihara zeta) of $Y$ as a product of $L$-functions: 
\begin{equation}\label{eq:genquad}
Z(Y) = Z(X)L(\pi). 
\end{equation}

The main goal of this paper is to use objective linear algebra to lift formula (\ref{eq:genquad}) to an equivalence of linear functors in the incidence algebra of an appropriate simplicial set $S(X)$ of ``effective $0$-cycles'' for $X$: 

\begin{thm}
\label{thm:introgen}
In the incidence algebra $I(S(X))$, there is an equivalence of linear functors 
$$
\pi_{*}\zeta_{Y} + \zeta_{X}*L(\pi)^{-} \cong \zeta_{X}*L(\pi)^{+}. 
$$
\end{thm}

Here, the choice of $S(X)$ depends on the object $X$: 
\begin{itemize}
    \item when $X = \Spec\orb_{F}$ for a number field $F$, $S(X)$ is the monoid of ideals of $\orb_{F}$ under multiplication;
    \item when $X$ is an algebraic curve, $S(X) = Z_{0}^{\eff}(X)$, the usual monoid of effective algebraic $0$-cycles on $X$ under addition;
    \item when $X$ is a finite CW complex, $S(X)$ is the simplicial complex $S_{\bullet}(X)$;
    \item when $X$ is a graph, $S(X)$ is the monoid of ``effective $0$-cycles" - similar to the case of curves, this is the free abelian monoid generated by the prime cycles (see Section~\ref{sec:graphs}).
\end{itemize}

In Theorem~\ref{thm:introgen}, the symbol $\pi_{*}$ is a pushforward map induced by the double cover $\pi : Y\rightarrow X$. The functors $L(\pi)^{+}$ and $L(\pi)^{-}$ in $\widetilde{I}(S(X))$ are defined in Section~\ref{sec:mainthm} in such a way that their images in the numerical incidence algebra satisfy $L(\pi)^{+} - L(\pi)^{-} = L(\pi)$, where $L(\pi)$ represents the coefficients of each of the ``$L$-functions'' in formulas (\ref{eq:quadzeta}), (\ref{eq:Lfactor}), (\ref{eq:Macdonaldsplit}) and (\ref{eq:Iharasplit}). Decategorifying in the right way recovers each of these formulas, as well as many others; see Remarks~\ref{rem:posetversion} and~\ref{rem:arbitrarydoublecovers}. 

Theorem~\ref{thm:introgen} also says that $L(\pi)$ is a {\it relative zeta function} for the cover $\pi$, in the terminology of \cite{ak}. Or to be more precise, it is a ``decomposed'' relative zeta function and becomes a relative zeta function after passing to the numerical incidence algebra $I_{\#}(S(X))$; see Remark~\ref{rem:card}. We plan to construct $L(\pi)$ directly as a single relative zeta function in a future article, using simplicial $G$-representations. See Appendix~\ref{app:Lpoly} for an overview of the construction in the case that $X$ is an elliptic curve. 

Finally, in Subsection~\ref{sec:motivic}, we give the following motivic analogue of Theorem~\ref{thm:introgen}. 

\begin{thm}[{Theorem~\ref{thm:motivicformula}}]
\label{thm:intromotivic}
For a double cover of curves $\pi : Y\rightarrow X$ over a field $k$, in the incidence algebra $I_{mot}(X)$ of the decomposition scheme $S^{\bullet}X$ associated to the monoid scheme $\coprod_{n\geq 0} \Sym^{n}X$, there is an equivalence of linear functors 
$$
\pi_{*}\zeta_{Y} + \zeta_{X}*L(\pi)^{-} \cong \zeta_{X}*L(\pi)^{+}. 
$$
\end{thm}

\subsection{Applications}

When $E$ is an elliptic curve over a finite field $\F_{q}$, the $L$-function of $E$ is of the form $L(E,t) = 1 - a_{q}t + qt^{2}$ where $a_{q} = q + 1 - \#E(\F_{q})$. Using Theorem~\ref{thm:introgen}, we deduce the formulas 
\begin{equation}\label{eq:aq}
a_{q} = i_{q}(E) - s_{q}(E) \quad\text{and}\quad \#\Sym^{n}C(\F_{q}) = \sum_{i + j = n} \left[(q^{i} + \ldots + q + 1)\sum_{\alpha\in\Sym^{j}\P^{1}(\F_{q})} \chi(\alpha)\right]
\end{equation}
where $s_{q}(E)$ and $i_{q}(E)$ are the number of ``split'' and ``inert'' points of the cover $E\rightarrow\P^{1}$ over $\F_{q}$ and $\chi$ is a certain function on the points of $\Sym^{j}\P^{1}$, which can be identified with the effective $0$-cycles of degree $j$ on $\P^{1}$. It is possible to deduce more complicated formulas for higher genus hyperelliptic curves in a similar fashion. 

Analogously, when $Y\xrightarrow{\pi} X$ is a Galois double cover of finite graphs, the ratio $\zeta_{Y}(u)/\zeta_{X}(u)$ of the graphs' Ihara zeta functions is an {\it Artin--Ihara $L$-function} $L(\pi,u) = \sum_{n = 0}^{\infty} f(n)u^{n}$ whose coefficients are determined by 
\begin{equation}\label{eq:loopcount}
f(n) = \sum_{\len(\alpha) = n} \chi(\alpha)
\end{equation}
where the sum is over all effective $0$-cycles $\alpha$ on $X$ and $\chi$ is a quadratic character on the primes of $X$ with values determined by the number of split and inert primes in the cover $\pi$. As with hyperelliptic curves, formula (\ref{eq:loopcount}) allows one to quickly compute loop counts in $Y$ from information stored on $X$. 

As a special case of this counting technique on graphs, we consider the graph $G_{p}(\ell)$ of supersingular $\ell$-isogenies, whose vertices are supersingular elliptic curves over $\overline{\F}_{p}$ and whose edges correspond to $\ell$-isogenies, for $\ell$ a prime distinct from $p$. Work of Ribet \cite{rib} shows that $G_{p}(\ell)$ admits a double cover by the dual graph $\widetilde{G}$ of the special fiber of a certain Shimura curve. We use formula (\ref{eq:loopcount}) to recover known asymptotics (cf.~\cite{aclsst}) for $G_{p}(\ell)$ and extend them to $\widetilde{G}$: 

\begin{thm}[{Theorem~\ref{thm:aclsst}, Corollary~\ref{cor:shimuraasymptotic}}]
\label{thm:shimuraasymptoticintro}
For distinct primes $p$ and $\ell$, let $G = G_{p}(\ell)$ be the graph of supersingular $\ell$-isogenies over $\overline{\F}_{p}$ and let $\widetilde{G} = \widetilde{G}_{p}(\ell)$ be the dual graph to the special fiber at $\ell$ of the Shimura curve $X_{0}^{p\ell}(1)$. When $p\equiv 1\pmod{12}$, the number of isogeny cycles of length $m$ in $G$ grows asymptotically as $\ell^{m}/2m$ and, if $m\geq 2$ is even, the number of prime cycles of length $m$ in $\widetilde{G}$ grows asymptotically as $\ell^{m}/m$. 
\end{thm}

\subsection{Relation to Previous Work}

This paper subsumes the authors' previous papers \cite{ak} and \cite{ak2} on quadratic number fields and hyperelliptic curves, respectively, and extends the techniques to new situations, notably Galois covers of finite graphs. 

\subsection{Organization}

The paper is organized as follows. In Section~\ref{sec:doublecov}, we recall basic properties of number fields, algebraic curves over finite fields, finite CW complexes and finite graphs, as well as each of their zeta and $L$-functions. In Section~\ref{sec:incalgs}, we review the definitions of decomposition sets and their incidence algebras, following \cite{gkt1,kob}. The proofs of Theorems~\ref{thm:introgen} and~\ref{thm:intromotivic} are given in Section~\ref{sec:mainthm}. In Section~\ref{sec:app} we prove formula (\ref{eq:aq}) relating the numbers of split and inert points of $E$ to $a_{q}(E)$ for an elliptic curve $E/\F_{q}$, as well as its graph theoretic analogue (\ref{eq:loopcount}) for a double cover of graphs $Y\rightarrow X$. Section~\ref{sec:ss} gives a brief overview of supersingular isogeny graphs and a proof of Theorem~\ref{thm:shimuraasymptoticintro}. Finally, in Section~\ref{sec:recip}, we describe quadratic reciprocity in the language of zeta functions of quadratic covers. 

The authors would like to thank their team members at the first Rethinking Number Theory (RNT) workshop - Karen Acquista, Changho Han and Alicia Lamarche - for their early contributions to this project and for many conversations along the way. The authors are grateful to the organizers of RNT - Heidi Goodson, Christelle Vincent and Mckenzie West - for the opportunity to begin this project, as well as to all the RNT participants. Finally, the second author would like to thank David Zureick-Brown for numerous suggestions that improved the article, Aly Deines for many productive discussions, and Bogdan Krstic for his help with the details of objective characteristic polynomials.

\section{Double Covers}
\label{sec:doublecov}

In this section, we survey four different types of finite covers: finite extensions of number fields, superelliptic curves (and more generally Galois covers of curves), topological covering spaces and Galois covers of graphs. In each setting, there is a natural notion of a zeta function which splits as a product of $L$-functions. We describe this splitting explicitly for double covers.

\subsection{Quadratic Number Fields}
\label{sec:quadnf}

Let $K/\Q$ be a number field with ring of integers $\orb_{K}$, norm map $N = N_{K/\Q}$ and Dedekind zeta function $\zeta_{K}(s)$. Then $\zeta_{K}(s)$ has the following product formula: 
$$
\zeta_{K}(s) = \prod_{\frak{p}} \frac{1}{1 - N(\frak{p})^{-s}}. 
$$
Here, the product is over all prime ideals $\frak{p}$. We denote the set of all ideals in $\orb_{K}$ by $I_{K}^{+}$ to distinguish it from $I_{K}$, which is commonly used for the group of fractional ideals. 

Now assume $K/F$ is an extension of number fields. For a fixed prime ideal $p\in I_{F}^{+}$ and a fixed prime ideal $\frak{p}\in I_{K}^{+}$ lying over $p$, let $e(\frak{p}\mid p)$ and $f(\frak{p}\mid p)$ denote the ramification index and inertia degree of $\frak{p}$, respectively. Recall that $p$ has one of the following splitting types: 
\begin{itemize}
    \item {\bf ramified} if $e(\frak{p}\mid p) > 1$ for some $\frak{p}$ lying over $p$;
    \item {\bf split} if $e(\frak{p}\mid p) = f(\frak{p}\mid p) = 1$ for all $\frak{p}$ lying over $p$;
    \item {\bf inert} if $p\orb_{K}$ is itself a prime ideal. 
\end{itemize}
When $F = \Q$ and the splitting types of all primes in the extension $K/\Q$ are known, one can factor $\zeta_{K}(s)$ as a Dirichlet series, that is, into local factors at each prime. This can also be done relatively, i.e.~in an extension $K/F$ with $F$ arbitrary. We state this for quadratic extensions below. 

\begin{prop}
\label{prop:quadsplitting}
Let $K/F$ be a quadratic extension of number fields. Then $\zeta_{K}(s)$ has the following prime factorization ``over $F$'': 
$$
\zeta_{K}(s) = \prod_{p\in I_{F}^{+}} \zeta_{K,p}(s)
$$
where
$$
\zeta_{K,p}(s) = \begin{cases}
    \ds\frac{1}{1 - N_{F/\Q}(p)^{-s}}, &\text{if $p$ is ramified}\\[1em]
    \ds\frac{1}{(1 - N_{F/\Q}(p)^{-s})^{2}}, &\text{if $p$ is split}\\[1em]
    \ds\frac{1}{1 - N_{F/\Q}(p)^{-2s}}, &\text{if $p$ is inert}.
\end{cases}
$$
\end{prop}

We can view each of these local factors as a generating function for ideals lying over $p$: 
\begin{align*}
    \zeta_{K,p}(s) &= \sum_{n = 0}^{\infty} \#\{\frak{a}\in I_{F}^{+} \mid N_{K/\Q}(\frak{a}) = N_{F/\Q}(p)^{n}\}p^{-ns}\\
        &= \begin{cases}
            1 + N_{F/\Q}(p)^{-s} + N_{F/\Q}(p)^{-2s} + \ldots, &\text{if $p$ is ramified}\\
            1 + 2N_{F/\Q}(p)^{-s} + 3N_{F/\Q}(p)^{-2s} + \ldots, &\text{if $p$ is split}\\
            1 + N_{F/\Q}(p)^{-2s} + N_{F/\Q}(p)^{-4s} + \ldots, &\text{if $p$ is inert}. 
        \end{cases}
\end{align*}

\begin{ex}
A well-known example is when $K = \Q(i)$. In this case, 
$$
p \text{ is } \begin{cases}
    &\text{ramified when } p = 2\\
    &\text{split when } p\equiv 1\!\pmod{4}\\
    &\text{inert when } p\equiv 3\!\pmod{4}. 
\end{cases}
$$
This means the zeta function of $\Q(i)$ can be written 
$$
\zeta_{\Q(i)}(s) = \frac{1}{1 - 2^{-s}}\times\prod_{p\equiv 1\,(4)} \frac{1}{(1 - p^{-s})^{2}}\times\prod_{p\equiv 3\,(4)} \frac{1}{1 - p^{-2s}}. 
$$
\end{ex}

\subsection{Hyperelliptic Curves}
\label{sec:hyperell}

The above description of zeta functions of quadratic extensions applies equally well in the setting of function fields. In this section, we describe the corresponding geometric picture, i.e.~using the point-counting zeta function of curves over a finite field.

For a smooth, projective, geometrically integral variety $X$ over $k = \F_{q}$, let $|X|$ denote the set of closed points of $X$. Then the zeta function of $X$ has the following product formula: 
$$
Z(X,t) = \prod_{x\in |X|} \frac{1}{1 - t^{\deg(x)}}
$$
where $\deg(x) = [k(x) : k]$ is the degree of $x$. We denote by $Z_{0}(X)$ the group of $0$-cycles on $X$, i.e.~the free abelian group generated by $|X|$. A $0$-cycle is {\it effective} if it is of the form $\alpha = \sum_{x\in |X|} a_{x}x$ for $a_{x}\geq 0$. The effective $0$-cycles on $X$ form a submonoid of $Z_{0}(X)$, denoted $Z_{0}^{\eff}(X)$. The zeta function of $X$ can also be written as a generating function for the effective $0$-cycles on $X$: 
$$
Z(X,t) = \sum_{\alpha\in Z_{0}^{\eff}(X)} t^{\deg(\alpha)}
$$
where $\deg : Z_{0}(X)\rightarrow\Z$ is the degree map sending $\alpha = \sum_{x} a_{x}x$ to $\deg(\alpha) = \sum_{x} a_{x}{\deg(x)}$.

\begin{rem}
By the number field/function field analogy, one can treat $\Spec\orb_{K}$ as an arithmetic curve and define $Z_{0}^{\eff}(\Spec\orb_{K})$ the same way, producing a monoid which is isomorphic to $I_{K}$. In fact, the effective $0$-cycles approach works for any scheme, so it should be considered the more natural choice. 
\end{rem}

Let $C$ be a hyperelliptic curve of genus $g\geq 1$, i.e.~a smooth, projective algebraic curve given by an equation of the form 
$$
C : y^{2} + h(x)y = f(x)
$$
for $f,h\in k[x]$ with $f$ monic, $\deg(f) = 2g + 1$ and $\deg(h)\leq g$. (If $\char k\not = 2$, we can take $h = 0$.) Restricting the projection $\P^{2}\rightarrow\P^{1}$ to $C$ determines a smooth map $\pi : C\rightarrow\P^{1}$ which is a degree $2$ cover ramified at $2g + 2$ points. 

\begin{rem}
Although it is not standard everywhere, in this paper we include elliptic curves in the class of hyperelliptic curves. 
\end{rem}

Fix a point $x\in |\P^{1}|$ and a point $y\in\pi^{-1}(x)$ on $C$. Let $e(y\mid x)$ denote the ramification index of $\pi$ at $y$ and let $f(y\mid x)$ denote the inertia degree, defined as $[k(y) : k(x)]$ where $k(x)$ (resp.~$k(y)$) is the field of definition of $x$ (resp.~$y$). Then $x$ has one of the following splitting types: 
\begin{itemize}
    \item {\bf ramified} if $e(y\mid x) > 1$ for any $y\in\pi^{-1}(x)$;
    \item {\bf split} if $e(y\mid x) = f(y\mid x) = 1$ for all $y\in\pi^{-1}(x)$;
    \item {\bf inert} if $\pi^{-1}(x)$ consists of a single closed point $y$ and $e(y\mid x) = 1$. 
\end{itemize}
In this situation, the Riemann--Hurwitz formula reads 
$$
2 - 2g = 4 - \sum_{y\in |C|} (e(y\mid x) - 1). 
$$
Compare this to the formulas in Section~\ref{sec:gencov}. 

\begin{prop}
\label{prop:quadsplitting}
Let $\pi : C\rightarrow\P^{1}$ be a hyperelliptic curve over $\F_{q}$. Then $Z(C,t)$ factors as 
$$
Z(C,t) = \prod_{x\in |\P^{1}|} Z_{x}(C,t)
$$
where
$$
Z_{x}(C,t) = \begin{cases}
    \ds\frac{1}{1 - t^{\deg(x)}}, &\text{if $x$ is ramified}\\[1em]
    \ds\frac{1}{(1 - t^{\deg(x)})^{2}}, &\text{if $x$ is split}\\[1em]
    \ds\frac{1}{1 - t^{2\deg(x)}}, &\text{if $x$ is inert}.
\end{cases}
$$
\end{prop}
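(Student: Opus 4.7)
The natural strategy is to rearrange the Euler product
$$
Z(C,t) = \prod_{y\in |C|} \frac{1}{1 - t^{\deg(y)}}
$$
by grouping the factors according to the map $\pi : C\rightarrow\P^{1}$. Since every closed point $y\in |C|$ lies above a unique closed point $x = \pi(y)\in |\P^{1}|$, the partition $|C| = \bigsqcup_{x\in |\P^{1}|} \pi^{-1}(x)$ turns the Euler product into $\prod_{x\in |\P^{1}|} Z_{x}(C,t)$ with
$$
Z_{x}(C,t) = \prod_{y\in\pi^{-1}(x)} \frac{1}{1 - t^{\deg(y)}}.
$$
So the task reduces to evaluating this local factor in each of the three splitting cases.

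The key inputs are the standard identities $\deg(y) = f(y\mid x)\deg(x)$ and
$$
\sum_{y\in\pi^{-1}(x)} e(y\mid x)\,f(y\mid x) = \deg(\pi) = 2,
$$
since $\pi$ is a finite flat degree $2$ cover. Because the right-hand side is $2$, there are only three ways to partition it: one summand equal to $2$ with $e=2,f=1$ (ramified); two summands each equal to $1$ with $e=f=1$ (split); or one summand equal to $2$ with $e=1,f=2$ (inert). In each case $\deg(y)$ is determined by $\deg(x)$, and plugging into the displayed formula for $Z_x(C,t)$ yields exactly the three expressions in the statement (with $t$ read as the local parameter $t^{\deg(x)}$, which is the standard convention for an Euler factor).

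Concretely, I would then write out each case in a short enumerated list: in the ramified case the unique preimage $y$ has $\deg(y) = \deg(x)$, so $Z_x = (1-t^{\deg(x)})^{-1}$; in the split case the two preimages both have $\deg(y) = \deg(x)$, so $Z_x = (1-t^{\deg(x)})^{-2}$; and in the inert case the unique preimage $y$ has $\deg(y) = 2\deg(x)$, so $Z_x = (1 - t^{2\deg(x)})^{-1}$. Finally, to justify that these three cases are exhaustive, I would observe that the trichotomy in the three bullet points preceding the proposition is precisely the enumeration of the partitions of $2$ above.

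There is no genuine obstacle here: the proof is essentially bookkeeping once one has the Euler product and the degree identity. The only mild care needed is to confirm that the trichotomy ramified/split/inert genuinely matches the three possibilities for $(e,f)$-data in a degree $2$ cover—in particular, that ``ramified'' forces $e = 2$ (and hence a single preimage with $f = 1$), which follows from $ef\leq 2$ together with $e > 1$.
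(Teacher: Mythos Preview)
Your proposal is correct and follows essentially the same approach as the paper: start from the Euler product over $|C|$, regroup over $|\P^{1}|$ via the fibres of $\pi$, and then read off each local factor from the definitions of ramified, split and inert. You spell out the $\sum_{y} e(y\mid x)f(y\mid x) = 2$ bookkeeping more explicitly than the paper (which simply says the description ``follows from the definitions''), but the argument is the same.
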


\begin{proof}
The factorization of $Z(C,t)$ as a product over the closed points of $C$ is well known: 
$$
Z(C,t) = \prod_{y\in |C|} \frac{1}{1 - t^{\deg(y)}}. 
$$
For each $x\in|\P^{1}|$, write 
$$
Z_{x}(C,t) = \prod_{y\in\pi^{-1}(x)} \frac{1}{1 - t^{\deg(y)}}. 
$$
Then the description of $Z_{x}(C,t)$ follows from the definitions of ramified, split and inert points. This will also follow from the objective description of $Z(C,t)$ as the pushforward $\pi_{*}\zeta_{C}$ (see Section~\ref{sec:app}). 
\end{proof}

\subsection{Topological Covers}
\label{sec:gencov}

By the function field/Riemann surface analogy, we can replace $C/\F_{q}$ with a complex algebraic curve $C/\C$, or equivalently with a Riemann surface $X = C^{\operatorname{an}}$. Then (ramified) covers of algebraic curves correspond to (branched) covers of Riemann surfaces. Everything that follows works in the more general setting of finite-sheeted branched covers of finite CW complexes, so for the rest of this section, assume $X$ and $Y$ are finite CW complexes. 

First, suppose $\pi : Y\rightarrow X$ is an unbranched cover of degree $n\geq 2$. Then the Euler characteristics of $X$ and $Y$ satisfy the following well-known relation: 
\begin{equation}\label{eq:topEuler}
\chi(Y) = n\chi(X). 
\end{equation}

If $\pi : Y\rightarrow X$ is a branched cover, the relation between $\chi(X)$ and $\chi(Y)$ is 
$$
\chi(Y) = n\chi(X) - r,
$$
where $r$ is the ramification number of the cover. Call the irreducible components of the branch locus $X_{1},\ldots,X_{r}\subset X_{br}$ and for each component $Z_{i}\subseteq\pi^{-1}(X_{i})$, let $e(Y_{i})$ be the ramification index of $X_{i}$. Setting $e(Z) = 1$ for each irreducible component $Z\subseteq Y,Z\not\subseteq\pi^{-1}(X_{1}\cup\cdots\cup X_{r})$, we have 
$$
\sum_{Z\subseteq\pi^{-1}(X')} e(Z) = n
$$
for \emph{any} irreducible component $X'\subseteq X$. Then the precise analogue of formula (\ref{eq:topEuler}) for branched covers is 
\begin{equation}\label{eq:branchedEuler}
\chi(Y) = n\chi(X) - \sum_{Z\subseteq Y} (e(Z) - 1). 
\end{equation}
That is, $r = \sum (e(Z) - 1)$.

\subsection{Galois Covers of Graphs}
\label{sec:graphs}

Let $X = (V,\vec{E},o,t,\iota)$ be a finite, connected (bidirected) graph, where $V$ is a set of vertices, $\vec{E}$ is a set of directed edges, $o,t \colon \vec{E} \to V$ are the ``origin'' and ``terminus'' functions and $\iota \colon \vec{E} \to \vec{E}$ is an involution\footnote{Classically, the involution is assumed to be fixed-point free, but we want to allow half-loops.} satisfying $o(\iota(e)) = t(e)$ and $t(\iota(e)) = o(e)$ for all $e \in \vec{E}$. We think of the pair $e$ and $\iota(e)$ as a single undirected edge connecting the vertices $o(e)$ and $t(e)$. Note that $e = \iota(e)$ is possible under this definition, in which case $o(e) = t(e)$ and $e$ is referred to as a half-loop. 

A path of length $\ell$ on $X$ is an ordered list $\gamma$ of edges $[e_1\cdots e_\ell]$ with $o(e_{i+1}) = t(e_i)$ for each $1\leq i\leq\ell$. We say the path is {\it closed} if $o(e_1) = t(e_\ell)$; {\it reduced} if $e_{i+1} \neq \iota(e_i)$ for all $i$ and $e_1 \neq \iota(e_\ell)$; and it is a power of another path if it is just a shorter list $[e_1 \dots e_{\ell_0}]$ repeated some integer number of times. We say two closed, reduced paths are equivalent if they are related under the equivalence relation generated by the relations $[e_1 \cdots e_\ell] \sim [e_2 \cdots e_\ell e_1]$. 

A \emph{prime} on $X$ is an equivalence class of closed, reduced paths which are not powers of another path. Note that the lengths of primes are well-defined, since each equivalence class only contains paths of a single length. In addition, for each prime $\gamma = [e_1\cdots e_\ell]$, there is an opposite prime of the same length, $\iota(\gamma) = [\iota(e_\ell)\cdots\iota(e_1)]$.

We can then define the {\it Ihara zeta function} of a graph $X$ in terms of the following product formula:
\begin{equation}
    Z(X,u) = \prod_{\gamma \text{ prime}} \dfrac{1}{1-u^{\len(\gamma)}}.
\end{equation}
In analogy with the point-counting zeta function for varieties over $\F_{q}$, the Ihara zeta function counts closed paths in $X$. Explicitly, 
$$
Z(X,u) = \exp\left [\sum_{\ell = 1}^{\infty} \frac{N_{X}(\ell)}{\ell}u^{\ell}\right ]
$$
where $N_{X}(\ell)$ denotes the number of closed paths in $X$ of length $\ell$ without backtracks or tails \cite[4.5]{ter}. 

The Ihara zeta function has an alternative description as a (reciprocal) characteristic polynomial 
$$
Z(X,u) = \frac{1}{\det(1 - uT_{X})},
$$
where $T_{X}$ is the {\it edge adjacency matrix} for $X$ \cite[Cor.~11.5]{ter}. In particular, $Z(X,u)$ is rational. 

Let $X = (V_X,\vec{E}_X,o,t,\iota)$ and $Y = (V_Y,\vec{E}_Y,o,y,\iota)$ be two graphs. A morphism $f \colon Y \to X$ is a pair of functions $f_E \colon \vec{E}_Y \to \vec{E}_X$ and $f_V \colon V_Y \to V_X$ satisfying the relations $o(f_E(e)) = f_V(o(e))$, $t(f_E(e)) = f_V(t(e))$, and $\iota(f_E(e)) = f_E(\iota(e))$ for all $e \in \vec{E}_Y$.

For each vertex $v \in V_Y$, let $E_Y^+(v) = \{e \in \vec{E}_Y \mid o(e) = v\}$. We say that a morphism $f \colon Y \to X$ is a {\it covering map} if $f_V$ is surjective and, for each $v \in V_Y$, $f_E$ induces an isomorphism $E_Y^+(v) \to E_X^+(f_V(v))$. If, in addition, the functions $f_E$ and $f_V$ are both $d$-to-$1$, we say that $f$ is a cover of degree $d$.

Given a prime $\gamma = [e_1\dots e_\ell]$ in $X$ and a degree $2$ covering map $f : Y\rightarrow X$, there are two possibilities for its splitting type: we say $\gamma$ is 
\begin{itemize}
    \item \textbf{split} if, for each representative $e_1\dots e_\ell$ of $\gamma$, there are two distinct closed, reduced paths of length $\ell$ $\tilde{e}_{1,1}\dots \tilde{e}_{\ell,1}$ and $\tilde{e}_{1,2}\dots \tilde{e}_{\ell,2}$ in $Y$ with $f_E(\tilde{e}_{i,j}) = e_i$ for each $i,j$;
    \item \textbf{inert} if, for each representative $e_1\dots e_\ell$ of $\gamma$ and each choice of an edge $\tilde{e}_{1,1}$ with $f_E(\tilde{e}_{1,1})$, there is a unique closed, reduced path of length $2\ell$, $\tilde{e}_{1,1}\dots\tilde{e}_{\ell,1}\tilde{e}_{1,2}\dots\tilde{e}_{\ell,2}$ in $Y$, with $f_E(\tilde{e}_{i,j}) = e_i$ for each $i,j$ and $\tilde{e}_{i,1} \neq \tilde{e}_{i,2}$ for each $i$. In particular, there will be no \emph{closed} paths in $Y$ that lift $e_1\dots e_\ell$, but two distinct closed paths in $Y$ which lift the closed path $e_1\dots e_\ell e_1 \dots e_\ell$: one starting with $\tilde{e}_{1,1}$ and one starting with $\tilde{e}_{1,2}$.
\end{itemize}
There is no ramification in this setting (although see Section~\ref{sec:future}). We can factor $Z(Y,u)$ into a product over the primes of $X$ as in Sections~\ref{sec:quadnf} and~\ref{sec:hyperell}. 

\begin{prop}
    Let $f \colon Y \to X$ be a double cover of graphs. Then $Z(Y,u)$ has the following prime factorization ``over $X$":
    \begin{equation}
        Z(Y,u) = \prod_{\gamma} Z_\gamma(Y,u)
    \end{equation}
    where the product is over all primes $\gamma$ in $X$ and 
    \begin{equation}
        Z_\gamma(Y,u) = \begin{cases}
            \dfrac{1}{\left(1-u^{\len(\gamma)}\right)^2}, & \text{if } \gamma \text{ is split} \\[1em]
            \dfrac{1}{1-u^{2\len(\gamma)}}, & \text{if } \gamma \text{ is inert.}
        \end{cases}
    \end{equation}
\end{prop}

When $f \colon Y \to X$ is a quadratic cover of graphs and $\gamma$ is a prime of $X$ that splits in $Y$, we label the two primes of $Y$ that lie over $\gamma$ as $\eta$ and $\overline{\eta}$. The formulas proven in Section~\ref{sec:mainthm} will not depend on this labeling, although their proofs will.

\section{Incidence Algebras and Objective Linear Algebra}
\label{sec:incalgs}

In this section, we define an incidence algebra of $0$-cycles in each of our four settings: number fields, varieties over $\F_{q}$, finite CW complexes and graphs. For most of these, this is accomplished by first attaching a monoid to each object, then invoking a general construction of incidence algebras for monoids, which is reviewed below. To integrate CW complexes, we attach a more general simplicial set, which then requires the more general theory of decomposition sets, objective linear algebra and objective incidence algebras. This theory is reviewed in Subsections~\ref{sec:decompset}, \ref{sec:OLA} and~\ref{sec:objinc}, following \cite{gkt1,kob}. 

\subsection{Incidence Algebras for Monoids}
\label{sec:monoids}

Fix a field $k$. We say a monoid $M$ is {\it locally finite} if for every $x\in M$, there are only finitely many ways of factoring $x = ab$ for $a,b\in M$. 

Following \cite{gkt1} and \cite{kob}, the incidence algebra of a locally finite monoid is defined by first defining its incidence coalgebra, then taking the dual. 

\begin{defn}
The {\bf incidence coalgebra} of a locally free monoid $M$ with unit $1$ is the free $k$-vector space 
$$
C(M) = \bigoplus_{x\in M} kx
$$
together with comultiplication and counit
\begin{align*}
    \Delta : C(M) &\longrightarrow C(M)\otimes_{k}C(M)\\
    x &\longmapsto \sum_{ab = x} a\otimes b\\
    \delta : C(M) &\longrightarrow k\\
      x &\longmapsto \delta_{x1}
%      \delta(x) = \begin{cases}
% 1 &, \quad \text{if } x = 1, \\
% 0 &, \quad \text{else.}
%\end{cases}
\end{align*}
where $\delta_{xy}$ is the Kronecker delta symbol. Dually, the {\bf incidence algebra} of $M$ is the $k$-algebra $I(M) = \Hom_{k}(C(M),k)$, with multiplication defined by extensions
\begin{align*}
    * : I(M)\otimes_{k}I(M) &\longrightarrow I(M)\\
    f\otimes g &\longmapsto \left (f*g : x\mapsto \sum_{ab = x} f(a)g(b)\right )
\end{align*}
(extended linearly) and unit $\delta\in I(M)$. 
\end{defn}

For a locally finite monoid $M$, the incidence algebra $I(M)$ is always associative and unital; however, it need not be commutative. 

There is a distinguished element $\zeta\in I(M)$, called the {\it zeta function} of $M$, defined by $\zeta(x) = 1$ for all $x\in M$. The principle of {\it M\"{o}bius inversion} \cite{rot,gkt2} says that $\zeta$ is an invertible element of $I(M)$, whose inverse is called the {\it M\"{o}bius function} and is defined by the following recursion: 
$$
\zeta^{-1} = \mu : x \longmapsto \begin{cases}
    1, & x = 1\\
    -\sum_{ab = x} \mu(a), & x\not = 1.
\end{cases}
$$

Perhaps unsurprisingly, the zeta functions in our different settings correspond to zeta elements in the incidence algebras of certain monoids, which we define in the next few sections. 

Alternatively, these incidence algebras can be constructed by attaching a poset to our objects and taking the {\it reduced incidence algebra} of this poset. By the general theory of decomposition sets, decalage and CULF functors, these two approaches yield the same incidence algebra; cf.~\cite{gkt1,gkt5,kob}. 

\subsection{The Incidence Algebra of a Number Field}
\label{sec:incalgfield}

\renewcommand{\P}{\mathbb{P}}

Let $K/\Q$ be a number field with ring of integers $\orb_{K}$. The set of ideals in $\orb_{K}$ forms a locally finite monoid under multiplication, written $I_{K}^{\times}$. When $K = \Q$, this is just the monoid of natural numbers $\N^{\times}$. Let $I(K) = I(I_{K}^{\times})$ be the incidence algebra of this monoid. 

For $K = \Q$, the incidence algebra $I(\Q) = I(\N^{\times})$ is isomorphic to the algebra of formal Dirichlet series: 
\begin{align*}
    I(\N^{\times}) &\xrightarrow{\;\sim\;} DS(k)\\
    f &\longmapsto \sum_{n = 1}^{\infty} \frac{f(n)}{n^{s}}
\end{align*}
and the zeta element $\zeta\in\widetilde{I}(\N,\mid)$ corresponds to the Riemann zeta function: 
$$
\zeta \longleftrightarrow \sum_{n = 1}^{\infty} \frac{1}{n^{s}}. 
$$

For a number field $K/\Q$, the field norm induces a map of monoids $N : I_{K}^{\times}\rightarrow \N^{\times}$. The resulting map $I(K)\rightarrow I(\Q)\cong DS(k)$ can be used to construct a Dirichlet series for any $f\in I(K)$: 
$$
f \longmapsto F(s) = \sum_{\frak{a}\in I_{K}^{\times}} \frac{f(\frak{a})}{N(\frak{a})^{s}}. 
$$
In particular, the zeta element $\zeta\in I(K)$ maps to the Dedekind zeta function of $K$ under this norm map: 
$$
\zeta \longmapsto \sum_{\frak{a}\in I_{K}^{\times}} \frac{1}{N(\frak{a})^{s}} = \sum_{n = 1}^{\infty} \frac{\#N^{-1}(n)}{n^{s}}. 
$$
More generally, for an extension of number fields $K/F$, there is a notion of pushforward on incidence algebras which lifts this Dirichlet series machine to the relative setting (see Example~\ref{ex:pushfwd}). 

\subsection{The Incidence Algebra of Effective $0$-Cycles for a Variety}
\label{sec:0cyc}

Next, we construct the incidence algebra of effective $0$-cycles for a variety $X$ over a field $F$. Let $Z_{0}^{\eff}(X)$ be the set of effective $0$-cycles on $X$. This has the structure of a locally finite monoid under addition, so we can apply the construction from Section~\ref{sec:monoids}. Fix a field of coefficients $k$ (usually $k = \C$ or $\Q_{\ell}$ for $\ell\nmid q$). The {\it incidence algebra of effective $0$-cycles} of $X$, hereafter the incidence algebra of $X$, is 
$$
I(X) := I(Z_{0}^{\eff}(X)) = \Hom_{k}(C(Z_{0}^{\eff}(X)),k),
$$
with convolution product  
$$
(f*g)(\alpha) = \sum_{\beta + \gamma = \alpha} f(\beta)g(\gamma). 
$$
The {\it zeta function} of $X$ is the element $\zeta\in I(X)$ defined by $\zeta(\alpha) = 1$ for every $\alpha\in Z_{0}^{\eff}(X)$. 

\begin{ex}
When $F = \F_{q}$ and $X = \Spec\F_{q}$, $Z_{0}^{\eff}(\Spec\F_{q})$ is isomorphic to the monoid $\N_{0}$ under addition and as a result, its incidence algebra is isomorphic to the ring of power series: 
$$
I(\Spec\F_{q}) \xrightarrow{\;\sim\;} k[[t]], \quad f \longmapsto \sum_{n = 0}^{\infty} f(n)t^{n}. 
$$
\end{ex}

More generally, for any variety $X/\F_{q}$, $I(X)$ is isomorphic to a completed tensor product of power series rings (see Appendix~\ref{app:tensor}): 
$$
I(X) \xrightarrow{\;\sim\;} \widehat{\bigotimes_{x\in |X|}} k[[t_{x}]], \quad f \longmapsto \bigotimes_{x\in |X|} \sum_{n = 0}^{\infty} f(nx)t_{x}^{n}. 
$$
Notice that under the map $\widehat{\bigotimes}_{x\in |X|} k[[t_{x}]]\rightarrow k[[t]]$ induced by $t_{x}\mapsto t^{\deg(x)}$, $\zeta\in I(X)$ is sent to the usual point-counting zeta function $Z(X,t)$: 
$$
\zeta = \bigotimes_{x\in |X|} \sum_{n = 0}^{\infty} t_{x}^{n} \longmapsto \prod_{x\in |X|} \sum_{n = 0}^{\infty} t^{n\deg(x)} = \prod_{x\in |X|} \frac{1}{1 - t^{\deg(x)}} = Z(X,t). 
$$
This mapping $\widehat{\bigotimes}_{x\in |X|} k[[t_{x}]] \rightarrow k[[t]]$, which constructs a generating function for every $f\in I(X)$, is another special case of a pushforward between incidence algebras (see Example~\ref{ex:pushfwd}). We will see that formula (\ref{eq:Lfactor}) is a statement about the pushforward of $\zeta\in\widetilde{I}(C)$ along a map $C\rightarrow\P^{1}$.

\subsection{The Incidence Algebra of Effective $0$-cycles for a Graph}
\label{sec:incalggraph}

Similarly, for a graph $X$, let $Z_0^{\text{eff}}(X) = \N_0 \{\text{primes of } X\}$ be the free commutative monoid generated by the primes of $X$. This is a locally finite monoid, so we can apply the construction from Section~\ref{sec:monoids}. The \emph{incidence algebra of effective $0$-cycles} of $X$, hereafter the incidence algebra of $X$, is
$$
I(X) := I(Z_{0}^{\eff}(X)) = \Hom_{k}(C(Z_{0}^{\eff}(X)),k),
$$
with convolution product  
$$
(f*g)(\alpha) = \sum_{\beta + \gamma = \alpha} f(\beta)g(\gamma). 
$$
The {\it zeta function} of $X$ is the element $\zeta\in I(X)$ defined by $\zeta(\alpha) = 1$ for every $\alpha\in Z_{0}^{\eff}(X)$. 

\begin{ex}
When $X = C_\ell$ is a cycle graph, meaning a connected graph with $\ell$ vertices, each connected to the next in a cycle by $\ell$ undirected edges, then $X$ has only two primes, which we will denote $\gamma$ and $\gamma^\iota$. Thus $Z_{0}^{\eff}(C_\ell)$ is isomorphic to the monoid $\N_{0} \times \N_0$ under addition and as a result, its incidence algebra is isomorphic to the ring of power series: 
$$
I(C_\ell) \xrightarrow{\;\sim\;} k[[t_1,t_2]], \quad f \longmapsto \sum_{a,b \geq 0} f(a\gamma + b\gamma^\iota)t_1^{a}t_2^{b}. 
$$
\end{ex}

More generally, $I(X)$ is a completed tensor product of power series rings, as in Section~\ref{sec:0cyc} (again, see Appendix~\ref{app:tensor}). Likewise, formula (\ref{eq:Iharasplit}) is a consequence of the pushforward construction of Example~\ref{ex:pushfwd} applied to a double cover $Y\rightarrow X$.

\subsection{Decomposition Sets}
\label{sec:decompset}

It is possible to construct incidence algebras for a far more general class of objects than monoids. For example, Leroux and others \cite{ler,cll,dur} construct incidence algebras for so-called M\"{o}bius categories, which include locally finite monoids (and posets) as a special case via their nerves. Other examples of naturally-occurring incidence algebras include: the Hopf algebra of rooted trees \cite{but,ck}, the Fa\`{a} di Bruno bialgebra \cite{joy,dur} and the Hall algebra of an exact category \cite{dk}. 

All of the above situations have been subsumed by the concept of a {\it decomposition space}, introduced and studied by G\'{a}lvez-Carrillo, Kock and Tonks in \cite{gkt1,gkt2,gkt3,gkt-hla,gkt5}. In this paper, we will only need the notion of a {\it decomposition set}, so in this section we will explain the relevant definitions and properties from \cite{gkt1} in the category of simplicial sets. See Appendix~\ref{app:OLA} for a brief summary of the general theory. 

Fix a field of coefficients $k$ and let $S$ be a simplicial set, with face maps $d_{i} : S_{n}\rightarrow S_{n - 1}$ and degeneracy maps $s_{i} : S_{n}\rightarrow S_{n + 1}$. 

\begin{defn}
Let $S : \Delta^{op}\rightarrow\cat{Set}$ be a simplicial set which is locally of finite length (cf.~\cite{gkt2} for a precise definition). The {\bf numerical incidence coalgebra} of $S$ is the free $k$-vector space $C(S)$ on the set $S_{1}$ of $1$-simplices of $S$, together with the comultiplication map 
\begin{align*}
    \Delta : C(S) &\longrightarrow C(S)\otimes C(S)\\
    x &\longmapsto \sum_{\substack{\sigma\in S_{2} \\ d_{1}\sigma = x}} d_{2}\sigma\otimes d_{0}\sigma
\end{align*}
and counit 
\begin{align*}
    \delta : C(S) &\longrightarrow k\\
    x &\longmapsto \begin{cases}
      1, &\text{if $x$ is degenerate}\\
      0, &\text{if $x$ is nondegenerate}.
    \end{cases}
\end{align*}
The {\bf numerical incidence algebra} of $S$ is the dual vector space $I(S) = \Hom_{k}(C(S),k)$ together with the multiplication map 
\begin{align*}
    I(S)\otimes I(S) &\longrightarrow I(S)\\
    f\otimes g &\longmapsto \left (f*g : x\mapsto \sum_{\substack{\sigma\in S_{2} \\ d_{1}\sigma = x}} f(d_{2}\sigma)g(d_{0}\sigma)\right )
\end{align*}
and unit $\delta$. 
\end{defn}

To distinguish it from the abstract incidence algebra to be defined below, we will sometimes denote the numerical incidence algebra of $S$ by $I_{\#}(S)$. 

For any simplicial set $S$, the {\it zeta function} of $S$ is the element $\zeta\in I(S)$ defined by $\zeta : x\mapsto 1$ for all $x\in S_{1}$ and extended linearly. In general, $I(S)$ is neither associative nor unital as a $k$-algebra. However, when $S$ is a {\it decomposition set}, $I(S)$ is an associative, unital $k$-algebra \cite[Sec.~5.3]{gkt1}. Following {\it loc.~cit.}, we say a morphism $g : [m]\rightarrow [n]$ in the category $\Delta$ is {\it active} if $g(0) = 0$ and $g(m) = n$, and we say $g$ is {\bf inert} if $g(i + 1) = g(i) + 1$ for all $0\leq i\leq m - 1$. 

\begin{defn}
A {\bf decomposition set} is a simplicial set $S : \Delta^{op}\rightarrow\cat{Set}$ that takes any pushout diagram in $\Delta$ of the form 
\begin{center}
\begin{tikzpicture}[scale=2]
  \node at (0,1) (a) {$[p]$};
  \node at (1,1) (b) {$[m]$};
  \node at (0,0) (c) {$[\ell]$};
  \node at (1,0) (d) {$[n]$};
  \draw[->] (b) -- (a);
  \draw[->] (c) -- (a);
  \draw[->] (d) -- (b) node[right,pos=.5] {$f$};
  \draw[->] (d) -- (c) node[above,pos=.5] {$g$};
  \draw (.2,.7) -- (.3,.7) -- (.3,.8);
\end{tikzpicture}
\end{center}
where $f$ is inert and $g$ is active, to a pullback diagram in $\cat{Set}$: 
\begin{center}
\begin{tikzpicture}[scale=2]
  \node at (0,1) (a) {$S_{p}$};
  \node at (1,1) (b) {$S_{m}$};
  \node at (0,0) (c) {$S_{\ell}$};
  \node at (1,0) (d) {$S_{n}$};
  \draw[->] (a) -- (b);
  \draw[->] (a) -- (c);
  \draw[->] (b) -- (d) node[right,pos=.5] {$f^{*}$};
  \draw[->] (c) -- (d) node[above,pos=.5] {$g^{*}$};
  \draw (.2,.7) -- (.3,.7) -- (.3,.8);
\end{tikzpicture}
\end{center}
\end{defn}

Note that locally finite monoids are examples of {\it M\"{o}bius categories}, which are special types of decomposition sets. Therefore all incidence algebras constructed so far are associative and unital.

\subsection{Objective Linear Algebra}
\label{sec:OLA}

Informally, objective linear algebra is ``linear algebra with sets''. In lieu of a full description of the theory, which already appears in \cite{gkt-hla,kob,ak}, here is a dictionary of some linear algebra terms and their objective counterparts. 

\begin{center}
\begin{tabular}{|c|c|}
    \hline
    {\bf Linear} & {\bf Objective}\\
    \hline\hline
    field of scalars $k$ & the category $\cat{Set}$\\
    scalar addition $+$ & coproduct $\coprod$\\
    scalar multiplication & product $\times$\\
    a basis $B$ & a set $B$\\
    a vector $v$ in the basis $B$ & a set map $v : X\rightarrow B$\\
    the vector space with basis $B$ & the slice category $\cat{Set}_{/B}$\\
    vector addition $v + w$ & coproduct $v\amalg w : X\coprod Y\rightarrow B$\\
    scalar multiplication $av$ & $A\times (v : X\rightarrow B) := (A\times X\xrightarrow{\operatorname{id}\times v} A\times B\xrightarrow{\proj_{B}}B)$\\
    a matrix $M$ & a span $\roofx{B}{C}{M}{v}{w}$\\
    the linear map with matrix $M$ & the linear functor $w_{!}v^{*} : \cat{Set}_{/B}\rightarrow\cat{Set}_{/C}$\\
    matrix multiplication & span composition\\
    tensor product $V\otimes W$ & symmetric tensor $\cat{Set}_{/B}\otimes\cat{Set}_{/C} := \cat{Set}_{/B\times C}$\\
    dual space $V^{*} = \Hom(V,k)$ & functor space $(\cat{Set}_{/B})^{*} := \Fun(\cat{Set}_{/B},\cat{Set})$\\
    \hline
\end{tabular}
\end{center}

Here, for any span (as pictured), the composition $w_{!}v^{*}$ in the definition of linear functor consists of the pullback functor $v^{*} : \cat{Set}_{/B}\rightarrow\cat{Set}_{/M},(X\rightarrow B)\mapsto (X\times_{B}M\rightarrow M)$ and the pushforward functor $w_{!} : \cat{Set}_{/M}\rightarrow\cat{Set}_{/C},(Y\rightarrow M)\mapsto (Y\rightarrow M\xrightarrow{w}C)$. 

To recover the objects in the left column of the table, one can often apply the cardinality functor to objects in the right column. For example, if $v : X\rightarrow B$ is an objective vector with finite fibres, then the collection of its fibre cardinalities defines a vector $\sum_{b\in B} |v^{-1}(b)|b$ in the $k$-vector space spanned by $B$. 

\subsection{The Objective Incidence Algebra}
\label{sec:objinc}

Abstract incidence algebras are defined at the objective level for any {\it locally finite decomposition set} in \cite{gkt1}. For such a decomposition set $S$, the incidence algebra $I(S)$ is an associative, unital monoid object in the objective linear algebra category $\cat{LIN}$. In particular, in each of our four settings the associated simplicial set is a locally finite decomposition set and therefore admits an objective incidence algebra. 

\begin{defn}
For a simplicial set $S$, the {\bf incidence coalgebra} of $S$ is the objective vector space $C(S) = \cat{Set}_{/S_{1}}$ equipped with a  comultiplication linear functor $\Delta : C(S)\rightarrow C(S)\otimes C(S)$, represented by the span 
$$
\Delta = \left (\roofx{S_{1}}{S_{1}\times S_{1}}{S_{2}}{d_{1}}{(d_{2},d_{0})}\right ), 
$$
as well as a counit linear functor $\delta : C(S)\rightarrow\cat{Set}$, represented by the span 
$$
\delta = \left (\roofx{S_{1}}{*}{S_{0}}{s_{0}}{}\right ). 
$$
The {\bf incidence algebra} of $S$ is the dual objective vector space $I(S) := C(S)^{*} = \Fun(\cat{Set}_{/S_{1}},\cat{Set})$, equipped with unit $\delta$ and a multiplication linear functor $m : I(S)\otimes I(S)\rightarrow I(S)$ defined as follows. For $f,g\in I(S)$, their product $m(f,g)$ is the composition $m(f,g) : \cat{Set}_{/S_{1}}\xrightarrow{\Delta}\cat{Set}_{/S_{1}}\otimes\cat{Set}_{/S_{1}}\xrightarrow{f\otimes g}\cat{Set}\otimes\cat{Set}\xrightarrow{\sim}\cat{Set}$. This may also be represented by the outer span in the diagram 
$$
m(f,g) = \left ( \tikz[xscale=3,yscale=1.7,baseline=30]{
        \node at (0,0) (a) {$S_{1}$};
        \node at (1,0) (b) {$S_{1}\times S_{1}$};
        \node at (2,0) (c) {$*$};
        \node at (.5,.8) (x) {$S_{2}$};
        \node at (1.5,.8) (y) {$S_{1}\times S_{1}$};
        \node at (1,1.6) (z) {$P$};
        \draw[->] (z) -- (x);
        \draw[->] (z) -- (y);
        \draw[->] (x) -- (a) node[left,pos=.3] {$d_{1}$};
        \draw[->] (x) -- (b) node[right,pos=.1] {$(d_{2},d_{0})$};
        \draw[->] (y) -- (b) node[right,pos=.8] {$f\times g$};
        \draw[->] (y) -- (c);
    }\right )
$$
where the top square is a pullback square. 
\end{defn}

When $S$ is a decomposition set, \cite[Thm.~7.4]{gkt1} shows that $I(S)$ is an associative, unital monoid object in the category of objective vector spaces and linear functors; that is, $I(S)$ is an objective algebra over $\cat{Set}$. The span 
$$
\zeta = \left (\roofx{S_{1}}{*}{S_{1}}{id}{}\right )
$$
defines a linear functor $\zeta\in I(S)$, called the {\it zeta functor} of $S$. 

\begin{ex}
\label{ex:pushfwd}
One important construction we will need in later proofs is the notion of pushforward between incidence algebras. Suppose $S$ and $T$ are decomposition sets and $F : T_{1}\rightarrow S_{1}$ is a set map on their $1$-simplices. This induces a pushforward map between their incidence algebras $F_{*} : I(T)\rightarrow I(S)$ defined on spans as follows: 
$$
\varphi = \left (\roofx{T_{1}}{*}{M}{f}{}\right ) \quad \longmapsto \quad F_{*}\varphi = \left (\roofx{S_{1}}{*}{M}{F\circ f}{}\right ). 
$$
If $F : T_{1}\rightarrow S_{1}$ is of the form $F = G_{1}$ for a simplicial map $G : T\rightarrow S$, we will write $F_{*}$ or $G_{*}$ to mean this pushforward of spans. 
\end{ex}

\subsection{The Incidence Algebra for a CW Complex}
\label{sec:incalgCW}

For a CW complex $X$, let $S_{\bullet}(X)$ be the associated simplicial complex, which is a decomposition set by construction. We define the incidence algebra for $X$ to be incidence algebra of $S_{\bullet}(X)$. The numerical incidence algebra in this case is the $k$-algebra $I_{\#}(X) = I_{\#}(S_{\bullet}(X)) = \Hom_{k}(k[S_{1}(X)],k)$, with convolution product 
$$
(f*g)(\alpha) = \sum_{\substack{\sigma\in S_{2}(X) \\ d_{1}\sigma = \alpha}} f(d_{2}\sigma)g(d_{0}\sigma). 
$$
Here, $d_{0},d_{1},d_{2} : S_{2}(X)\rightarrow S_{1}(X)$ are the face maps which pick off the faces of a $2$-simplex $\sigma$ as follows: 
$$
\tikz[baseline=5]{
    \draw[thick] (-.7,0) -- (0,1) node[left,pos=.6] {$\beta$};
    \draw[thick] (0,1) -- (.7,0) node[right,pos=.4] {$\gamma$};
    \draw[thick] (.7,0) -- (-.7,0) node[below,pos=.5] {$\alpha$};
    \node at (0,.4) {\large $\sigma$};
    \node at (-.9,-.1) {0};
    \node at (0,1.3) {1};
    \node at (.9,-.1) {2};
} \qquad\qquad \begin{matrix}
    & d_{0} : \sigma \longmapsto \gamma\\
    & d_{1} : \sigma \longmapsto \alpha\\
    & d_{2} : \sigma \longmapsto \beta.\\
    &
\end{matrix}
$$
As usual, write $\zeta_{X}$ for the {\it zeta functor} and $|\zeta_{X}|$ for its decategorification in $I_{\#}(X)$. 

\begin{ex}
For $X = *$, the incidence algebra $I(*)$ is isomorphic to $\cat{Set}$, while the numerical incidence algebra is isomorphic to the ring of power series $k[[t]]$ and the zeta function $|\zeta_{*}|\in I_{\#}(*)$ corresponds to the geometric power series $(1 - t)^{-1}\in k[[t]]$. Notice that in this case, 
$$
|\zeta_{*}| = (1 - t)^{-\chi(*)}. 
$$
\end{ex}

More generally, the zeta functor of a CW-complex $X$ pushes forward along the terminal map $t : X\rightarrow *$ to an objective linear functor $t_{*}\zeta_{X}\in I(*)$ whose decategorification $|t_{*}\zeta_{X}|\in I_{\#}(*)$ is the Macdonald polynomial $(1 - t)^{-\chi(X)}$. For a covering map $\pi : Y\rightarrow X$, we will see that formula (\ref{eq:Macdonaldsplit}) is a consequence of a splitting of $\pi_{*}\zeta_{Y}$ in $I(X)$.

\section{Proof of the Main Theorem}
\label{sec:mainthm}

For a pair of objects $X$ and $Y$, let $S$ and $T$ denote their respective incidence algebras, as defined in Section~\ref{sec:incalgs}. Fix a quadratic cover $\pi : Y\rightarrow X$, interpreted appropriately in each of our four settings, and let $\pi_{*} : I(Y)\rightarrow I(X)$ be the pushforward on incidence algebras induced by $\pi$. 

\subsection{Local Theorem}
\label{sec:mainthmloc}

For this section, fix a ``point'' $x$ in $X$, interpreted as:
\begin{itemize}
    \item a prime ideal in $\orb_{F}$ when $X = \Spec\orb_{F}$;
    \item a closed point when $X$ is an algebraic curve over a field;
    \item a point when $X$ is a finite CW complex;
    \item a prime loop when $X$ is a graph. 
\end{itemize}
If $S = S(X)$ is the simplicial set attached to $X$, let $S_{x}$ be the simplicial subset ``supported at $x$'' , meaning: 
\begin{itemize}
    \item when $X = \Spec\orb_{F}$, $S$ consists of simplices of powers of $x$;
    \item when $X$ is an algebraic curve, $S$ consists of simplices of effective $0$-cycles supported at $x$;
    \item when $X$ is a finite CW complex, $S$ consists of simplices containing $x$;
    \item when $X$ is a graph, $S$ consists of equivalence classes of powers of $x$. 
\end{itemize}

Fix a quadratic cover $\pi : Y\rightarrow X$ and let $T_{x}$ be the simplicial subset of $T$ consisting of simplices with images in $S_{x}$. Also let $\zeta_{X,x}$ and $\zeta_{Y,x}$ be the zeta functions in $I(S_{x})$ and $I(T_{x})$, respectively. 

\begin{thm}[{Local Version of Theorem~\ref{thm:introgen}}]
\label{thm:mainthmloc}
For each point $x$ in $X$, in the incidence algebra $I(S_{x})$, there is an equivalence of linear functors 
\begin{equation}\label{eq:local}
\pi_{*}\zeta_{Y,x} + \zeta_{X,x}*L_{x}(\pi)^{-} \cong \zeta_{X,x}*L_{x}(\pi)^{+}. 
\end{equation}
\end{thm}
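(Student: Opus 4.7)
The approach is to reduce to a case analysis on the splitting type of $x$ (ramified, split, or inert, as in Proposition~\ref{prop:quadsplitting}), define $L_{x}(C)^{\pm}$ as explicit linear functors in $I(S_{x})$ in each case, and exhibit the equivalence as an isomorphism of spans.

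First I would unpack $T_{x}$ and the induced pushforward $\pi : T_{x} \to S_{x}$ in the three cases. For ramified $x$ (one point $y$ with $e = 2$, $f = 1$), $T_{x} \cong \N$ and $\pi(ay) = ax$ is a bijection. For split $x$ (two points $y,\bar{y}$ each with $e = f = 1$), $T_{x} \cong \N^{2}$ and $\pi(ay + b\bar{y}) = (a+b)x$. For inert $x$ (one point $y$ with $e = 1$, $f = 2$), $T_{x} \cong \N$ but $\pi(ay) = 2ax$.

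Guided by the decategorified local formulas $L_{x}(C,t_{x}) = Z_{x}(C,t_{x})/Z_{x}(\P^{1},t_{x})$, which equal $1$, $1/(1-t_{x})$, and $1/(1+t_{x})$ respectively, I would define: $L_{x}(C)^{+} = \delta$ and $L_{x}(C)^{-} = \emptyset$ (ramified); $L_{x}(C)^{+} = \zeta_{\P^{1},x}$ and $L_{x}(C)^{-} = \emptyset$ (split); and, for inert $x$, $L_{x}(C)^{\pm}$ the spans over $S_{x}$ represented by the inclusions of even multiples $\{2kx : k \geq 0\}$ and positive odd multiples $\{(2k+1)x : k \geq 0\}$, corresponding to the natural decomposition $1/(1+t_{x}) = 1/(1-t_{x}^{2}) - t_{x}/(1-t_{x}^{2})$. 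A quick decategorification check confirms that in each case these realize the right local $L$-factor.

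The ramified case is then immediate from the pushforward bijection $T_{x} \xrightarrow{\sim} S_{x}$, and the split case follows by identifying $\{(a,b) \in \N^{2} : a + b = n\}$ both with the fibre of $\pi_{*}\zeta_{C,x}$ over $nx$ (using the chosen labeling of $\pi^{-1}(x)$) and with the fibre of $\zeta_{\P^{1},x} * \zeta_{\P^{1},x}$. The main obstacle is the inert case, where both $L_{x}(C)^{+}$ and $L_{x}(C)^{-}$ contribute nontrivially. Over the interval $[0,nx]$, the fibre of $\zeta_{\P^{1},x} * L_{x}(C)^{+}$ is the set of pairs $((n-2j)x, 2jx)$ with $0 \leq 2j \leq n$, while the fibre of $\pi_{*}\zeta_{C,x} + \zeta_{\P^{1},x} * L_{x}(C)^{-}$ is the set of pairs $((n-2j-1)x, (2j+1)x)$ with $0 \leq 2j+1 \leq n$, together with the single point $\tfrac{n}{2}y$ when $n$ is even. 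I would construct the desired bijection by a uniform shift sending each odd-indexed pair to an adjacent even-indexed pair, matching the leftover even pair with $\tfrac{n}{2}y$ in the even case, and then verify that this assembly is natural in the interval and thus descends to an equivalence of linear functors in $I(S_{x})$.
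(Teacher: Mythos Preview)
Your proposal is correct and follows essentially the same approach as the paper: the same case analysis on the splitting type of $x$, the same definitions of $L_{x}(C)^{\pm}$ (identity/$\delta$ in the ramified case, $\zeta_{\P^{1},x}$ in the split case, even/odd multiples in the inert case), and the same explicit bijections of fibres, including the shift $(k,(2\ell+1)x)\mapsto((k+1),2\ell x)$ together with $ay\mapsto(0x,2ax)$ in the inert case. The only cosmetic difference is that the paper phrases everything directly as an isomorphism of spans rather than fibrewise, so your closing remark about checking naturality in the interval is unnecessary once you write the bijection as a single map $T_{x,1}\amalg B^{-}\to B^{+}$ over $S_{x,1}$.
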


\begin{proof}
The terms in formula (\ref{eq:local}) are defined as follows. First, $\pi_{*}\zeta_{Y,x},L_{x}(\pi)^{+}$ and $L_{x}(\pi)^{-}$ are represented by the spans 
$$
\pi_{*}\zeta_{Y,x} = \left (\roofx{S_{x,1}}{*}{T_{x,1}}{\pi_{1}}{}\right ), \quad L_{x}(\pi)^{+} = \left (\roofx{S_{x,1}}{*}{S_{x,1}^{+}}{j_{x}^{+}}{}\right ), \quad L(\pi)^{-} = \left (\roofx{S_{x,1}}{*}{S_{x,1}^{-}}{j_{x}^{-}}{}\right )
$$
where $S_{x,1}^{+},S_{x,1}^{-},j_{x}^{+}$ and $j_{x}^{-}$ are each defined below. Then the convolutions in formula (\ref{eq:local}) are given by the span convolutions 
$$
\zeta_{X,x}*L_{x}(\pi)^{+} = \left ( \tikz[xscale=3,yscale=1.7,baseline=30]{
        \node at (0,0) (a) {$S_{x,1}$};
        \node at (1,0) (b) {$S_{x,1}\times S_{x,1}$};
        \node at (2,0) (c) {$*$};
        \node at (.5,.8) (x) {$S_{x,2}$};
        \node at (1.5,.8) (y) {$S_{x,1}\times S_{x,1}^{+}$};
        \node at (1,1.6) (z) {$B^{+}$};
        \draw[->] (z) -- (x) node[above,pos=.5] {$\alpha^{+}$};
        \draw[->] (z) -- (y);
        \draw[->] (x) -- (a) node[left,pos=.3] {$d_{1}$};
        \draw[->] (x) -- (b) node[right,pos=.1] {$(d_{2},d_{0})$};
        \draw[->] (y) -- (b) node[right,pos=.8] {$id\times j_{x}^{+}$};
        \draw[->] (y) -- (c);
    }\right )
$$
and
$$
\zeta_{X,x}*L_{x}(\pi)^{-} = \left ( \tikz[xscale=3,yscale=1.7,baseline=30]{
        \node at (0,0) (a) {$S_{x,1}$};
        \node at (1,0) (b) {$S_{x,1}\times S_{x,1}$};
        \node at (2,0) (c) {$*$};
        \node at (.5,.8) (x) {$S_{x,2}$};
        \node at (1.5,.8) (y) {$S_{x,1}\times S_{x,1}^{-}$};
        \node at (1,1.6) (z) {$B^{-}$};
        \draw[->] (z) -- (x) node[above,pos=.5] {$\alpha^{-}$};
        \draw[->] (z) -- (y);
        \draw[->] (x) -- (a) node[left,pos=.3] {$d_{1}$};
        \draw[->] (x) -- (b) node[right,pos=.1] {$(d_{2},d_{0})$};
        \draw[->] (y) -- (b) node[right,pos=.8] {$id\times j_{x}^{-}$};
        \draw[->] (y) -- (c);
    }\right ). 
$$
From here, the proof divides into cases based on the splitting type of the point $x$. 

First assume $x$ is a branch point of the covering $\pi$ and let $y$ be its (necessarily unique) preimage. In this case, $S_{x,1}$ has a unique initial element $\eta$, e.g.~when $X$ is an algebraic curve, $\eta$ is the trivial $0$-cycle $0 = 0x$, which we will write multiplicatively here: $\eta = x^{0}$. Take $j_{x}^{+} : S_{x,1}^{+}\hookrightarrow S_{x,1}$ to be the inclusion of the subset $\{\eta\}$. Also let $S_{x,1}^{-} = \varnothing$. Then formula (\ref{eq:local}) becomes 
$$
\pi_{*}\zeta_{Y,x} \cong \zeta_{X,x}*L_{x}(\pi)^{+}
$$
which is verified by constructing an equivalence of spans 
\begin{center}
\begin{tikzpicture}[scale=1.3]
  \node at (-1,0) (a) {$S_{x,1}$};
  \node at (0,1) (b) {$B^{+}$};
  \node at (0,-1) (c) {$T_{x,1}$};
  \node at (1,0) (d) {$*$};
  \draw[->] (b) -- (a) node[left,pos=.3] {$d_{1}\circ\alpha^{+}$};
  \draw[->] (b) -- (d);
  \draw[->] (c) -- (a) node[left,pos=.4] {$\pi_{1}$};
  \draw[->] (c) -- (d);
  \draw[->] (c) -- (b) node[right,pos=.5] {$\varphi$};
\end{tikzpicture}. 
\end{center}
Note that $B^{+}$ can be described explicitly: 
$$
B^{+} = \{\sigma\in S_{x,2} \mid d_{0}\sigma = \eta\}. 
$$
Define $\varphi$ in the above diagram by 
$$
\varphi(y^{k}) = \tikz[baseline=5]{
    \draw[thick] (-.7,0) -- (0,1) node[left,pos=.6] {$x^{k}$};
    \draw[thick] (0,1) -- (.7,0) node[right,pos=.4] {$\eta$};
    \draw[thick] (.7,0) -- (-.7,0) node[below,pos=.5] {$x^{k}$};
    \node at (0,.4) {\large $\sigma$};
}.
$$
Since every $2$-simplex in $B^{+}$ has this form, $\varphi$ is invertible (send such a $\sigma$ back to $y^{k}\in T_{x,1}$). The diagram commutes by construction, proving formula (\ref{eq:local}) in the ramified case. 

Next, suppose $x$ splits in the covering $\pi$, with fibre $\pi^{-1}(x) = \{y,\bar{y}\}$. As in the ramified case, we take $S_{x,1}^{-} = \varnothing$, but set $S_{x,1}^{+} = S_{x,1}$ so that $B^{+} = S_{x,2}$ and $d_{1}\circ\alpha = d_{1}$. To compare the linear functors $\pi_{*}\zeta_{Y,x}$ and $\zeta_{X,x}*L_{x}(\pi)^{+}$, we once again construct a map $\varphi : T_{x,1}\rightarrow B^{+}$ that makes the appropriate diagram commute. Define $\varphi$ by 
$$
\varphi(ay + b\bar{y}) = \tikz[baseline=5]{
    \draw[thick] (-.7,0) -- (0,1) node[left,pos=.6] {$x^{a}$};
    \draw[thick] (0,1) -- (.7,0) node[right,pos=.4] {$x^{b}$};
    \draw[thick] (.7,0) -- (-.7,0) node[below,pos=.5] {$x^{a + b}$};
    \node at (0,.4) {\large $\sigma$};
}.
$$
Every $\sigma\in B^{+} = S_{x,2}$ has this form, so sending $\sigma$ with $d_{2}\sigma = x^{a}$ and $d_{0}\sigma = x^{b}$ to $y^{a}\bar{y}^{b}\in T_{x,1}$ gives an inverse to $\varphi$. In addition, 
$$
d_{1}\circ\varphi(y^{a}\bar{y}^{b}) = d_{1}\left (\tikz[baseline=5]{
    \draw[thick] (-.7,0) -- (0,1) node[left,pos=.6] {$x^{a}$};
    \draw[thick] (0,1) -- (.7,0) node[right,pos=.4] {$x^{b}$};
    \draw[thick] (.7,0) -- (-.7,0) node[below,pos=.5] {$x^{a + b}$};
    \node at (0,.4) {\large $\sigma$};
}\right ) = x^{a + b} = \pi_{1}(y^{a}\bar{y}^{b})
$$
so the diagram commutes, proving formula (\ref{eq:local}) in the split case. 

Finally, suppose $x$ is inert, which doesn't occur in the case of a CW complex. Here, we take $S_{x,1}^{+}$ and $S_{x,1}^{-}$ to be the even and odd degree divisors in $S_{x,1}$, 
$$
S_{x,1}^{+} = \{x^{2k} \mid k\geq 0\} \quad\text{and}\quad S_{x,1}^{-} = \{x^{2k + 1} \mid k\geq 0\}, 
$$
together with their natural inclusions $j^{\pm} : S_{x,1}^{\pm} \hookrightarrow S_{x,1}$. Then the equivalence $\pi_{*}\zeta_{Y,x} + \zeta_{X,x}*L_{x}(\pi)^{-} \cong L_{x}(\pi)^{+}$ is verified by constructing an isomorphism $\varphi$ in the diagram 
\begin{center}
\begin{tikzpicture}[scale=1.3]
  \node at (-1,0) (a) {$S_{x,1}$};
  \node at (0,1) (b) {$B^{+}$};
  \node at (0,-1) (c) {$T_{x,1}\amalg B^{-}$};
  \node at (1,0) (d) {$*$};
  \draw[->] (b) -- (a) node[left,pos=.3] {$d_{1}\circ\alpha^{+}$};
  \draw[->] (b) -- (d);
  \draw[->] (c) -- (a) node[left,pos=.4] {$\pi_{1}\sqcup d_{1}$};
  \draw[->] (c) -- (d);
  \draw[->] (c) -- (b) node[right,pos=.5] {$\varphi$};
\end{tikzpicture}.
\end{center}
This time, $B^{+}$ and $B^{-}$ have the following descriptions: $$
B^{+} = \{\sigma\in S_{x,2} \mid d_{0}\sigma = x^{2k},k\geq 0\} \quad\text{and}\quad B^{-} = \{\sigma\in S_{x,2} \mid d_{2}\sigma = x^{2k + 1},k\geq 0\}. 
$$
We define $\varphi$ on $T_{x,1}$ by 
$$
\varphi(y^{k}) = \tikz[baseline=5]{
    \draw[thick] (-.7,0) -- (0,1) node[left,pos=.6] {$\eta$};
    \draw[thick] (0,1) -- (.7,0) node[right,pos=.4] {$x^{2k}$};
    \draw[thick] (.7,0) -- (-.7,0) node[below,pos=.5] {$x^{2k}$};
    \node at (0,.4) {\large $\sigma$};
}
$$
and on $B^{-}$ by 
$$
\varphi\left (\tikz[baseline=5]{
    \draw[thick] (-.7,0) -- (0,1) node[left,pos=.6] {$x^{k}$};
    \draw[thick] (0,1) -- (.7,0) node[right,pos=.4] {$x^{2\ell + 1}$};
    \draw[thick] (.7,0) -- (-.7,0) node[below,pos=.5] {$x^{k + 2\ell + 1}$};
    \node at (0,.4) {\large $\sigma$};
}\right ) = \tikz[baseline=5]{
    \draw[thick] (-.7,0) -- (0,1) node[left,pos=.6] {$x^{k + 1}$};
    \draw[thick] (0,1) -- (.7,0) node[right,pos=.4] {$x^{2\ell}$};
    \draw[thick] (.7,0) -- (-.7,0) node[below,pos=.5] {$x^{k + 2\ell + 1}$};
    \node at (0,.4) {\large $\tau$};
}.
$$
It is routine to check $\varphi$ is a bijection (cf.~\cite[Thm.~4.2]{ak}), which proves formula (\ref{eq:local}) in all cases. 
\end{proof}

Passing to the numerical incidence algebra $I_{\#}(S_{x})$ and pushing forward to $I_{\#}(*) \cong k[[t]]$ recovers a local version of formula (\ref{eq:genquad}). For example, when $X = \P^{1}$ and $Y = C$ is a hyperelliptic curve over $\F_{q}$, the objective formula pushes forward to 
\begin{equation}\label{eq:quadzetalocal}
Z_{x}(C,t) = Z_{x}(\P^{1},t)(L_{x}(C,t)^{+} - L_{x}(C,t)^{-}) = Z_{x}(\P^{1},t)L_{x}(C,t)
\end{equation}
where $L_{x}(C,t)$ is the local factor of the classical $L$-function of $C$, namely 
$$
L_{x}(C,t) = \frac{1}{1 - \chi(F_{x})t^{\deg(x)}}
$$
where $\chi$ is the quadratic character associated to the extension $\F_{q}(C)/\F_{q}(t)$ and $F_{x}$ is the Frobenius at $x$.

\begin{rem}
In the proof above, we chose a labeling of the points in the fibre $\pi^{-1}(x)$. Formula (\ref{eq:local}) did not depend on this choice, but the bijections constructed above are nevertheless not canonical. 
\end{rem}

\subsection{Global Theorem}
\label{sec:mainthmglob}

Next, we prove the global formula of Theorem~\ref{thm:introgen}. Keep the notation $X,Y,S,T$ and $\pi$ from above. 

\begin{thm}[{Theorem~\ref{thm:introgen}}]
\label{thm:mainthm}
In the incidence algebra $I(X)$, there is an equivalence of linear functors 
$$
\pi_{*}\zeta_{Y} + \zeta_{X}*L(\pi)^{-} \cong \zeta_{X}*L(\pi)^{+}. 
$$
\end{thm}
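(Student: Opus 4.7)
The plan is to reduce the global equivalence to a product of the local equivalences of Theorem~\ref{thm:mainthmloc} over the closed points of $\P^{1}$, exploiting the fact that the reduced incidence algebra $\widetilde{I}(\P^{1})$ factors through its local pieces.

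First I would make precise the local-to-global factorization of $\widetilde{I}(\P^{1})$. Every effective $0$-cycle $\beta=\sum_{x} b_{x}x$ on $\P^{1}$ is supported on finitely many closed points, and the interval $[\alpha,\beta]$ factors canonically as $\prod_{x}[a_{x}x,b_{x}x]$ in a way that is compatible with the simplicial face maps. This refines the generating function isomorphism of Example~\ref{ex:genfunmap} into an objective statement: convolution in $\widetilde{I}(\P^{1})$ is componentwise under the factorization into the local incidence algebras $I(S_{x})$, and the zeta functor is the infinite convolution product $\zeta_{\P^{1}} = *_{x}\zeta_{\P^{1},x}$. The same analysis for $C$, combined with the unique decomposition of each effective $0$-cycle on $C$ into parts supported in the fibres $\pi^{-1}(x)$, yields $\pi_{*}\zeta_{C} = *_{x}\pi_{*}\zeta_{C,x}$.

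Next I would define the global functors $L(C)^{\pm}$ so that their local components at each $x$ recover the functors $L_{x}(C)^{\pm}$ constructed in Theorem~\ref{thm:mainthmloc}. Because $L_{x}(C)^{-}=\varnothing$ whenever $x$ is ramified or split, only inert points contribute to $L(C)^{-}$; the explicit formula is obtained by formally expanding $*_{x}(L_{x}(C)^{+} - L_{x}(C)^{-})$ and grouping terms by the parity of the number of inert factors in which one takes the ``$-$'' piece. One verifies that the image of $L(C)^{+} - L(C)^{-}$ under the generating function map agrees with the coefficient sequence of $L(C,t)$, ensuring compatibility with formula (\ref{eq:Lfactor}).

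With these definitions in place, the global equivalence reduces to the local one. Rewriting each local statement of Theorem~\ref{thm:mainthmloc} as an inclusion of spans $\zeta_{\P^{1},x}*L_{x}(C)^{-}\hookrightarrow\zeta_{\P^{1},x}*L_{x}(C)^{+}$ whose complement is $\pi_{*}\zeta_{C,x}$ (realized by the local bijections $\varphi_{x}$), I would convolve these inclusions over all closed points and identify the resulting global complement with $\pi_{*}\zeta_{C}$ using the factorizations above. The main obstacle is the bookkeeping in the first step: establishing at the objective level that coproduct distributes over the infinite convolution product compatibly with the local-to-global decomposition. The finite-support property of effective $0$-cycles guarantees that on any fixed interval only finitely many local factors are nontrivial, so the infinite convolution is well-defined; once this foundation is in place, the assembly of the local $\varphi_{x}$ into a single global isomorphism of spans is essentially automatic.
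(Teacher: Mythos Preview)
Your strategy is genuinely different from the paper's. The paper does not deduce the global statement from Theorem~\ref{thm:mainthmloc}; instead it writes down the global subsets $S_1^{\pm}\subseteq S_1$ directly, by imposing the parity condition on the sum of inert coefficients, and then constructs a single explicit bijection $\varphi:T_1\amalg B^{-}\to B^{+}$ by a case analysis on the splitting types of the points in the support of a divisor, in the style of \cite[Thm.~4.5]{ak}. The local theorem serves only as motivation for the shape of $S_1^{\pm}$.

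The assembly step in your plan has a real gap. You correctly observe that $L(C)^{\pm}$ must be obtained by expanding $*_{x}(L_{x}(C)^{+}-L_{x}(C)^{-})$ and regrouping by parity; but precisely for this reason $L(C)^{\pm}$ are \emph{not} the convolutions $*_{x}L_{x}(C)^{\pm}$, and the global pullbacks $B^{\pm}$ are \emph{not} the fibrewise products $\prod_{x}B_{x}^{\pm}$. Membership of $\beta$ in $S_1^{+}$ depends on the \emph{total} parity of its inert coefficients, a condition that couples the local factors. Consequently, ``convolving the inclusions $B_{x}^{-}\hookrightarrow B_{x}^{+}$'' produces the inclusion $\prod_{x}B_{x}^{-}\hookrightarrow\prod_{x}B_{x}^{+}$, whose target is not $B^{+}$ and whose complement is not $T_1$. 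To repair the argument you would have to first decompose $B^{\pm}\cong\coprod_{|J|\text{ even/odd}}\prod_{x\in J}B_{x}^{-}\times\prod_{x\notin J}B_{x}^{+}$ (with $J$ ranging over subsets of inert points), apply the local $\varphi_{x}$ to each $B_{x}^{+}$ factor, and then, for every nonempty $K$ arising in the expansion, supply a bijection between the even- and odd-cardinality subsets of $K$ (e.g.\ by toggling a chosen element). That combinatorial matching is exactly the global content of the theorem; it does not come for free from the local bijections, and your proposal does not provide it.
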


\begin{proof}
As before, the terms in the formula are linear functors represented by the following spans: 
\begin{align*}
    \pi_{*}\zeta_{Y} &= \left (\roofx{S_{1}}{*}{T_{1}}{\pi_{1}}{}\right )
\end{align*}
as well as
\begin{align*}
    \zeta_{X}*L(\pi)^{+} &= \left ( \tikz[xscale=3,yscale=1.7,baseline=30]{
        \node at (0,0) (a) {$S_{1}$};
        \node at (1,0) (b) {$S_{1}\times S_{1}$};
        \node at (2,0) (c) {$*$};
        \node at (.5,.8) (x) {$S_{2}$};
        \node at (1.5,.8) (y) {$S_{1}\times S_{1}^{+}$};
        \node at (1,1.6) (z) {$B^{+}$};
        \draw[->] (z) -- (x) node[above,pos=.5] {$\alpha^{+}$};
        \draw[->] (z) -- (y);
        \draw[->] (x) -- (a) node[left,pos=.3] {$d_{1}$};
        \draw[->] (x) -- (b) node[right,pos=.1] {$(d_{2},d_{0})$};
        \draw[->] (y) -- (b) node[right,pos=.8] {$id\times j^{+}$};
        \draw[->] (y) -- (c);
    }\right )\\
    \text{and}\quad \zeta_{X}*L(\pi)^{-} &= \left ( \tikz[xscale=3,yscale=1.7,baseline=30]{
        \node at (0,0) (a) {$S_{1}$};
        \node at (1,0) (b) {$S_{1}\times S_{1}$};
        \node at (2,0) (c) {$*$};
        \node at (.5,.8) (x) {$S_{2}$};
        \node at (1.5,.8) (y) {$S_{1}\times S_{1}^{-}$};
        \node at (1,1.6) (z) {$B^{-}$};
        \draw[->] (z) -- (x) node[above,pos=.5] {$\alpha^{-}$};
        \draw[->] (z) -- (y);
        \draw[->] (x) -- (a) node[left,pos=.3] {$d_{1}$};
        \draw[->] (x) -- (b) node[right,pos=.1] {$(d_{2},d_{0})$};
        \draw[->] (y) -- (b) node[right,pos=.8] {$id\times j^{-}$};
        \draw[->] (y) -- (c);
    }\right )
\end{align*}
where $S_{1}^{\pm}$ and $j^{\pm}$ are defined below. We must then construct an isomorphism $\varphi : T_{1}\amalg B^{-} \xrightarrow{\sim} B^{+}$ making the following diagram commute: 
\begin{center}
\begin{tikzpicture}[scale=1.3]
  \node at (-1,0) (a) {$S_{1}$};
  \node at (0,1) (b) {$B^{+}$};
  \node at (0,-1) (c) {$T_{1}\amalg B^{-}$};
  \node at (1,0) (d) {$*$};
  \draw[->] (b) -- (a) node[left,pos=.3] {$d_{1}\circ\alpha^{+}$};
  \draw[->] (b) -- (d);
  \draw[->] (c) -- (a) node[left,pos=.4] {$\pi_{1}$};
  \draw[->] (c) -- (d);
  \draw[->] (c) -- (b) node[right,pos=.5] {$\varphi$};
\end{tikzpicture}.
\end{center}

For a $1$-simplex $\alpha\in S_{1}$, write $\alpha = \sum_{x\in X} x^{a_{x}}$. Define $S_{1}^{+}$ and $S_{1}^{-}$ as the following subsets of $S_{1}$: 
\begin{align*}
    S_{1}^{+} &= \left\{\alpha\in S_{1} \mid a_{x} = 0 \text{ for ramified } x,\sum_{x\text{ inert}} a_{x} \text{ is even}\right\}\\
    S_{1}^{-} &= \left\{\alpha\in S_{1} \mid a_{x} = 0 \text{ for ramified and split } x,\sum_{x\text{ inert}} a_{x} \text{ is odd}\right\}. 
\end{align*}
Their natural inclusions $j^{+} : S_{1}^{+}\hookrightarrow S_{1}$ and $j^{-} : S_{1}^{-}\hookrightarrow S_{1}$ define $L(\pi)^{+}$ and $L(\pi)^{-}$ and the span compositions above. 

We now define $\varphi : T_{1}\amalg B^{-} \xrightarrow{\sim} B^{+}$. For $\varphi = \sum_{y\in|C|} y^{b_{y}}\in T_{1}$, put $\varphi(\beta) = \tau\in S_{2}$ with faces 
\begin{align*}
    d_{2}\tau &= \prod_{\substack{y\in\supp(\beta) \\ \text{ramified}}} \pi_{*}(y)^{b_{y}}\prod_{\substack{y\in\supp(\beta) \\ \text{split}}} \pi_{*}(y)^{b_{y}}\\
    d_{0}\tau &= \prod_{\substack{\bar{y}\in\supp(\beta) \\ \text{split}}} \pi_{*}(\bar{y})^{b_{\bar{y}}}\prod_{\substack{y\in\supp(\beta) \\ \text{inert}}} \pi_{*}(y)^{b_{y}}\\
    d_{1}\tau &= \pi_{*}\beta = \prod_{y\in X} \pi_{*}(y)^{b_{y}}. 
\end{align*}
Then since $d_{0}\tau$ is supported away from the ramification locus of $\pi$ and the exponent of $x$ in $\pi_{*}(y)$ is even for inert $y\mapsto x$, we see that $\tau\in B^{+}$. On the other hand, for $\sigma\in B^{-}$ with $d_{2}\sigma = \alpha,d_{0}\sigma = \beta$ and $d_{1}\sigma = \gamma = \alpha\beta$, define $\varphi(\sigma) = \tau$ with faces 
\begin{align*}
    d_{2}\tau &= \prod_{x\text{ ramified}} x^{a_{x}}\prod_{x\text{ split}} x^{a_{x}}\prod_{x\text{ inert}} x^{a_{x} + 1}\\
    d_{0}\tau &= \prod_{\substack{x\in\supp(\beta) \\ \text{inert}}} x^{b_{x} - 1} \quad\text{and}\quad d_{1}\tau = \gamma = \alpha\beta. 
\end{align*}
Then as above, $d_{0}\tau$ is supported away from the ramified points in $X$ and $\beta = d_{0}\sigma\in S_{1}^{-}$ implies $b_{x} - 1$ is even, so $\tau\in B^{+}$. 

To show $\varphi$ is an isomorphism, we construct an inverse $\varphi^{-1} : B^{+}\rightarrow T_{1}\amalg B^{-}$ as follows. If $\tau\in B^{+}$ has $d_{2}\tau = \alpha,d_{0} = \beta$ and $d_{1}\tau = \gamma = \alpha\beta$, with $\alpha$ supported away from all inert points in $X$, send it to 
$$
\varphi^{-1}(\tau) = \sum_{x\text{ ramified}} y^{c_{x}}\prod_{x\text{ split}} y^{a_{x}}\bar{y}^{b_{x}}\prod_{\substack{x\text{ inert} \\ b_{x}\text{ even}}} y^{b_{x}/2}\prod_{\substack{x\text{ inert} \\ b_{x}\text{ odd}}} y^{b_{x}} \;\in T_{1}. 
$$
If $\alpha$ is supported on any inert points, send $\tau$ to $\beta^{-1}(\tau) = \sigma\in B^{-}$ with faces 
\begin{align*}
    d_{2}\sigma &= \prod_{x\text{ ramified}} x^{a_{x}}\prod_{x\text{ split}} x^{a_{x}}\prod_{\substack{x\in\supp(\alpha) \\ \text{ inert}}} x^{a_{x} - 1}\\
    d_{0}\sigma &= \sum_{x\text{ ramified}} x^{b_{x}}\prod_{x\text{ split}} x^{b_{x}}\prod_{x\text{ inert}} x^{b_{x} + 1}\\
    d_{1}\sigma &= d_{1}\tau = \gamma. 
\end{align*}
By construction, $\varphi$ is a bijection and it satisfies $d_{1}\circ\varphi = \pi\amalg d_{1}$, completing the proof. 
\end{proof}

\begin{rem}
Recall that for each split point $x\in X$, we are choosing a labeling $\pi^{-1}(x) = \{y,\bar{y}\}$ in order to write down the formulas in the above proof. This implies that the isomorphism $\varphi$ is not canonical: it depends on a choice of section of $\pi$ over the split locus. 
\end{rem}

\begin{rem}
\label{rem:card}
Taking the cardinality of the formula in Theorem~\ref{thm:mainthm} recovers formulas (\ref{eq:quadzeta}), (\ref{eq:Lfactor}), (\ref{eq:Macdonaldsplit}) and (\ref{eq:Iharasplit}), so Theorem~\ref{thm:mainthm} can be considered a categorification of those zeta function formulas. Explicitly, cardinality is a map from the objective incidence algebra $I(X)$ to the reduced numerical incidence algebra $I_{\#}(X)$: 
\begin{align*}
    |\cdot| : I(X) &\longrightarrow I_{\#}(X)\\
    f = \left (\roofx{S_{1}}{*}{M}{v}{}\right ) &\longmapsto \biggl (|f| : \alpha \mapsto |v^{-1}(\alpha)|\biggr ). 
\end{align*}
Applying $|\cdot|$ to the formula $\pi_{*}\zeta_{Y} + \zeta_{X}*L(\pi)^{-} \cong \zeta_{X}*L(\pi)^{+}$ produces a formula which, upon further applying one of the ``generating function maps'' from Sections~\ref{sec:incalgfield}, \ref{sec:0cyc}, \ref{sec:incalgCW} and~\ref{sec:incalggraph}, recovers formulas (\ref{eq:quadzeta}), (\ref{eq:Lfactor}), (\ref{eq:Macdonaldsplit}) and (\ref{eq:Iharasplit}), respectively. 
\end{rem}

\begin{rem}
\label{rem:posetversion}
When $X$ is the ring of integers in a number field, an algebraic curve over $\F_{q}$ or a finite graph, it is possible to formulate a version of Theorem~\ref{thm:mainthm} in the ``full'' incidence algebra of the poset $S = Z_{0}^{\eff}(X)$, rather than treating it as a monoid. This corresponds to first taking the decalage of the monoid structure, then applying the same techniques as above. We have elected to omit further discussion of these formulas since they don't appear to have practical applications. Nevertheless, they provide new relations not just among effective $0$-cycles in a cover (and therefore points), but also intervals of effective $0$-cycles (and therefore families of points). 
\end{rem}

\begin{rem}
\label{rem:arbitrarydoublecovers}
The $X = C/\F_{q}$ case of Theorem~\ref{thm:mainthm} generalizes easily to arbitrary double covers of algebraic curves $C\rightarrow D$ over $\F_{q}$, giving interesting formulas for their zeta functions in $k[[t]]$: 
\begin{equation}\label{eq:arbitrarybase}
Z(C,t) = Z(D,t)L(C/D,t)
\end{equation}
for an appropriate ``relative $L$-function'' $L(C/D,t)$. The novelty of Theorem~\ref{thm:mainthm} is that it expresses this relation directly in the incidence algebra of $D$ (objectively or numerically), which is stronger than (\ref{eq:arbitrarybase}) even in the case when $D = \P^{1}$. 
\end{rem}

\subsection{General Topological Covers}

Beyond topological double covers, which are already handled by Theorem~\ref{thm:mainthm}, we can use objective linear algebra to categorify formula (\ref{eq:Macdonaldsplit}) for ramified covers of any degree. 

First, recall formula (\ref{eq:topEuler}), which says that for an unbranched cover $Y\rightarrow X$ of degree $n$, $\chi(Y) = n\chi(X)$. Let's lift this formula to an objective formula in an incidence algebra. 

As in Section~\ref{sec:incalgCW}, let $S_{\bullet}(X)$ (resp.~$S_{\bullet}(Y)$) be the simplicial complex associated to $X$ (resp.~$Y$) and let $S(\pi) : S_{\bullet}(Y)\rightarrow S_{\bullet}(X)$ be the simplicial map induced by $\pi$. This induces a map of incidence algebras $\pi_{*} : I(Y)\rightarrow I(X)$. 

In dimension $0$, a degree $n$ cover $Y\rightarrow *$ is just a set of $n$ points $Y = \{y_{1},\ldots,y_{n}\}$, so formula (\ref{eq:topEuler}) in this case is the decategorification of 
$$
\zeta_{Y} \cong \underbrace{\zeta_{*}*\cdots*\zeta_{*}}_{n}. 
$$
This works because points have no nondegenerate simplices of higher dimension. 

Unfortunately, this only applies to $0$-simplices. The M\"{o}bius function is better suited to computing Euler characteristic objectively. For any $X$, the zeta functor in $I(S_{\bullet}(X))$ is 
$$
\zeta_{X} = \left (
\roofx{S_{1}(X)}{*}{S_{1}(X)}{id}{}
\right ). 
$$
Define $\zeta_{X}^{(k)}\in I(S_{\bullet}(X))$ by 
$$
\zeta_{X}^{(k)} = \left (\roofx{S_{1}(X)}{*}{S_{k}(X)^{\circ}}{d_{1}^{k}}{}\right )
$$
where $S_{k}(X)^{\circ}\subseteq S_{k}(X)$ is the subset of nondegenerate $k$-simplices. (This is not to be confused with the $k$-fold convolution $\zeta_{X}^{k}$.) Following \cite{gkt2}, we also put $\zeta_{X}^{(0)} = \delta_{X}$, the unit of convolution in $I(S_{\bullet}(X))$, and define 
$$
\Phi_{even} = \sum_{k = 0}^{\infty} \zeta_{X}^{(2k)} \quad\text{and}\quad \Phi_{odd} = \sum_{k = 0}^{\infty} \zeta_{X}^{(2k + 1)}. 
$$
These functors satisfy an objective form of M\"{o}bius inversion \cite[Thm.~3.8]{gkt2}: 
$$
\zeta_{X}*\Phi_{even} \cong \delta + \zeta_{X}*\Phi_{odd}. 
$$
In the numerical incidence algebra $I_{\#}(S_{\bullet}(X))$, this formula descends to 
$$
\delta = \zeta_{X}*(\Phi_{even} - \Phi_{odd}) = \zeta_{X}*\mu_{X}
$$
where $\mu_{X}$ is the numerical M\"{o}bius function for $S_{\bullet}(X)$. 

Define spaces $E_{even}(X),E_{odd}(X)\in\cat{Top}$ by 
$$
E_{even}(X) = \coprod_{s\in S_{1}(X)} \Phi_{even}(s) \quad\text{and}\quad E_{odd}(X) = \coprod_{s\in S_{1}(X)} \Phi_{odd}(s). 
$$
Here, $F(s)$ denotes the fibre $M_{s} = p^{-1}(s)$ for any linear functor $F$ represented by a span 
$$
F = \left (\roofx{S_{1}(X)}{*}{M}{p}{}\right ). 
$$
Already, this categorifies the Euler characteristic: 

\begin{prop}
If $X$ is a finite CW complex, then $\chi(X) = |E_{even}(X)| - |E_{odd}(X)|$. 
\end{prop}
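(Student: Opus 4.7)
The plan is to unwind the definitions so that each of $|E_{even}(X)|$ and $|E_{odd}(X)|$ becomes a count of non-degenerate simplices of $X$, and then to read off the alternating sum that defines $\chi(X)$.

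First I would record an elementary observation in objective linear algebra: whenever a linear functor $F \in I(S_\bullet(X))$ is represented by a single span $\roofx{S_1(X)}{*}{M}{p}{}$, one has $\coprod_{s \in S_1(X)} F(s) = \coprod_{s \in S_1(X)} p^{-1}(s) \cong M$, so $\sum_{s} |F(s)| = |M|$. Because a sum (coproduct) of linear functors is represented by the span whose top space is the disjoint union of the summand tops, applying this to
\[
\Phi_{even} = \delta_X + \sum_{k \geq 1} \zeta_X^{(2k)} \quad\text{and}\quad \Phi_{odd} = \sum_{k \geq 0} \zeta_X^{(2k+1)}
\]
yields canonical bijections
\[
E_{even}(X) \;\cong\; S_0(X) \;\amalg\; \coprod_{k \geq 1} S_{2k}(X)^{\circ} \quad\text{and}\quad E_{odd}(X) \;\cong\; \coprod_{k \geq 0} S_{2k+1}(X)^{\circ},
\]
where the $S_0(X)$ summand comes from the unit $\delta_X$ (whose span has top $S_0(X)$ mapping to $S_1(X)$ via the degeneracy $s_0$) and each $S_k(X)^{\circ}$ is the top of the span representing $\zeta_X^{(k)}$.

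Since $X$ is a finite CW complex, $S_k(X)^{\circ}$ is a finite set for every $k$ and is empty for $k$ sufficiently large, and $|S_k(X)^{\circ}|$ equals the number $c_k(X)$ of $k$-cells of $X$ (noting that $|S_0(X)| = c_0(X)$ because every $0$-simplex is non-degenerate). Subtracting then gives
\[
|E_{even}(X)| - |E_{odd}(X)| = \sum_{k \geq 0} c_{2k}(X) - \sum_{k \geq 0} c_{2k+1}(X) = \sum_{k \geq 0} (-1)^k c_k(X) = \chi(X),
\]
which is the desired identity.

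The main subtlety is purely bookkeeping: the unit $\delta_X = \zeta_X^{(0)}$ must be treated separately because its span is given by the degeneracy $s_0 : S_0(X) \to S_1(X)$ rather than by the generic ``long edge'' recipe used for $\zeta_X^{(k)}$ with $k \geq 1$; but this is precisely what supplies the leading $+c_0(X)$ term, so the $k = 0$ contribution slots neatly into the alternating sum defining $\chi(X)$.
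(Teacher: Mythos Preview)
Your argument is correct. The paper states this proposition without proof, evidently regarding it as an immediate consequence of the definitions of $E_{even}(X)$ and $E_{odd}(X)$; your write-up is precisely the unwinding of those definitions that the paper leaves implicit, and it matches the intended reasoning (nondegenerate $k$-simplices count $k$-cells, and the $\delta_X = \zeta_X^{(0)}$ summand contributes the $c_0(X)$ term).
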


We can go beyond Euler characteristic. The objects $E_{even}(X)$ and $E_{odd}(X)$ can be viewed as linear functors in $I(S_{\bullet}(X))$: 
$$
\Phi_{even}(X) = \left (\roof{S_{1}(X)}{*}{E_{even}(X)}\right ) \quad\text{and}\quad \Phi_{odd}(X) = \left (\roof{S_{1}(X)}{*}{E_{odd}(X)}\right ). 
$$
In this way, the construction can be interpreted as an ``objective Macdonald polynomial'': pushing forward $\Phi_{even}(X)$ and $\Phi_{odd}(X)$ along the terminal map $t : X\rightarrow *$ produces linear functors $t_{*}\Phi_{even}(X),t_{*}\Phi_{odd}(X)\in I(S_{\bullet}(*))$ and, as discussed above, their decategorifications in $I_{\#}(S_{\bullet}(*)) \cong k[[t]]$ are 
$$
|t_{*}\Phi_{even}(X)| = (1 - t)^{-|E_{even}(X)|} \quad\text{and}\quad |t_{*}\Phi_{odd}(X)| = (1 - t)^{-|E_{odd}(X)|}
$$
so that 
$$
\frac{|t_{*}\Phi_{even}(X)|}{|t_{*}\Phi_{odd}(X)|} = (1 - t)^{-|E_{even}(X)| + |E_{odd}(X)|} = (1 - t)^{-\chi(X)}. 
$$

\begin{thm}
For a degree $n$ cover $\pi : Y\rightarrow X$ of finite CW complexes, there is an equivalence of linear functors 
$$
\pi_{*}\Phi_{even}(Y)*\Phi_{odd}(X)^{n} \cong \pi_{*}\Phi_{odd}(Y)*\Phi_{even}(X)^{n}
$$
in the objective incidence algebra $I(S_{\bullet}(X))$. Here, $(-)^{n}$ denotes the $n$-fold convolution. 
\end{thm}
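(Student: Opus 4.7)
The strategy parallels the proof of Theorem~\ref{thm:mainthm}: I would construct an explicit equivalence of spans in $I(S_{\bullet}(X))$. Unpacking the convolutions, an element in the middle of the span for $\pi_{*}\Phi_{even}(Y)*\Phi_{odd}(X)^{n}$ above $s\in S_{1}(X)$ consists of an $(n+1)$-simplex $\sigma\in S_{n+1}(X)$ decomposing $s$ as $s_{0}s_{1}\cdots s_{n}$, a nondegenerate even-dimensional simplex $\tau$ of $Y$ with $\pi(d_{1}^{2k}\tau)=s_{0}$, and nondegenerate odd-dimensional simplices $\omega_{i}$ of $X$ with $d_{1}^{2k_{i}+1}\omega_{i}=s_{i}$ for $i=1,\ldots,n$. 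The right-hand side admits an analogous description with the parities of the $Y$- and $X$-simplices swapped, so the task is to produce, compatibly in $\sigma$, a bijection between these two decorated data sets.

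The essential input is the simplicial path-lifting property of unbranched covers: each nondegenerate simplex of $X$ admits exactly $n$ nondegenerate lifts in $Y$, and lifting is compatible with face maps. Trivializing the cover above each simplex of $X$ allows us to rewrite ``nondegenerate even simplex $\tau$ of $Y$ with $\pi d_{1}\tau=s_{0}$'' as a pair consisting of a nondegenerate even simplex $\bar\tau$ of $X$ with $d_{1}\bar\tau=s_{0}$ together with a label $i\in\{1,\ldots,n\}$. Under this identification, both sides of the theorem become spans whose middles are tuples with one labeled slot of a given parity and $n$ unlabeled slots of the opposite parity. The bijection $\varphi$ then exchanges the labeled slot with the $i$-th unlabeled slot matching the label, with parities simultaneously swapping to preserve the overall even/odd structure.

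The main obstacle is that a naive swap violates the parity constraints on individual slots: the even $X$-simplex being moved out of the labeled position must become odd, and vice versa. To resolve this, I would invoke objective M\"{o}bius inversion on $Y$, $\zeta_{Y}*\Phi_{even}(Y)\cong\delta_{Y}+\zeta_{Y}*\Phi_{odd}(Y)$, and iterate it across the $n$ slots to trade an even for an odd simplex at the cost of $\delta$-type corrections. The crucial cancellation is that pushforward along $\pi$ converts each $Y$-level M\"{o}bius correction into an $n$-fold contribution, reflecting the categorified identity $\pi_{*}\mu_{Y}\cong n\cdot\mu_{X}$; this $n$-fold factor is absorbed precisely into the $n$ opposite-parity slots. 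Tracking this bookkeeping slot-by-slot, in the spirit of the ramified/split/inert case analysis of Theorem~\ref{thm:mainthm}, is where the main work lies, and decategorifying the resulting equivalence recovers the Euler characteristic formula $\chi(Y)=n\chi(X)$ via the generating-function pushforward described earlier in the section.
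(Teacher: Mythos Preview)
You correctly isolate the only substantive input---that each nondegenerate cell of $X$ has exactly $n$ nondegenerate lifts in $Y$---but then take a much longer route than the paper. The paper's argument is two lines: the lifting property yields bijections $E_{even}(Y)\cong\coprod_{i=1}^{n}E_{even}(X)$ and $E_{odd}(Y)\cong\coprod_{i=1}^{n}E_{odd}(X)$ over $S_{1}(X)$, and these are asserted to ``assemble into the stated formula.'' There is no unpacking of the $(n{+}1)$-fold convolution, no slot-swap, and no M\"{o}bius inversion; the work happens entirely at the level of the representing sets $E_{even}$ and $E_{odd}$ of the spans for $\pi_{*}\Phi_{even}(Y)$ and $\pi_{*}\Phi_{odd}(Y)$.

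Your route instead unpacks the convolution data and attempts a direct slot-swap bijection, and here there is a genuine gap. You correctly observe that a naive swap fails on parity, but the proposed fix via iterated objective M\"{o}bius inversion and the identity ``$\pi_{*}\mu_{Y}\cong n\cdot\mu_{X}$'' does not type-check: $\mu_{Y}$ exists only in the \emph{numerical} incidence algebra as the formal difference $\Phi_{even}-\Phi_{odd}$, so there is no objective linear functor $\mu_{Y}$ to push forward, and the displayed ``equivalence'' cannot be invoked inside an objective argument. Moreover, the M\"{o}bius relation $\zeta_{Y}*\Phi_{even}\cong\delta+\zeta_{Y}*\Phi_{odd}$ trades even for odd only in the presence of an extra $\zeta$-factor and a $\delta$-summand, neither of which is present in your convolution data; the ``$\delta$-type corrections'' you anticipate have no mechanism for objective cancellation, and the sketch stalls at exactly that point. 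The paper sidesteps all of this by working with the span representatives $E_{even}(Y)$ and $E_{odd}(Y)$ directly rather than with their convolution unpackings.
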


\begin{proof}
Each nondegenerate $k$-cell in $X$ lifts to exactly $n$ nondegenerate $k$-cells in $Y$. This gives us bijections $E_{even}(Y)\xrightarrow{\sim} \coprod_{i = 1}^{n} E_{even}(X)$ and $E_{odd}(Y)\xrightarrow{\sim} \coprod_{i = 1}^{n} E_{odd}(X)$ which assemble into the stated formula. 
\end{proof}

Pushing forward to the incidence algebra of a point and taking cardinalities recovers formula (\ref{eq:topEuler}) via generating functions: 
$$
(1 - t)^{-\chi(Y)} = \frac{|t_{*}\Phi_{even}(Y)|}{|t_{*}\Phi_{odd}(Y)|} = \left (\frac{|t_{*}\Phi_{even}(X)|}{|t_{*}\Phi_{odd}(X)|}\right )^{n} = (1 - t)^{-n\chi(X)}. 
$$

\begin{thm}
\label{thm:top}
For a connected, degree $n$ branched cover $\pi : Y\rightarrow X$, there is a linear functor $R\in I(S_{\bullet}(Y))$ and an equivalence 
$$
\pi_{*}\Phi_{even}(Y)*\Phi_{odd}(X)^{n}*\pi_{*}R \cong \pi_{*}\Phi_{odd}(Y)*\Phi_{even}(X)^{n}
$$
in $I(S_{\bullet}(X))$. 
\end{thm}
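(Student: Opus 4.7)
The plan is to extend the bijection argument used in the preceding (unbranched) theorem, correcting it by the functor $R$ to account for the sheets ``lost'' over the branch locus. In the unbranched setting, the proof hinges on the observation that every nondegenerate $k$-simplex of $X$ admits exactly $n$ nondegenerate $k$-simplex preimages in $Y$, giving bijections $E_{even}(Y)\cong\coprod_{i=1}^{n}E_{even}(X)$ and $E_{odd}(Y)\cong\coprod_{i=1}^{n}E_{odd}(X)$. When $\pi$ is branched, this exact counting fails over any simplex whose image lies in a branch component $X_{i}$: instead of $n$ preimages we see $\sum_{Z\subseteq\pi^{-1}(X_{i})}1$, with the deficit measured by $\sum_{Z}(e(Z)-1)$. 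The role of $R$ will be to supply precisely these missing simplices as formal ``ghost sheets.''

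Concretely, I would define $R$ as follows. For each ramified component $Z\subseteq Y$, choose a set $P_{Z}$ of $e(Z)-1$ distinct $0$-simplices of $Z$, and let $P=\coprod_{Z}P_{Z}$, so that $|P|=r:=\sum_{Z}(e(Z)-1)$ is the ramification number. Let $\iota:P\hookrightarrow Y$ be the inclusion and set $R:=\iota_{*}\zeta_{P}\in I(S_{\bullet}(Y))$. By construction, pushing $\pi_{*}R$ forward to a point along the terminal map gives $(1-t)^{-r}$, which is exactly the factor needed to bridge $\chi(Y)$ and $n\chi(X)$ in the numerical ramification formula (\ref{eq:branchedEuler}). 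A more canonical definition of $R$ (avoiding the choice of $P_{Z}$) could be assembled from $\Phi_{even}$ and $\Phi_{odd}$ of the ramification components together with multiplicities $e(Z)-1$; either convention should yield an equivalent functor.

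With $R$ in hand, the body of the proof mirrors the unbranched argument, proceeding simplex by simplex over $X$. Over any nondegenerate simplex $\sigma\in S_{\bullet}(X)^{\circ}$ not lying in the branch locus, the $n$ preimages of $\sigma$ partition both sides of the equivalence into matching pieces exactly as before. Over a simplex $\sigma$ in some branch component $X_{i}$, the $\sum_{Z}1$ actual preimages in $Y$, combined with the $\sum_{Z}(e(Z)-1)$ phantom preimages contributed by $\pi_{*}R$, collectively number $n$, and again match the two sides. Assembling these local span bijections yields the desired equivalence in $I(S_{\bullet}(X))$.

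The main obstacle is to make these local bijections compatible, both with the simplicial structure and with the iterated convolutions $\Phi_{even}(X)^{n}$ and $\Phi_{odd}(X)^{n}$. The convolution $\Phi_{odd}(X)^{n}$ encodes chains of nondegenerate simplices, so one must verify that inserting $\pi_{*}R$ on the left redistributes the phantom contributions across these chains exactly as the additional simplices of $E_{even}(Y)$ and $E_{odd}(Y)$ demand; this is the technical analogue of the case analysis at ramified, split, and inert points in the proof of Theorem~\ref{thm:mainthmloc}. A related subtlety is that $X$ and $Y$ should be equipped with CW structures in which the branch and ramification loci are themselves subcomplexes, so that the dichotomy between branched and unbranched simplices is well-defined; otherwise one cannot cleanly separate the cases in which the unbranched bijection applies from those requiring the correction by $R$.
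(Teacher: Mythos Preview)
Your approach is essentially the paper's: identify the deficit of preimages over the branch locus, package it as $R$, and then assemble the same simplex-counting bijections used in the unbranched case. The paper carries this out only for branched covers of surfaces, where the branch locus is a finite set of points $\{x_{1},\ldots,x_{r}\}$; there the single bijection $\coprod_{i=1}^{n}E_{even}(X)\cong E_{even}(Y)\amalg R$ (with $E_{odd}$ unchanged, since $1$- and $2$-cells lift exactly $n$-to-$1$) is declared and the formula asserted to follow.

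One concrete issue with your construction: you ask for $e(Z)-1$ \emph{distinct} $0$-simplices of each ramified component $Z$, but in the surface case $Z$ is a single point, so this is impossible whenever $e(Z)>2$. The paper sidesteps this by letting the $z_{ij}$ be purely formal ``missing points'' rather than actual simplices of $Y$, so $R$ is just a set of the correct cardinality with a map to $S_{1}(Y)$ recording which fibre each ghost belongs to; your parenthetical alternative (using multiplicities $e(Z)-1$) is closer to what is actually needed. The concerns you raise in your last two paragraphs---compatibility of the local bijections with the iterated convolutions, and the need for a CW structure in which the branch locus is a subcomplex---are reasonable, but the paper does not address them: it restricts to surfaces, writes down the one bijection above, and stops.
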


\begin{proof}
For simplicity, we give the proof in the case when $\pi : Y\rightarrow X$ is a branched cover of surfaces. Let $X_{br} = \{x_{1},\ldots,x_{r}\}$ be the branch locus of $\pi$ and for each $x_{i}$, write $\pi^{-1}(x_{i}) = \{y_{i1},\ldots,y_{ik_{i}}\}$ for each $1\leq i\leq r$. Each nondegenerate $k$-cell $\sigma\in S_{k}(X)$ lifts to $n$ nondegenerate $k$-cells in $S_{k}(Y)$ \emph{except} when when $k = 0$ and $\sigma = x_{i}$ for some $1\leq i\leq r$. In this case, there are $n - k_{i} = \sum_{j = 1}^{k_{i}} (e(y_{ij}) - 1)$ points ``missing'' from the count. Said another way, there is a bijection 
$$
\coprod_{i = 1}^{n} E_{even}(X) \xrightarrow{\sim} E_{even}(Y)\amalg\{z_{i1},\ldots,z_{i,n - k_{i}}\}_{i = 1}^{r}
$$
where $z_{i1},\ldots,z_{i,n - k_{i}}$ are these ``missing points'' in $\pi^{-1}(x_{i})$. Setting $R = \{z_{i1},\ldots,z_{i,n - k_{i}}\}_{i = 1}^{r}$, we obtain the stated formula. 
\end{proof}

\begin{rem}
Let $G$ be the group of deck transformations of $\pi : Y\rightarrow X$. Then it should be possible to lift formula (\ref{eq:branchedEuler}) to an equivalence of linear functors of $G$-representations, eliminating the need to separate $\Phi_{even}$ and $\Phi_{odd}$ in the above formulation. See Appendix~\ref{app:Grep}. 
\end{rem}

\begin{question}
Is it also possible to prove $\chi(X\times Y) = \chi(X)\chi(Y)$ this way? And more generally $\chi(E) = \chi(B)\chi(F)$ for a fibration $F\rightarrow E\rightarrow B$?
\end{question}

\subsection{A Motivic Formula}
\label{sec:motivic}

In the case of a cover of curves $Y\rightarrow X$ over a field $k$, Theorem~\ref{thm:mainthm} can be generalized to a motivic formula by replacing the decomposition sets $Z_{0}^{\eff}(X)$ and $Z_{0}^{\eff}(Y)$ with the {\it decomposition schemes} $S^{\bullet}X$ and $S^{\bullet}Y$, where 
$$
S^{n}X = \coprod_{(j_{0},\ldots,j_{n - 1})\in\N_{0}^{n}} \prod_{i = 0}^{n - 1} \Sym^{j_{i}}X
$$
with the simplicial scheme structure induced by the simplicial structure on the indices (i.e.~the additive monoid $\N_{0}$). That is, $S^{\bullet}X$ is the decomposition scheme attached to the monoid scheme $\coprod_{n\geq 0} \Sym^{n}X$. A variant of the proof of Theorem~\ref{thm:mainthm} gives the following formula. 

\begin{thm}
\label{thm:motivicformula}
Let $\pi : Y\rightarrow X$ be a double cover of varieties over a field $k$ and let $I_{mot}(X)$ be the incidence algebra of the decomposition scheme $S^{\bullet}X$. Then in $I_{mot}(X)$, there is an equivalence of linear functors 
$$
\pi_{*}\zeta_{Y} + \zeta_{X}*L(\pi)^{-} \cong \zeta_{X}*L(\pi)^{+}. 
$$
\end{thm}

\begin{proof}
Viewing the scheme 
$$
S^{1}X = \coprod_{n = 0}^{\infty} \Sym^{n}X
$$
via its functor of points, we can extend the definitions of $S_{1}^{+}$ and $S_{1}^{-}$ from Theorem~\ref{thm:mainthm} functorially to subschemes $(S^{1}X)^{+},(S^{1}X)^{-}\subseteq S^{1}X$ which determine spans $L(\pi)^{+}$ and $L(\pi)^{-}$ representing linear functors in the incidence algebra $I(X)$. The rest of the proof goes through as before, by the Yoneda lemma. 
\end{proof}

This in turn determines an analogous formula in $R$ for each motivic measure $\mu : K_{0}(\cat{Var}_{k})\rightarrow R$, where $K_{0}(\cat{Var}_{k})$ is the Grothendieck ring of $k$-varieties. Several examples include: 
\begin{itemize}
    \item $k = \F_{q}$, $R = \Z$, $\mu = \#(-)(\F_{q})$, the point-counting measure over the finite field $\F_{q}$. Since $\Sym^{n}X(\F_{q}) = Z_{0}^{\eff}(X)_{n}$, the set of effective $0$-cycles of degree $n$, this decategorification recovers Theorem~\ref{thm:mainthm}. 
    \item $k = \C$, $R = \Z$, $\mu = $ Euler characteristic, which recovers formula (\ref{eq:Macdonaldsplit}) for Riemann surfaces. 
    \item $R = \Z[SB]$, the ring of stable birational equivalence classes, and $\mu = $ the Larsen--Lunts measure \cite{ll}. 
    \item $R = K_{0}(\cat{Var}_{k})$, $\mu = \operatorname{id}$, giving a universal version of (\ref{eq:genquad}) in $K_{0}(\cat{Var}_{k})$. Also compare this to the approach in \cite{dh}. 
\end{itemize}

\section{Applications}
\label{sec:app}

\subsection{Point Counts on Elliptic Curves}
\label{sec:pointcounts}

Let $E$ be an elliptic curve over $\F_{q}$ with $L$-polynomial $L(E,t) = 1 - a_{q}(E)t + qt^{2}$ where $a_{q}(E) = q + 1 - \#E(\F_{q})$. On one hand, formula (\ref{eq:Lfactor}) says that 
$$
Z(E,t) = Z(\P^{1},t)L(E,t). 
$$
On the other hand, Theorems~\ref{thm:mainthmloc} and~\ref{thm:mainthm}, through Remark~\ref{rem:card}, show that $L(E,t) = L(E,t)^{+} - L(E,t)^{-}$ can be written as 
$$
L(E,t) = \prod_{x\in|\P^{1}|} (1 - \chi(x)t^{\deg(x)})^{-1}
$$
where $\chi$ is the following quadratic character: 
$$
\chi(x) = \begin{cases}
    0, & x \text{ is ramified}\\
    1, & x \text{ is split}\\
    -1, & x \text{ is inert}
\end{cases}
$$
which can be extended multiplicatively to all effective $0$-cycles. 

Let's spell this out explicitly. The $t^{n}$ coefficient of $L(E,t)^{+}$ is the cardinality of the fibre of $j^{+} : S_{1}^{+}\rightarrow S_{1}$ over the set $S_{1}(n)$ of $0$-cycles of degree $n$ in $S_{1}$: 
\begin{center}
\begin{tikzpicture}[xscale=3,yscale=1.7,baseline=30]
    \node at (1,0) (a) {$S_{1}$};
    \node at (.5,.8) (b) {$S_{1}(n)$};
    \node at (1.5,.8) (c) {$S_{1}^{+}$};
    \node at (1,1.6) (d) {$S_{1}(n)^{+}$};
    \draw[->] (d) -- (b);
    \draw[right hook ->] (d) -- (c);
    \draw[right hook ->] (b) -- (a);
    \draw[->] (c) -- (a) node[above,pos=.5] {$j^{+}$};
\end{tikzpicture}.
\end{center}
Then $\#S_{1}(n)^{+}$ is equal to the number of effective $0$-cycles $\alpha = \sum_{x} a_{x}x$ on $\P^{1}$ of degree $n$ with $a_{x} = 0$ for ramified $x$ and the sum of the $a_{x}$ even for inert $x$; $\#S_{1}(n)^{-}$ is defined similarly. Putting these together, 
$$
\#S_{1}(n)^{+} - \#S_{1}(n)^{-} = \sum_{\deg(\alpha) = n} \chi(\alpha). 
$$
This allows us to reinterpret the $L$-polynomial in terms of the character $\chi$, as claimed. Write 
$$
\prod_{x\in|\P^{1}|} (1 - \chi(x)t^{\deg(x)})^{-1} = \sum_{n = 0}^{\infty} f(n)t^{n} \quad\text{where}\quad f(n) = \sum_{\deg(\alpha) = n} \chi(\alpha). 
$$
As an element of the numerical incidence algebra $I_{\#}(Z_{0}^{\eff}(\P^{1}))$, $f$ is the sequence 
$$
f(0) = 1, \quad f(1) = -a_{q}(E), \quad f(2) = q \quad\text{and}\quad f(n) = 0 \text{ for } n > 2.
$$
For $n = 1$, this implies the first part of formula (\ref{eq:aq}): 
\begin{equation}\label{eq:aq1}
a_{q}(E) = -\sum_{x\in\P^{1}(\F_{q})} \chi(x) = \#\{\text{inert } x\in\P^{1}(\F_{q})\} - \#\{\text{split } x\in\P^{1}(\F_{q})\} =: i_{q}(E) - s_{q}(E)
\end{equation}
while for $n = 2$, we obtain 
$$
q = \sum_{\deg(\alpha) = 2} \chi(\alpha). 
$$
Formula (\ref{eq:Lfactor}) then implies 
\begin{equation}\label{eq:aq2}
\#\Sym^{2}E(\F_{q}) = (q^{2} + q + 1) - (q + 1)a_{q} + \sum_{\deg(\alpha) = 2} \chi(\alpha). 
\end{equation}
A similar proof shows the rest of formula (\ref{eq:aq}): 
\begin{equation}
\#\Sym^{n}E(\F_{q}) = \sum_{i + j = n} (q^{i} + \ldots + q + 1)\sum_{\deg(\alpha) = j} \chi(\alpha). 
\end{equation}

These are the recursions which generate all point counts $\#E(\F_{q^{r}})$ from knowledge of just $\#E(\F_{q})$. (To go from the sequence $\#\Sym^{n}E(\F_{q})$ to the sequence $\#E(\F_{q^{n}})$, take ghost components, or equivalently, apply the logarithmic derivative to $Z(E,t)$.) Our proof demonstrates that they come directly from an objective $L$-function and, more importantly, that this $L$-function enumerates points (weighted by $\chi$) \emph{on the projective line}. 

\begin{ex}
\label{ex:1.5.d.f5}
Consider the elliptic curve $E/\F_{5}$ \cite[\href{https://www.lmfdb.org/Variety/Abelian/Fq/1/5/d}{Abelian Variety 1.5.d}]{lmfdb} with Weierstrass equation 
$$
E : y^{2} = x^{3} + x + 1. 
$$
It is known that $\#E(\F_{5}) = 9$ (see below), giving $L(E,t) = 1 + 3t + 5t^{2}$. Projecting to $\P^{1}$, there are $3$ more split points than inert points. Since $x^{3} + x + 1$ is irreducible over $\F_{5}$, $\infty$ is the only ramified point on $\P^{1}$. Split points contribute $2$ rational points on $E$, so there must be exactly $4$ split points and $1$ inert point downstairs. Here's a table of the closed points of $E$ with images in $\P^{1}(\F_{5})$, organized by splitting type of their images. 
\begin{center}
\begin{tabular}{|c|c|c|c|c|c|}
    \hline
    point & ramified image & point & split image & point & inert image\\
    \hline
    $O$ & $\infty$ & $(0,1)$ & $0$  & $\left (1,\sqrt{3}\right )$   & $1$ \\
         &          & $(0,4)$ &     & $\left (1,-\sqrt{3}\right )$  & \\
         &          & $(2,1)$ & $2$ &                               & \\
         &          & $(2,4)$ &     &                               & \\
         &          & $(3,1)$ & $3$ &                               & \\
         &          & $(3,4)$ &     &                               & \\
         &          & $(4,2)$ & $4$ &                               & \\
         &          & $(4,3)$ &     &                               & \\
    \hline
\end{tabular}
\end{center}
The $8$ split points cover the $4$ split points on $\P^{1}$ identified above. The inert points $\left (1,\pm\sqrt{3}\right )$ cover the inert point $1\in\P^{1}(\F_{5})$, with $\sqrt{3}$ a fixed square root of $3$ in $\F_{25}$. Moreover, formula (\ref{eq:aq2}) quickly allows us to compute that $\#E(\F_{25}) = 27$. We can further deduce then that in $\P^{1}(\F_{25})$, there are $12$ split points, lifting to $24$ of the $27$ points in $E(\F_{25})$, and $13$ inert points. Similarly, $\#E(\F_{125}) = 108$ and the $126$ points in $\P^{1}(\F_{125})$ consist of $4$ ramified points, $31$ split points and $91$ inert points. 
\end{ex}

\begin{ex}
\label{ex:1.5.ad.f5}
Up to isogeny, the elliptic curve $E$ in Example~\ref{ex:1.5.d.f5} has a unique quadratic twist over $\F_{5}$ \cite[\href{https://www.lmfdb.org/Variety/Abelian/Fq/1/5/ad}{Abelian Variety 1.5.ad}]{lmfdb}, 
$$
E' : y^{2} = x^{3} + 4x + 2
$$
with $\#E'(\F_{5}) = 3$ and $L(E',t) = 1 - 3t + 5t^{2}$. This time, there are $3$ more inert points than split points, from which we can deduce that on $\P^{1}$, there is a single ramification point at $\infty$, $4$ inert points and $1$ split point. Here's the corresponding table of points for $E'$: 
\begin{center}
\begin{tabular}{|c|c|c|c|c|c|}
    \hline
    point & ramified image & point & split image & point & inert image\\
    \hline
    $O$ & $\infty$ & $(3,1)$ & $3$  & $\left (0,\sqrt{2}\right )$   & $0$ \\
         &          & $(3,4)$ &     & $\left (0,-\sqrt{2}\right )$  & \\
         &          &         &     & $\left (1, \sqrt{2}\right )$  & $1$ \\
         &          &         &     & $\left (1,-\sqrt{2}\right )$  & \\
         &          &         &     & $\left (2,\sqrt{3}\right )$   & $2$ \\
         &          &         &     & $\left (2,-\sqrt{3}\right )$  & \\
         &          &         &     & $\left (4,\sqrt{2}\right )$   & $4$ \\
         &          &         &     & $\left (4,-\sqrt{2}\right )$  & \\
    \hline
\end{tabular}
\end{center}
Using formula (\ref{eq:aq2}), we quickly deduce that $\#E'(\F_{25}) = 27$, which is as expected since $E'\cong E$ over $\F_{25}$. 
\end{ex}

An interesting consequence of our interpretation of $a_{q}(E)$ is that $E$ is supersingular over $\F_{q}$ if and only if the $\F_{q}$-points of $E$ project to an equal number of split and inert points in $\P^{1}$. Here is an example of such a curve. 

\begin{ex}
\label{ex:1.7.a.f7}
Consider the following supersingular elliptic curve over $\F_{7}$ \cite[\href{https://www.lmfdb.org/Variety/Abelian/Fq/1/7/a}{Abelian Variety 1.7.a}]{lmfdb}: 
$$
E : y^{2} = x^{3} + 5x. 
$$
Then $\#E(\F_{7}) = 8$ and $L(E,t) = 1 + 7t^{2}$, so formula (\ref{eq:aq1}) says there are an equal number of split and inert points on $\P^{1}$. With $4$ ramified points downstairs, this forces $2$ split points and $2$ inert points: 
\begin{center}
\begin{tabular}{|c|c|c|c|c|c|}
    \hline
    point & ramified image & point & split image & point & inert image\\
    \hline
    $O$     & $\infty$  & $(2,2)$ & $2$     & $\left (1,\sqrt{6}\right )$ & $1$\\
    $(0,0)$ & $0$       & $(2,5)$ &         & $\left (1,-\sqrt{6}\right )$ & \\
    $(3,0)$ & $3$       & $(6,1)$ & $6$     & $\left (5,\sqrt{3}\right )$ & $5$\\
    $(4,0)$ & $4$       & $(6,6)$ &         & $\left (5,-\sqrt{3}\right )$ & \\
    \hline
\end{tabular}
\end{center}
Also, formula (\ref{eq:aq2}) gives us $\#E(\F_{49}) = 64$. 
\end{ex}

\begin{ex}
\label{ex:1.37.am.f37}
Consider the same elliptic curve $E : y^{2} = x^{3} + 5x$ over $\F_{37}$, where it is no longer supersingular \cite[\href{https://www.lmfdb.org/Variety/Abelian/Fq/1/37/am}{Abelian Variety 1.37.am}]{lmfdb}. This time, $\#E(\F_{37}) = 26$ and $L(E,t) = 1 - 12t + 37t^{2}$, so formula (\ref{eq:aq1}) implies there are $12$ more inert points than split points on $\P^{1}$. With $2$ ramified points downstairs, there must be $12$ split points and therefore $24$ inert points. (The split points have $x$-coordinates $4,8,9,12,15,17,20,22,25,28,29$ and $33$.) Formula (\ref{eq:aq2}) also confirms that $\#E(\F_{1369}) = 1300$.
\end{ex}

In this section, we showed how Theorem~\ref{thm:mainthm} gives an objective interpretation of the coefficients of the $L$-polynomial $L(E,t)$. Famously, these coefficients also have a cohomological origin, namely $L(E,t) = \det(1 - Ft\mid H^{1}(E))$ where $H^{1}(E,\Q_{\ell})$ is the $1$st $\ell$-adic cohomology group of $E$ and $F$ is the Frobenius operator. In Appendix~\ref{app:Lpoly}, we explain how these cohomological coefficients can be interpreted objectively as well, though not in the category of sets as was done in Theorem~\ref{thm:mainthm}.

\subsection{Prime Counts on Graphs}
\label{sec:loopcounts}

In a similar vein, we can use formula (\ref{eq:Iharasplit}) to deduce the zeta function of a complicated graph $Y$ from information about a simpler graph $X$ to which $Y$ admits a double cover $\pi : Y\rightarrow X$. The proofs of Theorems~\ref{thm:mainthmloc} and~\ref{thm:mainthm} show that the Artin--Ihara $L$-function $L(\pi,u) = L(\chi,u)$ can be written 
$$
L(\pi,u) = \sum_{n = 0}^{\infty} f(n)u^{n}
$$
where 
$$
f(n) = \#S_{1}(n)^{+} - \#S_{1}(n)^{-} = \sum_{\len(\alpha) = n} \chi(\alpha). 
$$
The quadratic character $\chi$ is defined on prime loops $\gamma$ by 
$$
\chi(\gamma) = \begin{cases}
    1, & \gamma \text{ is split}\\
    -1, & \gamma \text{ is inert}.
\end{cases}
$$
For example, for $n = 1$, we recover the formula 
\begin{equation}
f(1) = \#\{\text{primes } \gamma \mid \len(\gamma) = 1,\gamma \text{ is split}\} -  \#\{\text{primes } \gamma \mid \len(\gamma) = 1,\gamma \text{ is inert}\}
\end{equation}
in analogy with formula (\ref{eq:aq1}) for $a_{q}(E)$. The higher length formulas allow one to compute the coefficients of $\zeta_{Y}(u)$ recursively, as shown below. 

\begin{ex}
\label{ex:loopcount}
Let $X = B_{2}$ be the bouquet of two loops, pictured below. 
\begin{center}
\begin{tikzpicture}[decoration={markings,mark=at position 0.5 with {\arrow[scale=2]{>}}}]
    \fill (0,0) circle (.07);
    \draw[postaction=decorate] (0,0) arc(180:540:.5) node[right,pos=.5] {$a$};
    \draw[postaction=decorate] (0,0) arc(180:540:1) node[right,pos=.5] {$b$};
    \node at (-1,0) {$X$};
\end{tikzpicture}
\end{center}
The edge adjacency matrix is 
$$
T_{X} = \begin{pmatrix}
    1 & 0 & 1 & 1\\
    0 & 1 & 1 & 1\\
    1 & 1 & 1 & 0\\
    1 & 1 & 0 & 1
\end{pmatrix}
$$
which has reciprocal characteristic polynomial 
$$
\zeta_{X}(u) = \det(1 - uT_{X})^{-1} = \frac{1}{1 - 4u + 2u^{2} + 4u^{3} - 3u^{4}} = 1 + 4u + 14u^{2} + 44u^{3} + 135u^{4} + \ldots
$$
So for example, $X$ has 
\begin{itemize}
    \item $4$ prime loops of length $1$: $a,a^{\iota},b,b^{\iota}$, 
    \item $14$ effective $0$-cycles of length $2$: $ab,ab^{\iota},a^{\iota}b,a^{\iota}b^{\iota},2a,a + a^{\iota},a + b,a + b^{\iota},2a^{\iota},a^{\iota} + b,a^{\iota} + b^{\iota},2b,b + b^{\iota},2b^{\iota}$. 
\end{itemize}
There is a Galois double cover $\pi : Y\rightarrow X$ of this by the graph 
\begin{center}
\begin{tikzpicture}[decoration={markings,mark=at position 0.5 with {\arrow[scale=2]{>}}}]
    \fill (0,0) circle (.07);
    \fill (1,0) circle (.07);
    \draw[postaction=decorate] (0,0) arc(0:360:.5) node[left,pos=.5] {$b_{1}$};
    \draw[postaction=decorate] (0,0) to[out=210,in=330] (1,0);
        \node at (.5,.5) {$a_{1}$};
    \draw[postaction=decorate] (1,0) to[out=150,in=30] (0,0);
        \node at (.5,-.5) {$a_{2}$};
    \draw[postaction=decorate] (1,0) arc(-180:180:.5) node[right,pos=.5] {$b_{2}$};
    \node at (-2.5,0) {$Y$};
\end{tikzpicture}
\end{center}
with $\pi(a_{1}) = \pi(a_{2}) = a$ and $\pi(b_{1}) = \pi(b_{2}) = b$, and likewise for the conjugate edges. Its edge adjacency matrix is 
$$
T_{Y} = \begin{pmatrix}
    0 & 0 & 1 & 0 & 1 & 1 & 0 & 0\\
    0 & 0 & 0 & 1 & 0 & 0 & 1 & 1\\
    1 & 0 & 0 & 0 & 0 & 0 & 1 & 1\\
    0 & 1 & 0 & 0 & 1 & 1 & 0 & 0\\
    0 & 1 & 1 & 0 & 1 & 0 & 0 & 0\\
    0 & 1 & 1 & 0 & 0 & 1 & 0 & 0\\
    1 & 0 & 0 & 1 & 0 & 0 & 1 & 0\\
    1 & 0 & 0 & 1 & 0 & 0 & 0 & 1
\end{pmatrix}
$$
so its zeta function is 
\begin{align*}
    \zeta_{Y}(u) &= \det(1 - uT_{Y})^{-1} = \frac{1}{1 - 4u + 4u^{2} - 4u^{3} - 2u^{4} + 20u^{5} - 12u^{6} - 12u^{7} + 9u^{8}}\\
        &= 1 + 4u + 12u^{2} + 36u^{3} + 114u^{4} + \ldots
\end{align*}

Instead of this unwieldy computation, we can use formula (\ref{eq:Iharasplit}) to simplify things. As above, write 
$$
L(\pi,u) = \sum_{n = 0}^{\infty} f(n)u^{n}
$$
where $f(n) = \#S_{1}(n)^{+} - \#S_{1}(n)^{-}$. In this example, $S_{1}(n)^{+}$ (resp.~$S_{1}(n)^{-}$) is the set of effective $0$-cycles of length $n$ on $X$ containing an even (resp.~odd) number of copies of $a$ and $a^{\iota}$. For instance, $S_{1}(1)^{+} = \{b,b^{\iota}\}$ and $S_{1}(1)^{-} = \{a,a^{\iota}\}$, so $f(1) = 2 - 2 = 0$. Likewise, 
$$
S_{1}(2)^{+} = \{2a,a + a^{\iota},2a^{\iota},2b,b + b^{\iota},2b^{\iota}\} \quad\text{and}\quad S_{1}(2)^{-} = \{ab,ab^{\iota},a^{\iota}b,a^{\iota}b^{\iota},a + b,a + b^{\iota},a^{\iota} + b,a^{\iota} + b^{\iota}\}
$$
so $f(2) = 6 - 8 = -2$. More computations show that $f(3) = 0$ and $f(4) = 7$, and since $L(\pi,u)^{-1}$ must be a polynomial of degree $4$, we can deduce that $L(\pi,u)^{-1} = 1 + 2u^{2} - 3u^{4}$ and thus 
$$
L(\pi,u) = \frac{1}{1 + 2u^{2} - 3u^{4}} = 1 - 2u^{2} + 7u^{4} + \ldots
$$
As a result, formula (\ref{eq:Iharasplit}) implies 
$$
\zeta_{Y}(u) = \zeta_{X}(u)L(\pi,u) = \frac{1}{1 - 4u + 4u^{2} - 4u^{3} - 2u^{4} + 20u^{5} - 12u^{6} - 12u^{7} + 9u^{8}},
$$
confirming the computation above without having to produce the edge adjacency matrix for $Y$. 
\end{ex}

\begin{ex}\label{ex:complicated-voltage-cover}
For a more complicated example, consider the graph $X$ pictured below. 
\begin{center}
\begin{tikzpicture}[scale=1.5,decoration={markings,mark=at position 0.5 with {\arrow[scale=2]{>}}}]
    \fill (-.7,0) circle (.05);
    \fill (.7,0) circle (.05);
    \fill (0,1) circle (.05);
    \draw[postaction=decorate] (-.7,0) to[out=-50,in=230] (.7,0);
        \node at (0,-.5) {$a$};
    \draw[postaction=decorate] (-.7,0) -- (.7,0) node[below,pos=.55] {$b$};
    \draw[postaction=decorate] (-.7,0) to[out=50,in=130] (.7,0);
        \node at (0,.5) {$c$};
    \draw[postaction=decorate] (-.7,0) to[out=80,in=210] (0,1);
        \node at (-.7,.5) {$d$};
    \draw[postaction=decorate] (.7,0) to[out=100,in=330] (0,1);
        \node at (.7,.5) {$e$};
    \draw[postaction=decorate] (0,1) arc(-90:270:.3) node[above,pos=.5] {$f$};
    \node at (-1.5,.3) {$X$};
\end{tikzpicture}
\end{center}
It admits many double covers, such as: 
\begin{center}
\begin{tikzpicture}[scale=1.5,decoration={markings,mark=at position 0.5 with {\arrow[scale=2]{>}}}]
    \fill (-2.5,.7) circle (.05);
    \fill (-2.5,-.7) circle (.05);
    \fill (-.7,0) circle (.05);
    \fill (.7,0) circle (.05);
    \fill (2.5,.7) circle (.05);
    \fill (2.5,-.7) circle (.05);
    \draw[postaction=decorate] (-2.5,.7) to[out=240,in=120] (-2.5,-.7);
        \node at (-2.9,0) {$a^{+}$};
    \draw[postaction=decorate] (2.5,.7) to[out=300,in=60] (2.5,-.7);
        \node at (2.9,0) {$a^{-}$};
    \draw[postaction=decorate] (-2.5,.7) to[out=45,in=120] (2.5,-.7);
        \node at (.6,1.2) {$b^{+}$};
    \draw[postaction=decorate] (2.5,.7) to[out=135,in=60] (-2.5,-.7);
        \node at (-.6,1.2) {$b^{-}$};
    \draw[postaction=decorate] (-2.5,.7) to[out=300,in=60] (-2.5,-.7);
        \node at (-2.1,.15) {$c^{+}$};
    \draw[postaction=decorate] (2.5,.7) to[out=240,in=120] (2.5,-.7);
        \node at (2.1,.15) {$c^{-}$};
    \draw[postaction=decorate] (-2.5,.7) to[out=0,in=150] (.7,0);
        \node at (-.6,.7) {$d^{+}$};
    \draw[postaction=decorate] (2.5,.7) to[out=180,in=30] (-.7,0);
        \node at (.6,.7) {$d^{-}$};
    \draw[postaction=decorate] (-2.5,-.7) to[out=0,in=210] (.7,0);
        \node at (-.6,-.7) {$e^{+}$};
    \draw[postaction=decorate] (2.5,-.7) to[out=180,in=330] (-.7,0);
        \node at (.6,-.7) {$e^{-}$};
    \draw[postaction=decorate] (-.7,0) arc(0:360:.3) node[left,pos=.5] {$f^{+}$};
    \draw[postaction=decorate] (.7,0) arc(180:540:.3) node[right,pos=.5] {$f^{-}$};
    \node at (-3.5,0) {$Y$};
\end{tikzpicture}
\end{center}
The map $\pi : Y\rightarrow X$ sends each edge pair $x^{+},x^{-}$ to $x$, for $x\in\{a,b,c,d,e,f\}$; the map on vertices can be deduced from this. The edge adjacency matrix for $X$ is 
$$
T_{X} = \begin{pmatrix}
    0 & 0 & 0 & 1 & 0 & 1 & 0 & 1 & 0 & 0 & 0 & 0\\ %a
    0 & 0 & 1 & 0 & 1 & 0 & 0 & 0 & 0 & 1 & 0 & 0\\ %a'
    0 & 1 & 0 & 0 & 0 & 1 & 0 & 1 & 0 & 0 & 0 & 0\\ %b
    1 & 0 & 0 & 0 & 1 & 0 & 0 & 0 & 0 & 1 & 0 & 0\\ %b'
    0 & 1 & 0 & 1 & 0 & 0 & 0 & 1 & 0 & 0 & 0 & 0\\ %c
    1 & 0 & 1 & 0 & 0 & 0 & 0 & 0 & 0 & 1 & 0 & 0\\ %c'
    0 & 1 & 0 & 1 & 0 & 1 & 0 & 0 & 0 & 0 & 0 & 0\\ %d
    0 & 0 & 0 & 0 & 0 & 0 & 0 & 0 & 1 & 0 & 1 & 1\\ %d'
    1 & 0 & 1 & 0 & 1 & 0 & 0 & 0 & 0 & 0 & 0 & 0\\ %e
    0 & 0 & 0 & 0 & 0 & 0 & 1 & 0 & 0 & 0 & 1 & 1\\ %e'
    0 & 0 & 0 & 0 & 0 & 0 & 1 & 0 & 1 & 0 & 1 & 0\\ %f
    0 & 0 & 0 & 0 & 0 & 0 & 1 & 0 & 1 & 0 & 0 & 1   %f'
\end{pmatrix}
$$
and so the Ihara zeta function for $X$ is 
\begin{align*}
    \zeta_{X}(u) &= \frac{1}{1 - 2u - 5u^{2} + 6u^{3} + 3u^{4} - 24u^{5} + 38u^{6} + 56u^{7} - 97u^{8} - 54u^{9} + 87u^{10} + 18u^{11} - 27u^{12}}\\
        &= 1 + 2u + 9u^{2} + 22u^{3} + 74u^{4} + \ldots
\end{align*}
%%% The graph $X$ is an example of a {\it voltage graph} \cite{gro} and $\pi : Y\rightarrow X$ is a particular {\it voltage cover} (or the {\it derived graph} associated to a voltage on $X$), so one may compute the Artin--Ihara matrix for $\pi$ by multiplying through each row of $T_{X}$ by $\pm 1$ according to whether the corresponding edge preserves ($+1$) or reverses ($-1$) ``charge'', or side of the graph $Y$. Explicitly, 
One can obtain $Y$ as a \emph{voltage cover} of $X$; i.e., by choosing a voltage assignment for the edges of $X$ as below and allowing $Y$ to be the \emph{derived graph}. See e.g. \cite{gro} for a description. Here we assign voltages as in the following table:
$$
\begin{array}{c|c c c c c c}
\text{edge}         & a & b & c & d & e & f \\
\hline
\alpha(\text{edge}) & 1 & -1 & 1 & -1 & -1 & 1
\end{array}
$$
This determines the voltages of all of the directed edges of $X$, since the voltage of any edge is the inverse of the voltage of its opposite. This allows one to compute the Artin--Ihara matrix for $\pi$ by multiplying through each row of $T_X$ by the voltage assigned to the corresponding edge. In this case, this means that the rows corresponding to $b$, $d$, and $e$, as well as the rows corresponding to their opposite edges $b^\iota$, $d^\iota$, and $e^\iota$, get multiplied by $-1$, while the rest remain the same. Explicitly,
$$
T(\pi) = \begin{pmatrix}
    0 & 0 & 0 & 1 & 0 & 1 & 0 & 1 & 0 & 0 & 0 & 0\\ %a
    0 & 0 & 1 & 0 & 1 & 0 & 0 & 0 & 0 & 1 & 0 & 0\\ %a'
    0 & -1 & 0 & 0 & 0 & -1 & 0 & -1 & 0 & 0 & 0 & 0\\ %b
    -1 & 0 & 0 & 0 & -1 & 0 & 0 & 0 & 0 & -1 & 0 & 0\\ %b'
    0 & 1 & 0 & 1 & 0 & 0 & 0 & 1 & 0 & 0 & 0 & 0\\ %c
    1 & 0 & 1 & 0 & 0 & 0 & 0 & 0 & 0 & 1 & 0 & 0\\ %c'
    0 & -1 & 0 & -1 & 0 & -1 & 0 & 0 & 0 & 0 & 0 & 0\\ %d
    0 & 0 & 0 & 0 & 0 & 0 & 0 & 0 & -1 & 0 & -1 & -1\\ %d'
    -1 & 0 & -1 & 0 & -1 & 0 & 0 & 0 & 0 & 0 & 0 & 0\\ %e
    0 & 0 & 0 & 0 & 0 & 0 & -1 & 0 & 0 & 0 & -1 & -1\\ %e'
    0 & 0 & 0 & 0 & 0 & 0 & 1 & 0 & 1 & 0 & 1 & 0\\ %f
    0 & 0 & 0 & 0 & 0 & 0 & 1 & 0 & 1 & 0 & 0 & 1   %f'
\end{pmatrix}
$$
and its reciprocal characteristic polynomial gives the Artin--Ihara $L$-function for $\pi$: 
\begin{align*}
    L(\pi,u) &= \frac{1}{1 - 2u + 3u^{2} - 6u^{3} + 3u^{4} + 12u^{5} - 10u^{6} + 20u^{7} - 33u^{8} - 42u^{9} + 63u^{10} + 18u^{11} - 27u^{12}}\\
        &= 1 + 2u + u^{2} + 2u^{3} + 10u^{4} + \ldots
\end{align*}
One can also deduce this from the formula $f(n) = \#S_{1}(n)^{+} - \#S_{1}(n)^{-}$, using the fact that an effective $0$-cycle lies in $S_{1}(n)^{+}$ (resp.~$S_{1}(n)^{-}$) if it contains an even (resp.~odd) number of charge-reversing edges, which in this example are the edges $b,b^{\iota},d,d^{\iota},e,e^{\iota}$. By formula (\ref{eq:Iharasplit}), we can compute the Ihara zeta function of $Y$ without computing its edge adjacency matrix: 
$$
\zeta_{Y}(u) = \zeta_{X}(u)L(\pi,u) = \frac{1}{1 - 4u + \ldots + 729u^{24}} = 1 + 4u + 14u^{2} + 44u^{3} + 141u^{4} + \ldots
$$
\end{ex}

As the number of edges in $X$ increases, this becomes more efficient than computing $\zeta_{Y}(u)$ directly. For more examples of computations with Artin--Ihara $L$-functions, see \cite[Sec.~18.4]{ter}.

\subsection{Supersingular Isogeny Graphs}
\label{sec:ss}

Fix distinct primes $p$ and $\ell$ and consider the graph $G = G_{p}(\ell)$ defined as follows. The vertices of $G$ are the supersingular elliptic curves over $\F_{p^{2}}$ (which can be labeled by their $j$-invariants) and two vertices are connected by an edge for each $\ell$-isogeny between the curves. For large $p > \ell$, these graphs appear in the context of cryptography, for example in \cite{clg}. 

In our present context, a key feature of $G$ was identified by Ribet in \cite{rib}, where he showed that $G$ admits a double cover $\widetilde{G}\rightarrow G$ by the dual graph $\widetilde{G}$ of the special fibre at $\ell$ of a Shimura curve $X_{0}^{p\ell}(1)$. As a result, we can use formula (\ref{eq:Iharasplit}) to describe the zeta function of $\widetilde{G}$ in terms of $G$, i.e.~without reference to the Shimura curve itself. 

\begin{ex}
\label{ex:G213}
For $\ell = 2$ and $p = 13$, the supersingular isogeny graph and the dual graph of the special fibre of the Shimura curve are not too hard to draw: 
\begin{center}
\begin{tikzpicture}
    \fill (0,0) circle (.07);
    \fill (2,0) circle (.07);
    \draw (0,0) -- (2,0) node[below,pos=.5] {$a$};
    \draw (0,0) to[out=45,in=135] (2,0);
        \node at (1,.7) {$b^{+}$};
    \draw (0,0) to[out=-45,in=-135] (2,0);
        \node at (1,-.7) {$b^{-}$};
    \node at (-1,0) {$\widetilde{G}$};
    \draw[->] (3,0) -- (4,0);
\end{tikzpicture}
\hspace{.3in}
\begin{tikzpicture}
    \fill (0,0) circle (.07);
    \draw (0,0) arc(180:540:.5) node[right,pos=.5] {$a$};
    \draw (0,0) arc(180:540:1) node[right,pos=.5] {$b$};
    \node at (3,0) {$G$};
\end{tikzpicture}
\end{center}
The Ihara zeta functions and Artin--Ihara $L$-function for the cover $\pi : \widetilde{G}\rightarrow G$ are also easy to compute (although see Remark~\ref{rem:oppositeedge}): 
\begin{align*}
    \zeta_{G}(u) &= \frac{1}{1 - 2u - u^{2} + 2u^{3}} = 1 + 2u + 5u^{2} + 10u^{3} + 21u^{4} + \ldots\\
    L(\pi,u) &= \frac{1}{1 + 2u - u^{2} - 2u^{3}} = 1 - 2u + 5u^{2} - 10u^{3} + 21u^{4} - \ldots\\
    \zeta_{\widetilde{G}}(u) &= \frac{1}{1 - 6u^{2} + 9u^{4} - 4u^{6}} = 1 + 6u^{2} + 27u^{4} + 112u^{6} + \ldots
\end{align*}
Of course, we could have computed $\zeta_{\widetilde{G}}(u)$ indirectly, using formula (\ref{eq:Iharasplit}), as shown in the next example. 
\end{ex}

\begin{ex}
\label{ex:G261}
For $\ell = 2$ and $p = 61$, the double cover looks like
\begin{center}
\begin{tikzpicture}
    %% G tilde
    \fill (0,3) circle (.07);
    \fill (1,3.7) circle (.07);
    \fill (1,2.3) circle (.07);
    \fill (2,3) circle (.07);
    \fill (3.5,3) circle (.07);
    \fill (0,5) circle (.07);
    \fill (1,5.7) circle (.07);
    \fill (1,4.3) circle (.07);
    \fill (2,5) circle (.07);
    \fill (3.5,5) circle (.07);
    \draw (0,5) -- (0,3) node[left,pos=.5] {$a$};
    \draw (0,5) -- (1,5.7) node[above,pos=.4] {$b^{+}$};
    \draw (0,3) -- (1,2.3) node[below,pos=.4] {$b^{-}$};
    \draw (0,5) -- (1,4.3) node[below,pos=.3] {$c^{+}$};
    \draw (0,3) -- (1,3.7) node[above,pos=.3] {$c^{-}$};
    \draw (1,5.7) to[out=-60,in=60] (1,3.7);
        \node at (1,5) {$d^{+}$};
    \draw (1,4.3) to[out=-120,in=120] (1,2.3);
        \node at (1.1,3) {$d^{-}$};
    \draw (1,5.7) -- (2,5) node[above,pos=.6] {$e^{+}$};
    \draw (1,2.3) -- (2,3) node[below,pos=.6] {$e^{-}$};
    \draw (1,4.3) -- (2,5) node[below,pos=.7] {$f^{+}$};
    \draw (1,3.7) -- (2,3) node[above,pos=.7] {$f^{-}$};
    \draw (2,5) -- (3.5,5) node[above,pos=.5] {$g^{+}$};
    \draw (2,3) -- (3.5,3) node[below,pos=.5] {$g^{-}$};
    \draw (3.5,5) to[out=-60,in=60] (3.5,3);
        \node at (4.1,4) {$h^{+}$};
    \draw (3.5,5) to[out=-120,in=120] (3.5,3);
        \node at (2.9,4) {$h^{-}$};
    \node at (5.5,4) (GG) {$\widetilde{G}$};
    %% G
    \fill (0,0) circle (.07);
    \fill (1,.7) circle (.07);
    \fill (1,-.7) circle (.07);
    \fill (2,0) circle (.07);
    \fill (3.5,0) circle (.07);
    \draw (0,0) arc (0:360:.5) node[left,pos=.5] {$a$};
    \draw (0,0) -- (1,.7) node[above,pos=.5] {$b$};
    \draw (0,0) -- (1,-.7) node[below,pos=.5] {$c$};
    \draw (1,.7) -- (1,-.7) node[right,pos=.5] {$d$};
    \draw (1,.7) -- (2,0) node[above,pos=.5] {$e$};
    \draw (1,-.7) -- (2,0) node[below,pos=.5] {$f$};
    \draw (2,0) -- (3.5,0) node[above,pos=.5] {$g$};
    \draw (3.5,0) arc (180:540:.5) node[right,pos=.5] {$h$};
    \node at (5.5,0) (G) {$G$};
    \draw[->] (GG) -- (G);
\end{tikzpicture}
\end{center}
Since we know the dual graph of the special fibre of $X(122)$, we could compute its Ihara zeta function directly, but we will instead use formula (\ref{eq:Iharasplit}) to simplify the computation. Using Magma, we compute 
$$
\zeta_{G}(u) = \frac{1}{1 - 2u + \ldots - 32u^{15}} = 1 + 2u + 3u^{2} + 8u^{3} + 17u^{4} + \ldots
$$
This says that, for example, there are $2$ primes of length $1$, no primes of length $2$ (the $3$ effective $0$-cycles are all composites of the length $1$ primes), etc. Treating $\pi : \widetilde{G}\rightarrow G$ as a voltage cover as in Example \ref{ex:complicated-voltage-cover}, it is straightforward to compute 
$$
L(\pi,u) = \frac{1}{1 + 2u + \ldots - 32u^{15}} = 1 - 2u + 3u^{2} - 8u^{3} + 17u^{4} + \ldots
$$
Then by formula (\ref{eq:Iharasplit}), 
$$
\zeta_{\widetilde{G}}(u) = \zeta_{G}(u)L(\pi,u) = 1 + 2u^{2} + 11u^{4} + 44u^{6} + \ldots
$$
This can be confirmed by computing the Ihara zeta function of $\widetilde{G}$ directly, though it is not recommended. 
\end{ex}

\begin{rem}
\label{rem:oppositeedge}
There is a subtle technical point in the above computations. When forming the edge adjacency matrix for $G$, we are treating the loop $a$ as its own opposite - the corresponding isogeny is its own dual - while the loops $b$ in Example~\ref{ex:G213} and $h$ in Example~\ref{ex:G261} each have a distinct opposite, corresponding to a dual isogeny, which is not shown. This isn't explored directly in \cite{ter}, but it is in \cite{zak}. % It gives the correct answer in this situation. % We will introduce a generalization of graphs called {\it successor sets} in a future article to clarify this point. 
% Took out the blurb about Successor Sets because this is something that does exist in certain places already; I think e.g. in Zhakarov
\end{rem}

It follows from Ribet's work \cite{rib} that $\widetilde{G}$ is the unique connected, bipartite cover of $G$, up to isomorphism. In the language of voltage covers, this means $\widetilde{G}$ is always isomorphic to the voltage cover associated to the voltage which assigns a $-1$ charge to each edge of $G$.
\footnote{Note that two different voltage assignments may give isomorphic covers. For example, the figure in Example~\ref{ex:G261} implies a voltage assignment in which only the edges $a$, $d$, and $h$ (and their opposites) are assigned a voltage of $-1$, while the rest are assigned a voltage of $+1$. However, one may check that the cover is bipartite, and thus is isomorphic to the cover produced by a voltage assignment where every edge is given a voltage of $-1 \in (\{\pm1\},\cdot)$.}
%(However, note that there are sometimes other ways of drawing the double cover $\widetilde{G}\rightarrow G$, as in Example~\ref{ex:G261}.)
In the language of Subsection~\ref{sec:graphs}, all primes of $G$ of odd length are inert in the cover $\widetilde{G}\rightarrow G$, while all primes of even length are split.

Translated to zeta functions, this implies the following. 

\begin{prop}
\label{prop:sszeta}
For distinct primes $p$ and $\ell$, let $G = G_{p}(\ell)$ be the mod $p$ supersingular $\ell$-isogeny graph and let $\widetilde{G}$ be the dual graph of the special fibre of the Shimura curve $X_{0}^{p\ell}(1)$ at $\ell$. Then 
$$
\zeta_{\widetilde{G}}(u) = \zeta_{G}(u)\zeta_{G}(-u). 
$$
\end{prop}

In particular, in $\zeta_{\widetilde{G}}(u)$, the coefficients of odd powers of $u$ are all $0$, reflecting the fact that $\widetilde{G}$ is bipartite. We can also extract the following recursions on coefficients in $\zeta_{\widetilde{G}}(u)$. 

\begin{cor}
Write $\zeta_{G}(u) = 1 + \sum_{n = 1}^{\infty} a_{n}u^{n}$ and $\zeta_{\widetilde{G}}(u) = 1 + \sum_{n = 1}^{\infty} b_{n}u^{n}$. Then for all even $n > 0$, 
$$
b_{n} = \sum_{i = 0}^{n/2 - 1} (-1)^{i}2a_{i}a_{n/2 - i} + (-1)^{n/2}a_{n/2}^{2}. 
$$
\end{cor}

In arithmetic terms, the coefficient $a_{n}$ counts the number of effective $0$-cycles in $G$ of length $n$. On the other hand, $\zeta_{G}(u)$ also encodes the count of {\it $\ell$-isogeny cycles} in $G$, as we explain now. Writing $\zeta_{G}(u)$ in exponential notation as 
$$
\zeta_{G}(u) = \exp\left [\sum_{m = 1}^{\infty} \frac{N_{G}(m)}{m}u^{m}\right ],
$$
the coefficients $N_{G}(m)$ count the number of closed walks of length $m$ in $G$ without backtracking or tails \cite[Def.~4.2]{ter}. 

\begin{cor}
\label{cor:NGformula}
For all $m\geq 1$, $N_{\widetilde{G}}(m) = (1 + (-1)^{m})N_{G}(m)$. 
\end{cor}

Writing $\pi_{G}(m)$ for the number of primes of $G$ of length $m$, we have 
\begin{equation}\label{eq:Nformula}
    N_{G}(m) = \sum_{d\mid m} d\pi_{G}(d),
\end{equation}
so by M\"{o}bius inversion, 
\begin{equation}\label{eq:piformula}
    \pi_{G}(m) = \frac{1}{m}\sum_{d\mid m} \mu(m/d)N_{G}(d)
\end{equation}
where $\mu$ is the classical M\"{o}bius function [{\it loc.~cit.}, Ch.~10]. 

When $G$ is a supersingular isogeny graph, $\pi_{G}(m)$ equivalently counts the number of $\ell$-isogeny cycles of length $m$ in $G$, in the sense of \cite[Def.~3.1]{aclsst}. Asymptotics for the function $\pi_{G}(m)$ are known. For example, the authors in \cite{aclsst} show the following. 

\begin{thm}[{\cite[Thm.~7.1]{aclsst}}]
\label{thm:aclsst}
For primes $p\equiv 1\pmod{12}$, as $m\rightarrow\infty$, $\pi_{G}(m)$ is asymptotically $\ell^{m}/2m$. 
\end{thm}

This result is deduced in [{\it loc.~cit.}] using probabilistic methods. We give an independent proof here. 

\begin{proof}
When $p\equiv 1\pmod{12}$, $G = G_{p}(\ell)$ is an $(\ell + 1)$-regular graph, so the radius of convergence of $\zeta_{G}(u)$ is $\ell^{-1}$. Applying the {\it prime number theorem for graphs} \cite[Thm.~10.1]{ter} to $\zeta_{G}(u)$ gives the asymptotic. 
\end{proof}

As a consequence of this and Corollary~\ref{cor:NGformula}, we obtain an asymptotic for primes in the graph $\widetilde{G}$. 

\begin{cor}
\label{cor:shimuraasymptotic}
Assume $p\equiv 1\pmod{12}$. As $m\rightarrow\infty$ for even $m$, $\pi_{\widetilde{G}}(m)$ is asymptotically $\ell^{m}/m$. 
\end{cor}

\begin{rem}\label{rem:irregular}
When $p\not\equiv 1\pmod{12}$, $G$ is irregular due to the presence of extra automorphisms for supersingular elliptic curves with $j$-invariants $0$ and $1728$. %Nevertheless, the radius of convergence $R$ of $\zeta_{G}(u)$ satisfies $\ell^{-1} \leq R \leq q^{-1}$ where $q + 1$ is the minimal degree of a vertex in $G$ (meanwhile, $\ell + 1$ is always the maximum degree of a vertex in $G_{p}(\ell)$) \cite[Thm.~8.1]{ter}. 
As a result, the two proofs of Theorem~\ref{thm:aclsst} fail for these $p$. On the other hand, it is known \cite[Lem.~22.1]{ter} that if $\widetilde{G}\rightarrow G$ is a cover of graphs (of any degree), then $\zeta_{G}(u)$ and $\zeta_{\widetilde{G}}(u)$ have the same radius of convergence. In our situation, this would seem to imply that the radius of convergence of $\zeta_{G}(u)$ equals $\ell^{-1}$ since the covering graph $\widetilde{G}$ is always $(\ell + 1)$-regular. As a consequence, we would be able to deduce the asymptotic \cite[Thm.~7.1]{aclsst} for all primes $p$. 

However, there is a tension here, as a Galois double cover of a graph $G$ is regular if and only if $G$ itself is regular. This appears to be an oversight in the way Ribet's correspondence is used in this context: if automorphisms are taken into account, there cannot be a $2$-to-$1$ correspondence on edges. We plan to resolve this issue by including the extra automorphisms directly in the structure of $G$, which will appear in a forthcoming sequel to the present paper along with a version of Theorem~\ref{thm:aclsst} for all primes $p$. We expect Corollary~\ref{cor:shimuraasymptotic} to hold for all $p$ as well. 

As further evidence, the authors in \cite{aclsst} predict that the asymptotic in Theorem~\ref{thm:aclsst} holds for all $p \gg \ell$ since the phenomenon of extra automorphisms at $j = 0$ and $1728$ becomes sparse as $p$ grows. Empirical evidence suggests this should hold even for small $p$ when automorphisms are accounted for. 
\end{rem}

%\begin{thm}
%\label{thm:shimuraasymptotic}
%For all primes $p$, we have: 
%\begin{enumerate}[\quad (1)]
%    \item As $m\rightarrow\infty$ for even $m$, $\pi_{\widetilde{G}}(m)$ is asymptotically $\ell^{m}/m$. 
%    \item As $m\rightarrow\infty$ runs over all $m\geq 1$, $\pi_{G}(m)$ is asymptotically $\ell^{m}/2m$. 
%\end{enumerate}
%\end{thm}
%
%\begin{proof}
%That the first asymptotic implies the second follows from Corollary~\ref{cor:NGformula} and formula (\ref{eq:piformula}). To show the first asymptotic, apply the prime number theorem for graphs \cite[Thm.~10.1]{ter} to $\widetilde{G}$, using the fact that $\widetilde{G}$ is $(\ell + 1)$-regular and hence $\zeta_{\widetilde{G}}(u)$ has radius of convergence $\ell^{-1}$. 
%\end{proof}
%
%\begin{ex}
%For $\ell = 2$ and $p = 101$ (note that $p\not\equiv 1\pmod{12}$), the graph $G$ %is depicted in appendix A.1 \href{https://arxiv.org/pdf/1310.7789}{here}
%has zeta function 
%$$
%\zeta_{G}(u) = \frac{1}{1 - 2u - u^{2} + \ldots - 128u^{23}} = 1 + 2u + 5u^{2} + 8u^{3} + 20u^{4} + \ldots
%$$
%and one can use Proposition~\ref{prop:sszeta} to compute 
%$$
%\zeta_{\widetilde{G}}(u) = \zeta_{G}(u)\zeta_{G}(-u) = 1 + 6u^{2} + 33u^{4} + 136u^{6} + 548u^{8} + 2008u^{10} + \ldots
%$$
%as well as the closed walk counts 
%$$
%N_{G}(m) = 2,6,2,30,42,30,86,206,290,446,\ldots
%$$
%and the isogeny cycle counts 
%$$
%\pi_{G}(m) = 2,2,0,6,8,4,12,22,32,40,\ldots
%$$
%which grow like $2^{m - 1}/m$ by Theorem~\ref{thm:shimuraasymptotic}(2). 
%\end{ex}

\begin{rem}
\label{rem:othersscovers}
Our work above with Ribet's double cover $\widetilde{G}\rightarrow G$ already makes it clear that formula (\ref{eq:Iharasplit}) for double covers of graphs has useful applications to supersingular isogeny graphs. In fact, there are several other ways in which covers of graphs make an appearance in the theory: 
\begin{enumerate}[(1)]
    \item The supersingular $\ell$-isogeny graph is almost a double cover of the quaternion algebra $\ell$-ideal graph $G' = G_{p}'(\ell)$, as in \cite{ailms}. More precisely, $G\rightarrow G'$ fails to be $2$-to-$1$ at the $\F_{p}$-points of $G$. As such, this is a natural candidate for some theory of ramified covers of graphs (see Section~\ref{sec:future}). 
    \item One can also add level structure to the graph $G = G_{p}(\ell)$, as in \cite{arp}. For the graph $G_{p}(\ell,N)$ of $\ell$-isogenies between supersingular elliptic curves with level $\Gamma_{0}(N)$ structure, where $N$ is prime to $p$ and $\ell$, there is a graph morphism $G_{p}(\ell,N)\rightarrow G_{p}(\ell)$ which is a degree $N + 1$ Galois cover away from $j = 0,1728$. Therefore, when $p\equiv 1\pmod{12}$, a version of formula (\ref{eq:Iharasplit}) for higher degree covers would imply asymptotics of isogeny cycles in $G_{p}(\ell,N)$. For other $p$, such asymptotics would follow from a version of (\ref{eq:Iharasplit}) for ramified covers. 
    \item Generalizing the $\ell$-isogeny context slightly, one can form a graph $G_{p}(N)$ of cyclic $N$-isogenies among the supersingular elliptic curves over $\overline{\F}_{p}$, for arbitrary $N$ (prime to $p$, say). Then it is natural to ask if there is a Galois cover of graphs $G_{p}(N')\rightarrow G_{p}(N)$ whenever $N\mid N'$. Passing to double covers, viewed as dual graphs of specializations of Shimura curves, one would expect such a cover whenever there is a (possibly ramified) covering map between the Shimura curves. 
    \item In unpublished notes \cite{cla}, Clark also gives an interpretation of supersingular isogeny graphs in terms of \emph{good} reduction of Shimura curves. In this interpretation, a Galois cover of supersingular isogeny graphs would come from an \'{e}tale cover of Shimura curves, and divisibility of their zeta functions is automatic. 
\end{enumerate}
\end{rem}

\begin{rem}
\label{rem:modular}
Another connection between Ihara zeta functions of supersingular isogeny graphs and Hasse--Weil zeta functions of modular curves is established in \cite{sug} and extended to supersingular isogeny graphs with level structure and Shimura curves in \cite{lei-muller}. Their main formula \cite[Cor.~3.4]{lei-muller} is an algebraic relation between zeta functions and as such should have an objective analogue of a similar form as our formula (\ref{eq:genquad}). We will investigate this relationship, as well as the applications to modular forms obtained in \cite{sug}, in future work. 
\end{rem}

\subsection{Quadratic Reciprocity}
\label{sec:recip}

The famous quadratic reciprocity law of Gauss states that for distinct, odd primes $p$ and $q$, their Legendre symbols are intertwined according to 
$$
\legen{p}{q}\legen{q}{p} = (-1)^{(p - 1)(q - 1)/4}. 
$$
Setting $p^{*} = (-1)^{(p - 1)/2}p$ gives us a more compact version of the formula: 
$$
\legen{p^{*}}{q} = \legen{q}{p}. 
$$

Let $K = \Q\left (\sqrt{p^{*}}\right )$. By Theorem~\ref{thm:mainthmloc} or formula (\ref{eq:quadzetalocal}), $\zeta_{K,q}(s) = \zeta_{\Q,q}(s)L_{q}(\chi,s)$ for any prime $q$, where $\chi = \chi^{+} - \chi^{-}$. Then 
$$
\chi(q) = \begin{cases}
    0, &\text{if $q$ is ramified, i.e.~$q = p$}\\
    1, &\text{if $q$ is split}\\
    -1, &\text{if $q$ is inert}
\end{cases}
$$
and by the usual description of splitting types in quadratic extensions, we see that $\chi(q) = \legen{p^{*}}{q}$ for all $q$. On the other hand, the map $\psi : \Z\rightarrow\C^{\times},n\mapsto\legen{n}{p}$ is the unique quadratic Dirichlet character with conductor $p$, so its Dirichlet $L$-function 
$$
L(\psi,s) = \prod_{q} (1 - \psi(q)q^{-s})^{-1}
$$
also satisfies $\zeta_{\Q}(s)L(\psi,s) = \zeta_{K}(s)$; see Remark~\ref{rem:charsplit} below. Matching up $q^{s}$ coefficients produces 
$$
\legen{p^{*}}{q} = \chi(q) = \psi(q) = \legen{q}{p}. 
$$
Thus the algebraic relation between the zeta functions $\zeta_{K}(s)$ and $\zeta_{\Q}(s)$ allow one to ``rediscover'' quadratic reciprocity. 

\begin{rem}
\label{rem:charsplit}
The usual way to prove $\psi(q) = \chi(q)$ is to show that both are equal to $\Frob_{q}(K/\Q)$; $\psi(q) = \Frob_{q}$ follows from the restriction of Frobenius from the cyclotomic extension $\Q(\zeta_{p})/\Q$ to its unique quadratic subfield, which is $K$. However, we can prove it directly for all unramified $q$ using only that $\psi$ is the unique nontrivial primitive quadratic character of conductor $p$. We will show that for unramified $q$, 
\begin{equation}\label{eq:charsplit}
\zeta_{K,q}(s) = \zeta_{\Q,q}(s)L_{q}(\psi,s). 
\end{equation}
(Of course, this formula follows immediately from the fact that the permutation representation of $\Gal(K/\Q)$ splits as ${\bf 1}\oplus\psi$ where ${\bf 1}$ is the trivial representation, but let's do things from scratch.) If $X(K) = \{{\bf 1},\psi\}$ is the group of characters of $\Gal(K/\Q)$, then the map $X(K) \rightarrow \{\pm 1\}\subset\C^{\times}$ defined by $\rho\mapsto\rho(\Frob_{p})$ is a surjective homomorphism of groups which is well-defined whenever $p$ is unramified in $K$. Let $f_{p}$ be the inertia degree at $p$ and $r_{p}$ the number of primes of $\orb_{K}$ lying over $p$, so that $f_{p}r_{p} = 2$. Then we can expand the right side of formula (\ref{eq:charsplit}): 
\begin{align*}
    \zeta_{\Q,p}(s)L_{p}(\psi,s) &= (1 - p^{-s})^{-1}(1 - \psi(p)p^{-s})^{-1} = \prod_{\alpha^{f_{p}} = 1} (1 - \alpha p^{-s})^{-r_{p}}\\
        &= (1 - p^{-f_{p}s})^{-r_{p}} = \prod_{\frak{p}\mid p} (1 - N(\frak{p})^{-s})^{-1}. 
\end{align*}
By Proposition~\ref{prop:quadsplitting}, this is equal to $\zeta_{K,p}(s)$, so formula (\ref{eq:charsplit}) is established. 
\end{rem}

\begin{rem}
A similar argument using $K = \Q(i)$ (resp.~$K = \Q\left (\sqrt{2}\right )$) gives the supplementary laws for $\legen{-1}{p}$ (resp.~$\legen{2}{p}$). 
\end{rem}

A key ingredient here is that $K = \Q\left (\sqrt{p^{*}}\right )$ is the unique quadratic extension of $\Q$ in which $p$ is the only ramified (finite) prime. The argument above also works for any quadratic extension $K/\Q$ in which $p$ ramifies and $q$ does not. 

When $K/\Q$ is replaced by a hyperelliptic curve $C\rightarrow\P^{1}$ over $\F_{q}$, we recover the analogue of quadratic reciprocity for the function field extension $\F_{q}(C)/\F_{q}(t)$. For the rest of this discussion, assume $\char\F_{q} = p\not = 2$. 

Explicitly, fix distinct points $x,y\in\P^{1}(\overline{\F}_{q})$ and set $\deg(x) = d$ and $\deg(y) = e$; for simplicity, we may assume $x,y\not = \infty = [0 : 1]$, i.e.~$x$ and $y$ lie in the affine open $U = \Spec\F_{q}[t] = \A^{1}\subseteq\P^{1}$. Let $C$ be a hyperelliptic curve which is ramified at $x$ but not $y$, e.g.~the Legendre elliptic curve\footnote{Recall that we are allowing elliptic curves to be considered hyperelliptic.} defined by $s^{2} = t(t - 1)(t - x)$ if $x,y\not = [1 : 0]$ or $[1 : 1]$ (which can be achieved after a change of coordinates). Let $\pi : C\rightarrow\P^{1}$ be the usual hyperelliptic map induced by $(t,s)\mapsto [t : s : 1]$. By formula (\ref{eq:quadzetalocal}), we have $Z_{y}(C,T) = Z_{y}(\P^{1},T)L_{y}(C,T)$, where 
$$
L_{y}(C,T) = (1 - \chi(F_{y})T^{\deg(y)})^{-1} \quad\text{for the character}\quad \chi(F_{y}) = \begin{cases}
    0, &\text{if $y$ is ramified}\\
    1, &\text{if $y$ is split}\\
    -1, &\text{if $y$ is inert}.
\end{cases}
$$

Let $F = \F_{q}(t)$ and $K = \F_{q}(C)$ be the function fields of $\P^{1}$ and $C$. On $\P^{1}$, $x$ corresponds to an irreducible polynomial $g(t)\in\F_{q}[t]$ of degree $d$. Consider the quadratic character $\psi = \legen{\cdot}{g} : \F_{q}[t]\rightarrow\C^{\times}$ defined by 
$$
\psi(h) = \legen{h}{g} = \begin{cases}
    0, &\text{if } h = g\\
    1, &\text{if $h$ is a square in } \F_{q}[t]/g\F_{q}[t]\\
    -1, &\text{if $h$ is not a square in } \F_{q}[t]/g\F_{q}[t]. 
\end{cases}
$$
Alternatively, this can be defined by $\legen{h}{g} \equiv h^{(q^{d} - 1)/2} \pmod{g\F_{q}[t]}$. One can show that $\psi$ is a Dirichlet character mod $g$ on the group $(\F_{q}[t]/g\F_{q}[t])^{\times}$. Along the same lines as Remark~\ref{rem:charsplit}, its $L$-function 
$$
L(\psi,T) = \prod_{h\in\F_{q}[t]} (1 - \psi(h)T^{\deg(h)})^{-1}
$$
satisfies $Z_{y}(C,T) = Z_{y}(\P^{1},T)L_{y}(\psi,T)$ for \emph{any} $y\in U$, where $L_{y}(\psi,T) = (1 - \psi(h)T^{\deg(h)})^{-1}$ if $h$ is the irreducible polynomial in $\F_{q}[t]$ corresponding to $y\in U$. Comparing this to formula (\ref{eq:quadzetalocal}), we get $\psi(y) = \chi(F_{y})$. Further, for unramified $y\in U$ with corresponding irreducible polynomial $h$, 
\begin{align*}
    y \text{ is split } &\iff g^{*} \text{ is a square in } \F_{q}[t]/h\F_{q}[t]\\
    y \text{ is inert } &\iff g^{*} \text{ is not a square in } \F_{q}[t]/h\F_{q}[t]
\end{align*}
where $g^{*} = (-1)^{(q^{d} - 1)/2}g$. Therefore $\chi(F_{y}) = \legen{g^{*}}{h}$. Putting it all together, we see that 
$$
\legen{g^{*}}{h} = \chi(F_{y}) = \psi(h) = \legen{h}{g},
$$
which is the function field version of quadratic reciprocity \cite[Ch.~3]{ros}. 

\begin{rem}
Replacing $\F_{q}(C)$ with $K = \F_{q}(t)\left (\sqrt{u}\right )$ for a unit $u\in\F_{q}^{\times}$ in the argument above yields the supplementary law 
$$
\legen{u}{g} = u^{(q - 1)\deg(g)/2}
$$
for all monic irreducible $g\in\F_{q}[t]$. Geometrically, this corresponds to the base change $\P_{k}^{1}\rightarrow\P_{\F_{q}}^{1}$ where $k = \F_{q}\left (\sqrt{u}\right )$, which is an ``arithmetic double cover''. 
\end{rem}

For finite graphs, there are no notions of function fields and coordinate rings to work with. More disheartening is the lack of a clear notion of ramification, so that constructing a quadratic cover ramified at a fixed prime has no natural analogue. However, there is at least one candidate for a definition of ramified cover in the literature \cite{ura,bn}; see Section~\ref{sec:ramgraph} for further discussion.

\section{Future Directions}
\label{sec:future}

\subsection{Higher Degree Extensions}

As explained in Section~\ref{sec:intro}, often when there is a map $Y\rightarrow X$ with nice properties (such as a covering space) the zeta function of $Y$ factors into a product of $L$-functions over $X$. We can categorify the various zeta and $L$-functions in this paper essentially because their coefficients are cardinalities of subsets of $1$-simplices in the relevant simplicial set. Equivalently, the ``character'' $\chi$ measuring these coefficients is quadratic. In the degree $\geq 3$ situation, $L(\pi)^{+} - L(\pi)^{-}$ will need to be replaced with a more general categorical $L$-functor (see Appendix~\ref{app:OLA}). 

\subsection{$L$-Functions}

In modern number theory and arithmetic geometry, results for zeta functions are often subsumed by more general statements for $L$-functions, making these a natural target for our categorification program. Fortunately, we have made progress towards a theory of {\it $L$-functors} which adapts the techniques in the present work to a more general framework that accommodates $L$-functions. We will present this work in a future article, using an objective linear algebra theory that replaces simplicial sets with simplicial objects in other categories (see Appendix~\ref{app:OLA} for a brief introduction). For Artin $L$-functions, their corresponding $L$-functors are linear functors of admissible Galois representations. 

\subsection{Motivic Zeta and $L$-Functions}

To push beyond the applications in this article, it's natural to ask for a further categorification of motivic zeta and $L$-functions, as suggested in \cite[Sec.~4]{kob}. In this direction, the authors in \cite{dh} have constructed a numerical version of Kapranov's motivic zeta function $Z_{mot}(X,t)$ for a $k$-variety $X$ that lies in an incidence algebra\footnote{The authors in \cite{dh} call this the {\it reduced incidence algebra for the poscheme of effective $0$-cycles} of $X$.} constructed from the Grothendieck ring of $k$-varieties, $K_{0}(\cat{Var}_{k})$. As $K_{0}(\cat{Var}_{k})$ is itself a decategorification of the category $\cat{Var}_{k}$, it should still be possible to lift $Z_{mot}(X,t)$ further to an objective incidence algebra, as originally suggested in \cite{kob}. Using the language of $L$-functors to extend the description in Subsection~\ref{sec:motivic}, we will offer such a construction in future work. This will also provide a natural home for motivic $L$-functions, defined as $L(X,V,t) = Z_{mot}((X\otimes V)^{G},t)$ for an algebraic group $G$ acting on $X$ and a $G$-representation $V$. We also plan to unify these with the $L$-functions in the previous paragraph by extending the formalism to stacks. 

\subsection{Supersingular Isogeny Graphs}

We saw in Subsection~\ref{sec:ss} that formula (\ref{eq:Iharasplit}) has practical applications in the theory of supersingular isogeny graphs, specifically when applied to the double cover $\widetilde{G}\rightarrow G_{p}(\ell)$ of the mod $p$ supersingular $\ell$-isogeny graph by the dual fibre of the specialization of the Shimura curve $X_{0}^{p\ell}(1)$ at $\ell$. In Remark~\ref{rem:othersscovers}, we also identified other types of covers that might prove useful in this theory. In particular, the morphism of graphs $G_{p}(\ell)\rightarrow G_{p}'(\ell)$ from \cite{ailms} relates $\ell$-isogenies of supersingular elliptic curves to $\ell$-isogenies of ideals in quaternionic orders, while the morphism $G_{p}(\ell,N)\rightarrow G_{p}(\ell)$ introduces level $\Gamma_{0}(N)$ structure. We propose to study these morphisms as ramified covers of graphs (see below) and extract further asymptotic information for isogeny cycles, hopefully generalizing Theorem~\ref{thm:aclsst}. 

There are many further generalizations to consider. For principally polarized abelian varieties of dimension $g > 1$, one can form the isogeny graph on the \emph{superspecial} locus in $A_{g}$, as in \cite{jz1}, and get similar results for asymptotics of isogeny cycles as well as cycles in a corresponding double cover $\widetilde{G}\rightarrow G$. Dropping the principally polarized and superspecial requirements, one can also form a \emph{simplicial complex} of polarized supersingular isogenies \cite{jz2}, generalizing the simplicial structure of the graph in the $g = 1$ case. We plan to adapt our simplicial approach to study zeta functions of such simplicial complexes in a unified fashion. 

\subsection{Ramified Covers of Graphs}
\label{sec:ramgraph}

It would be natural to ask for a version of the theory outlined in Section~\ref{sec:graphs} for ramified covers of graphs. In particular, given a ramified version of formula (\ref{eq:Iharasplit}), one could obtain more information about loop counts, as in Section~\ref{sec:loopcounts}, and an analogue of quadratic reciprocity. One possible definition of a ramified cover of graphs is introduced in \cite{ura}, refined in \cite{bn} and used in \cite{mm} to prove an ``almost divisibility'' relation for Ihara zeta functions in a ramified cover of regular graphs, analogous to formula (\ref{eq:Iharasplit}). The examples below illustrate this phenomenon explicitly. 

\begin{ex}
Consider the bouquet of two loops $Y = B_{2}$ mapping to the cycle graph $X = B_{1}$: 
\begin{center}
\begin{tikzpicture}[decoration={markings,mark=at position 0.5 with {\arrow[scale=2]{>}}}]
    \fill (0,0) circle (.07);
    \draw[postaction=decorate] (0,0) arc(180:540:.5) node[right,pos=.5] {$a$};
    \draw[postaction=decorate] (0,0) arc(180:540:1) node[right,pos=.5] {$b$};
    \node at (-1,0) {$Y$};
    \draw[->] (3,0) -- (4,0) node[above,pos=.5] {$\pi$};
    \fill (5,0) circle (.07);
    \draw[postaction=decorate] (5,0) arc(180:540:.5) node[right,pos=.5] {$a$};
\end{tikzpicture}
\end{center}
This map is not a covering map in the sense of Section~\ref{sec:graphs}, but rather a {\it harmonic morphism}, which is one notion of ramified cover considered in \cite{ura,bn,mm}. It is easy to compute the Ihara zeta functions of $X$, 
$$
\zeta_{X}(u) = \frac{1}{(1 - u)^{2}} = 1 + 2u + 3u^{2} + 4u^{3} + \ldots
$$
and from Example~\ref{ex:loopcount}, we have that of $Y$, 
$$
\zeta_{Y}(u) = \frac{1}{1 - 4u + 2u^{2} + 4u^{3} - 3u^{4}} = 1 + 4u + 14u^{2} + 44u^{3} + \ldots
$$
In this case, one can check by hand that $\zeta_{X}(u)$ divides $\zeta_{Y}(u)$: 
$$
\frac{\zeta_{Y}(u)}{\zeta_{X}(u)} = \frac{1}{1 - 2u - 3u^{2}} = 1 + 2u + 7u^{2} + 20u^{3} + 61u^{4} + \ldots
$$
It is not clear how to interpret this as an $L$-function of the cover though. 
\end{ex}

\begin{ex}
For another example of a harmonic morphism, consider the bowtie $Y$ covering the cycle graph $X = C_{3}$: 
\begin{center}
\begin{tikzpicture}[decoration={markings,mark=at position 0.5 with {\arrow[scale=2]{>}}}]
    \fill (0,0) circle (.07);
    \fill (-1,.7) circle (.07);
    \fill (-1,-.7) circle (.07);
    \fill (1,.7) circle (.07);
    \fill (1,-.7) circle (.07);
    \draw[postaction=decorate] (0,0) -- (-1,.7);
    \draw[postaction=decorate] (-1,.7) -- (-1,-.7);
    \draw[postaction=decorate] (-1,-.7) -- (0,0);
    \draw[postaction=decorate] (0,0) -- (1,-.7);
    \draw[postaction=decorate] (1,-.7) -- (1,.7);
    \draw[postaction=decorate] (1,.7) -- (0,0);
    \draw[->] (2,0) -- (4,0) node[above,pos=.5] {$\pi$};
    \fill (5,0) circle (.07);
    \fill (6,.7) circle (.07);
    \fill (6,-.7) circle (.07);
    \draw[postaction=decorate] (5,0) -- (6,.7);
    \draw[postaction=decorate] (6,.7) -- (6,-.7);
    \draw[postaction=decorate] (6,-.7) -- (5,0);
\end{tikzpicture}
\end{center}
Here, we also have divisibility: 
\begin{align*}
    \zeta_{X}(u) &= \frac{1}{(1 - u^{3})^{2}} = 1 + 2u^{3} + 3u^{6} + 4u^{9} + \ldots\\
    \zeta_{Y}(u) &= \frac{1}{1 - 4u^{3} + 2u^{6} + 4u^{9} - 3u^{12}} = 1 + 4u^{3} + 14u^{6} + 44u^{9} + \ldots\\
    L(\pi,u) &= \frac{1}{1 - 2u^{3} - 3u^{6}} = 1 + 2u^{3} + 7u^{6} + 20u^{9} + \ldots
\end{align*}
Once again, it is not clear how to interpret the coefficients of $L(\pi,u)$ in terms of effective $0$-cycles on $X$. 
%According to the OEIS, the coefficients of $L(\pi,u)$ might be the number of walks of length $n$ between distinct vertices on $K_{4}$. 
\end{ex}

\begin{ex}
(\cite[Fig.~1]{mm}) Let $X = K_{4}$ and let $Y$ be the ramified triple cover obtained by gluing $3$ copies of $X$ together at a single vertex. The zeta function of $X$ is 
$$
\zeta_{X}(u) = \frac{1}{1 - 8u^{3} - 6u^{4} - 16u^{6} + 24u^{7} - 3u^{8} - 16u^{9} - 24u^{10} + 16u^{12}} = 1 + 8u^{3} + 6u^{4} + 80u^{6} + \ldots
$$
while the zeta function of $Y$ is the reciprocal of a degree $36$ polynomial, which has been suppressed: 
$$
\zeta_{Y}(u) = 1 + 24u^{3} + 18u^{4} + \ldots
$$
However, the quotient $\zeta_{Y}(u)/\zeta_{X}(u)$ is not the reciprocal of a polynomial. Explicitly, 
$$
\frac{\zeta_{Y}(u)}{\zeta_{X}(u)} = \frac{1 - 2u}{P(u)} = 1 + 16u^{3} + 12u^{4} + 268u^{6} + \ldots
$$
where $P(u)$ is a degree $25$ polynomial in $u$. (This ``almost divisibility'' phenomenon is the main focus in \cite{mm}.) This means that whatever $L$-function we get in a hypothetical formula 
$$
\zeta_{Y}(u) = \zeta_{X}(u)L(\pi,u),
$$
it cannot be just a (product of) characteristic polynomial(s). 
\end{ex}

\begin{ex}
Something similar happens if you use a double cover $Y\rightarrow X = K_{4}$, in which case 
$$
\frac{\zeta_{Y}(u)}{\zeta_{X}(u)} = \frac{1 - 2x}{Q(u)} = 1 + 8u^{3} + 6u^{4} + 52u^{6} + \ldots
$$
where $Q(u)$ is a degree $13$ polynomial. 
\end{ex}

One possible solution to the failure of divisibility of Ihara zeta functions is to replace a ramified cover $Y\rightarrow X$ by a cover $Y\rightarrow\X$ where $\X$ is a {\it graph of groups}, as in \cite{zak}. Briefy, if $G$ is the automorphism group of the ramified cover $Y\rightarrow X$, there is a quotient object $\X = [Y/G]$ in the category of graphs with groups, keeping track of the orbits and stabilizers of the $G$-action. If $G$ acts without stabilizers on the edges of $Y$, \cite[Cor.~4.15]{zak} shows that $\zeta_{\X}(u)$ divides $\zeta_{Y}(u)$. 

We will investigate ramified covers, an objective analogue of \cite[Thm.~3.2]{mm} and \cite[Cor.~4.15]{zak}, and a version of quadratic reciprocity for prime cycles in future work, as well as the applications to supersingular isogeny graphs discussed in Remarks~\ref{rem:othersscovers} and~\ref{rem:modular}.

\appendix

\section{Objective Linear Algebra}
\label{app:OLA}

\subsection{Objective Linear Algebra Using the Category $\cat{Set}$}
\label{sec:OLASet}

In Section~\ref{sec:incalgs}, we constructed objective incidence algebras for the various decomposition sets attached to number fields, algebraic curves, CW complexes and graphs. The theory of {\it decomposition spaces} from \cite{gkt1,gkt2,gkt3,gkt-hla,gkt5} extends far beyond ``objective'' techniques in the category of sets, allowing for greater flexibility in our categorifications. To summarize the general theory, we reproduce below the dictionary from Section~\ref{sec:incalgs} in an arbitrary category of spaces.

\subsection{Replacing the Category of Sets}
\label{sec:OLAcat}

Let $(\mathcal{C},\otimes,\mathbb{1})$ be a symmetric monoidal category with coproduct $\oplus$. %Add the condition that $\otimes$ distributes over $\oplus$: for any tuple of objects $(B_i)_{i \in I}$ over any indexing set $I$, that the natural morphism $A \otimes \left( \bigoplus_{i \in I} B_i\right) \to \bigoplus_{i \in I} (A \otimes B_i)$ is an isomorphism. --I think a symmetric monoidal category already includes this
Furthermore, assume $\mathcal{C}$ has a terminal object $*$, and fix a section $s \colon * \to \mathbb{1}$. By abuse of notation, let $s$ also refer to the composition $A \to * \xrightarrow{s} \mathbb{1}$ for any object $A$ of $\mathcal{C}$.

With this setup, we revise the table given in section \ref{sec:OLASet}. 

\begin{center}
\begin{tabular}{|c|c|}
    \hline
    {\bf Linear} & {\bf Objective}\\
    \hline\hline
    field of scalars $k$ & a symmetric monoidal category $\mathcal{C}$\\
    scalar addition $+$ & $\oplus$\\
    scalar multiplication & $\otimes$ \\
    a basis $B$ & an object $B$\\
    a vector $v$ in the basis $B$ & a morphism $v : X\rightarrow B$\\
    the vector space with basis $B$ & the slice category $\mathcal{C}_{/B}$\\
    vector addition $v + w$ & coproduct $v\oplus w : X\oplus Y\rightarrow B$\\
    scalar multiplication $av$ & $A \otimes (v : X\rightarrow B) := (A\otimes X\xrightarrow{\operatorname{id}\otimes v} A\otimes B\xrightarrow{s\otimes \operatorname{id}_{B}}B)$\\
    a matrix $M$ & a span $\roofx{B}{C}{M}{v}{w}$\\
    the linear map with matrix $M$ & the linear functor $w_{!}v^{*} : \mathcal{C}_{/B}\rightarrow\mathcal{C}_{/C}$\\
    matrix multiplication & span composition\\
    \hline
\end{tabular}
\end{center}

\begin{ex}
Fix the base field $\F_q$ and an algebraic closure $\overline{\F}$ of $\F_q$. Let $G = \Gal(\overline{\F}/\F_q)$ be the absolute Galois group of $\F_q$; it is isomorphic to the profinite completion of the integers $\widehat{\Z}$, and is topologically generated by the $q$-power Frobenius $\Fr_q$. We define the category $G$-\cat{Rep} to have objects which are complex vector spaces which have a continuous action of $G$, with $G$-equivariant morphisms. For our purposes, its terminal object is not interesting, as it is the zero vector space. So we use instead the slice category $G$-mod$_{/\C[G]}$. 

The coproduct in $G$-\cat{Rep} is the direct sum $\oplus$, and it can be made into a symmetric monoidal category by way of the tensor product $\oplus$.
\end{ex}

\subsection{Using $G$-Representations}
\label{app:Grep}

The main advantage to using a category such as $G$-\cat{Rep} over $\cat{Set}$ is that objects in $G$-\cat{Rep} have richer decategorifications than those in $\cat{Set}$. Take, for instance, an ordinary $2\times 2$ matrix $A = \begin{pmatrix} a & b \\ c & d\end{pmatrix}$. If $a,b,c,d$ are all nonnegative integers, then $A$ is easily lifted to an objective matrix 
$$
A = \left (\roofx{B}{B}{X}{s}{t}\right )
$$
where $B = \{1,2\}$ and $X = X_{a}\amalg X_{b}\amalg X_{c}\amalg X_{d}$, with $|X_{i}| = i$ and $s$ (resp.~$t$) sending elements in $X_{a},\ldots,X_{d}$ to the number of the column (resp.~row) their label appears in in $A$. However, if any of $a,b,c,d$ are negative, rational, real, complex or beyond, it is only possible to lift $A$ to an objective matrix over a more general category. 

For an explicit example, take 
$$
A = \begin{pmatrix} 1 & -3 \\ -2 & -1\end{pmatrix}. 
$$
In the category $C_{2}$-\cat{Rep} (say, over $\C$), there are two irreducible representations up to isomorphism: $V_{1}$ and $V_{-1}$, where $V_{d}$ is $1$-dimensional with the nontrivial element in $C_{2}$ acting as multiplication by $d$. Set $V = V_{1}\oplus V_{-1}$ and consider the objective matrix 
$$
A = \left (\roofx{V\oplus V}{V\oplus V}{W}{s}{t}\right )
$$
where $W = V_{1}\oplus V_{-1}^{\oplus 3}\oplus V_{-1}^{\oplus 2}\oplus V_{-1}$ and $s$ (resp.~$t$) sends the first and third factors $V_{1}$ and $V_{-1}^{\oplus 2}$ (resp.~the first two factors $V_{1}$ and $V_{-1}^{\oplus 3}$) onto their corresponding factors in the first component of $V\oplus V$, and the remaining factors to their corresponding factors in the second component. Taking the trace of the action of a generator of $C_{2}$ on components recovers the original matrix $A$. If $A$ had entries which were roots of unity, we could replace $C_{2}$ with the appropriate cyclic group $C_{n}$ and recover these entries as well. For more general entries, more general structures are required (i.e.~Galois representations over an appropriate extension of $\Q$ containing the coefficients of the matrix, but possibly more generality is also required). 

In this more general framework, it is possible to categorify formula (\ref{eq:Lfactor}) without splitting up the $L$-function, in contrast to the formula in Theorem~\ref{thm:mainthm}. In short, negative coefficients must be passed off as positive coefficients on the other side of an objective formula over $\cat{Set}$, but they can be represented directly over $C_{2}$-\cat{Rep} using $V_{-1}$. We will describe a vast generalization of this procedure in a future article. For now, we give a brief description of an $L$-functor approach to formula (\ref{eq:Lfactor}). 

\subsection{Objective $L$-Polynomials}
\label{app:Lpoly}

In this section, we will show how the $L$-polynomial $L(E,t) = 1 - a_{q}t + qt^{2}$ of an elliptic curve $E/\F_{q}$ can be interpreted cohomologically in the objective setting. It is well-known that $L(E,t)$ is a characteristic polynomial, namely $\det(1 - tF)$ where $F = \Frob_{q}$ is the Frobenius operator acting on $H^{1}(E,\Q_{\ell})$. There is a general notion of characteristic polynomial in objective linear algebra, but we will only need the following version. 

As in Section~\ref{app:Grep}, let $G = \Gal(\overline{\F}_{q}/\F_{q})$ and let $V$ be any continuous $G$-representation. Define the {\it $L$-functor} of $V$ to be the span 
$$
L(V) = \left (\roof{\bigoplus_{n\geq 0} \Q_{\ell}}{\Q_{\ell}}{\bigoplus_{n\geq 0} \Sym^{n}V}\right ). 
$$
This defines an element in the incidence algebra of $\bigoplus_{n = 0}^{\infty} \Q_{\ell}$, viewed as a simplicial $G$-representation with trivial structure in each level. When $V$ is finite dimensional, taking traces of Frobenius yields the following sequence in the numerical incidence algebra $I_{\#}(\N_{0};\Q_{\ell})$: 
$$
|L(V)|(n) := \Tr(F\mid \Sym^{n}V). 
$$
These assemble into the \emph{reciprocal} characteristic polynomial of $V$: 
$$
\sum_{n = 0}^{\infty} |L(V)|(n)t^{n} = \sum_{n = 0}^{\infty} \Tr(F\mid \Sym^{n}V)t^{n} = \frac{1}{\det(1 - tF\mid V)}. 
$$

In the case of an elliptic curve $E/\F_{q}$, let $V = H^{1}(E,\Q_{\ell})$ so that $\det(1 - tF\mid V) = L(E,t)$. Remark~\ref{rem:card} also implies that 
$$
L(E,t) = \sum_{n = 0}^{\infty} (\chi^{+}(n) - \chi^{-}(n))t^{n}
$$
so as a consequence, the arithmetic functions $L(E) := L(E)^{+} - L(E)^{-}$ and $|L(V)|$ are inverses in the numerical incidence algebra $I_{\#}(\N_{0};\Q_{\ell})$. In fact, these are inverses in an objective incidence algebra. To show this, we have to realize $L(E)$ in the category of $G$-representations. Using the notation from Section~\ref{sec:mainthmglob}, define 
$$
L(E) = \left (\roof{\bigoplus_{n\geq 0} \Q_{\ell}}{\Q_{\ell}}{\Q_{\ell}S_{1}^{+}\oplus\Q_{\ell}S_{1}^{-}}\right )
$$
where $F = \Frob_{q}$ acts on $\Q_{\ell}S_{1}^{+}$ by the identity and on $\Q_{\ell}S_{1}^{-}$ by $-1$. Then in the objective incidence algebra $I\left (\bigoplus_{n\geq 0} \Q_{\ell}\right )$, we get a convolution $L(V)*L(E)$, which has the same trace generating function\footnote{In general, we cannot expect linear functors to have objective inverses \cite{gkt2}.} as the unit functor $\delta$. This shows that we can view $L(V)$ is an objective lift of the reciprocal $L$-polynomial of $E$.

\section{Incidence Algebras and Products}
\label{app:tensor}

In the literature, it is often implicitly assumed that for two monoids (or posets) $X$ and $Y$, the numerical incidence algebra of their product is the tensor product of their incidence algebras: $I(X\times Y) \cong I(X)\otimes I(Y)$. This is only true for \emph{finite} monoids and posets: 

\begin{prop}
Let $X$ and $Y$ be finite monoids or posets. Then $I(X\times Y)\cong I(X)\otimes I(Y)$. 
\end{prop}

\begin{proof}
The product $X\times Y$ is again a finite monoid (resp.~poset) and we have a map 
\begin{equation}\label{eq:tensordual}
I(X)\otimes_{k}I(Y) = k[X_{1}]^{*}\otimes_{k}k[Y_{1}]^{*} \longrightarrow (k[X_{1}]\otimes_{k}k[Y_{1}])^{*} = k[X_{1}\times Y_{1}]^{*} = k[(X\times Y)_{1}]^{*} = I(X\times Y)
\end{equation}
which sends $\sum f_{i}(x)\otimes g_{i}(y)$ to $\sum f_{i}(x)g_{i}(y)$. This is clearly injective and both source and target have the same (finite) dimension, so the map is an isomorphism. 
\end{proof}

When $X$ and $Y$ are infinite, the map (\ref{eq:tensordual}) need not be an isomorphism. 

\begin{ex}
Let $X = Y = \N_{0}$ and denote their incidence algebras by $I(X) = k[[s]]$ and $I(Y) = k[[t]]$, respectively. Then $I(X\times Y) = I(\N_{0}^{2}) \cong k[[s,t]]$ by sending $f : \N_{0}^{2}\rightarrow k$ to its generating function 
$$
F(s,t) = \sum_{i,j\geq 0} f(i,j)s^{i}t^{j}. 
$$
However, it is well-known that $k[[s]]\otimes_{k}k[[t]]$ is not isomorphic to $k[[s,t]]$. Instead, $k[[s,t]]$ is isomorphic to the {\it completed tensor product} of $k[[s]]$ and $k[[t]]$, whose definition we recall now. 
\end{ex}

\begin{defn}
Let $A$ and $B$ be two linearly topologized $k$-vector spaces, say with topologies defined by collections of ideals $\{I_{\alpha}\}$ and $\{J_{\beta}\}$, respectively. The {\bf completed tensor product} $A\widehat{\otimes}_{k}B$ is the inverse limit 
$$
A\widehat{\otimes}_{k}B := \invlim (A/I_{\alpha}\otimes_{k}B/J_{\beta})
$$
over all $I_{\alpha}\subset A$ and $J_{\beta}\subset B$ in the respective fundamental systems of neighborhoods. This is a topological $k$-vector space with the profinite topology. 
\end{defn}

\begin{lem}
There is an isomorphism of topological $k$-algebras 
$$
k[[s]]\widehat{\otimes}_{k}k[[t]] \cong k[[s,t]],
$$
where $k[[s]]$ (resp.~$k[[t]],k[[s,t]]$) has the $(s)$-adic (resp.~$(t)$-adic, $(s,t)$-adic) topology. 
\end{lem}

\begin{proof}
For each $m,n\geq 1$, we have an isomorphism $k[[s]]/(s^{m})\otimes_{k}k[[t]]/(t^{n}) \cong k[[s,t]]/(s^{m},t^{n})$ which preserves multiplication and is compatible with the projections $k[[s,t]]/(s^{m'},t^{n'})\rightarrow k[[s,t]]/(s^{m},t^{n})$ for any $m'\geq m,n'\geq n$. This, together with the isomorphism 
$$
k[[s,t]] \cong \invlim k[[s,t]]/(s^{m},t^{n}),
$$
implies $k[[s]]\otimes_{k}k[[t]] \cong k[[s,t]]$ as topological $k$-algebras. 
\end{proof}

For a locally finite decomposition set $X$, its incidence algebra $I(X)$ is naturally a topological $k$-algebra under the standard topology defined by pointwise convergence, i.e.~$(f_{n})\subset I(X)$ converges if and only if $(f_{n}(x))\subset k$ converges for all $x\in X_{1}$. This topology has the following equivalent description. For each finite set $F\subseteq X_{1}$, let $\U_{F}\subseteq I(X)$ be the subspace of functions vanishing on $F$: 
$$
\U_{F} = \{f\in I(X) \mid f(x) = 0 \text{ for all } x\in F\}. 
$$

\begin{lem}
For any locally finite decomposition set $X$, $\{\U_{F}\}$ is a fundamental system of neighborhoods of $0$ in $I(X)$. In particular, $I(X)$ is a (right) linearly topologized $k$-algebra. 
\end{lem}

\begin{proof}
For a finite subset $F\subseteq X_{1}$, define 
$$
F' = \{d_{2}\sigma\mid \sigma\in X_{2} \text{ with } d_{1}\sigma\in F\}. 
$$
Then $F'$ is finite since $X$ is locally finite (in particular, because the maps $d_{i} : X_{2}\rightarrow X_{1}$ are finite) and $F\subseteq F'$ since for any $x\in F$, the degenerate $2$-simplex $\sigma = s_{1}x$ satisfies $d_{2}\sigma = x$ and $d_{1}\sigma = d_{1}s_{1}x = x\in F$. We claim $\U_{F'}$ is a right ideal of $I(X)$. To show this, we first prove $F'' = F'$, for which we only need to prove $F''\subseteq F'$. For $x\in F''$, take $\sigma\in X_{2}$ with $d_{2}\sigma = x$ and $y := d_{1}\sigma\in F'$. Then there is some $\tau\in X_{2}$ with $d_{2}\tau = y$ and $z := d_{1}\tau\in F$, as pictured below. 
\begin{center}
\begin{tikzpicture}[scale=2]
    \node at (0,0) (0) {$\bullet$};
    \node at (.5,.7) (1) {$\bullet$};
    \node at (1,0) (2) {$\bullet$};
    \node at (.5,-.7) (3) {$\bullet$};
    \draw[->,thick,dashed,red] (1) -- (3) node[right,pos=.7] {$w$};
    \draw[line width=20,white] (.2,.15) -- (.8,.15);
    \draw[->] (0) -- (2) node[above,pos=.5] {$y$};
    \draw[->] (0) -- (1) node[left,pos=.5] {$x$};
    \draw[->] (1) -- (2) node[right,pos=.5] {$y'$};
    \draw[->] (0) -- (3) node[left,pos=.5] {$z$};
    \draw[->] (2) -- (3) node[right,pos=.5] {$z'$};
\end{tikzpicture}
\end{center}
Since every decomposition set is $2$-Segal \cite[Rmk.~3.2]{gkt1}, there is a unique $3$-simplex $A$ obtained by filling in the red $1$-simplex in the figure above, so that $d_{3}A = \sigma$ and $d_{1}A = \tau$. The face $\nu := d_{2}A$, shown above with faces $x,w$ and $z$, witnesses the membership of $x$ in $F'$, since $d_{2}\nu = x$ and $d_{1}\nu = z\in F$. This implies $\U_{F'}$ is a (right) ideal, since for any $f\in\U_{F'}$, $g\in I(X)$ and $x\in F'$, we have 
\begin{equation}\label{eq:leftideal}
(f*g)(x) = \sum_{d_{1}\sigma = x} f(d_{2}\sigma)g(d_{0}\sigma) = \sum_{d_{1}\sigma = x} 0\cdot g(d_{0}\sigma) = 0. 
\end{equation}
As a result, the $\U_{F'}$ form a fundamental system of neighborhoods of $0$ which are (right) ideals of $I(X)$. 
\end{proof}

\begin{ex}
The argument above using the $2$-Segal condition is best illustrated in an example. Let $\mathcal{C}$ be a locally finite category, or equivalently, a locally finite $1$-Segal set, which is also $2$-Segal by \cite[Prop.~1.3]{boors}. Then a morphism $x$ lies in $F''$ if there is some composition $y = y'\circ x$ with $y\in F'$. In turn, this means there is some decomposition $z = z'\circ y$ with $z\in F$. The composition $z = z'\circ (y'\circ x) = (z'\circ y')\circ x$ lies in $F$, so $x\in F'$. Hence $F''\subseteq F'$. In addition, the locally finite condition ensures that there are only a finite number of compositions of every morphism in $\mathcal{C}$, so the sums in equation (\ref{eq:leftideal}) are finite. 
\end{ex}

\begin{rem}
One can construct a fundamental system of neighborhoods of $0$ consisting of \emph{left} ideals of $I(X)$ by replacing $F'$ above with its ``right companion''
$$
F' = \{d_{0}\sigma \mid \sigma\in X_{2} \text{ with } d_{1}\sigma\in F\}. 
$$
(Here, ``right'' refers to the right face $d_{0}\sigma$ of a $2$-simplex $\sigma$, which produces a left ideal $\U_{F'}$.) If, as in most situations in this article, $X$ is a commutative monoid, then $I(X)$ is commutative ring and in particular is linearly topologized by two-sided ideals. 
\end{rem}

%Here's another idea: call a finite subset $F\subseteq X_{1}$ {\it admissible} (or something else?) if for every $x\in F$ and $\sigma\in X_{2}$ such that $d_{1}\sigma = x$, we also have $d_{0}\sigma,d_{2}\sigma\in F$. Then $\U_{F}$ is a two-sided ideal: 
%$$
%(g*f)(x) = \sum_{d_{1}\sigma = x} g(d_{2}\sigma)f(d_{0}\sigma) = 0 = \sum_{d_{1}\sigma = x} f(d_{2}\sigma)g(d_{0}\sigma) = (f*g)(x)
%$$
%for all $f\in\U_{F},g\in I(X)$ and $x\in F$. Moreover, every finite set $F$ is contained in an admissible finite set, namely 
%$$
%F' = \{x\in X_{1}\mid d_{2}\sigma,d_{0}\sigma\in F\text{ for every } \sigma\in X_{2} \text{ with } d_{1}\sigma = x\}. 
%$$
%{\color{purple} Tweak this; it's not clear why $F\subseteq F'$ but it feels close to the right thing}

\begin{lem}
\label{lem:incalgcomplete}
$I(X)$ is complete with respect to the topology induced by $\{\U_{F}\}$. 
\end{lem}

\begin{proof}
We need to show that 
$$
I(X) \cong \invlim I(X)/\U_{F},
$$
where the limit runs over all finite subsets $F\subseteq X_{1}$, directed by inclusion. %For each inclusion $F\subseteq F'$, we get a natural projection $I(X)/\U_{F'}\rightarrow I(X)/\U_{F}$; it is clear that these are compatible with sequences of inclusions $F\subseteq F'\subseteq F''$ and with the projections $I(X)\rightarrow I(X)/\U_{F}$. So we get a map $\Psi : I(X) \rightarrow \invlim I(X)/\U_{F}$ which is injective because $\bigcap \U_{F'} = 0$. On the other hand, 
Each $I(X)/\U_{F}$ naturally identifies with the set of functions on $F$: 
\begin{align*}
    I(X)/\U_{F} &\xrightarrow{\;\sim\;} \Hom(F,k)\\
    f + \U_{F} &\longmapsto f|_{F}. 
\end{align*}
This is clearly well-defined and compatible with the projections $I(X)/\U_{F'}\rightarrow I(X)/\U_{F}$ and $\Hom(F',k)\rightarrow\Hom(F,k)$ for $F'\supseteq F$. Then since $\bigcup F = X_{1}$, we have 
$$
I(X) = \Hom(X_{1},k) = \Hom\left (\bigcup F,k\right ) \cong \invlim \Hom(F,k) \cong \invlim I(X)/\U_{F},
$$
as required. 
\end{proof}

\begin{lem}
The standard topology on $I(X)$ is equivalent to the topology induced by $\{\U_{F}\}$. 
\end{lem}

\begin{proof}
In the topology of pointwise convergence, a sequence $(f_{n})\subset I(X)$ convergences if and only if $(f_{n}(x))$ converges for all $x\in X_{1}$, but since $k$ has the discrete topology, this is the same as $(f_{n}(x))$ being eventually constant for all $x\in X_{1}$. By Lemma~\ref{lem:incalgcomplete}, $I(X) \cong \invlim I(X)/\U_{F}$ where $F$ ranges over all finite subsets of $X_{1}$, and vanishing on some finite set $F$ is equivalent to being eventually constant, so we see that the standard topology on $I(X)$ is equivalent to the profinite topology. 
\end{proof}

\begin{prop}
Let $X$ and $Y$ be locally finite decomposition sets. Then $I(X\times Y) \cong I(X)\widehat{\otimes}_{k}I(Y)$ as topological $k$-algebras. 
\end{prop}

\begin{proof}
By Lemma~\ref{lem:incalgcomplete}, it suffices to construct compatible isomorphisms 
$$
I(X)/\U_{F}\otimes_{k}I(Y)/\U_{G} \cong I(X\times Y)/\U_{F\times G}
$$
for all finite subsets $F\subseteq X_{1}$ and $G\subseteq Y_{1}$. Such maps are given by 
\begin{align*}
    I(X)/\U_{F}\otimes_{k} I(Y)/\U_{G} \cong \Hom(F,k)\otimes_{k}\Hom(G,k) &\xrightarrow{\;\psi_{F,G}\;} \Hom(F\times G,k) \cong I(X\times Y)/\U_{F\times G}\\
    f\otimes g &\longmapsto f\bullet g
\end{align*}
where $(f\bullet g)(x,y) = f(x)g(y)$. These are visibly linear and compatible with the projection maps on both sides. Since $F$ and $G$ are finite, $\Hom(F,k),\Hom(G,k)$ and $\Hom(F\times G,k)$ are all spanned by indicator functions, which implies each $\psi_{F,G}$ is surjective. Finally, dimension counting shows each $\psi_{F,G}$ is an isomorphism, so they assemble into an isomorphism $\psi : I(X)\widehat{\otimes}_{k}I(Y) \xrightarrow{\sim} I(X\times Y)$ as desired. 
\end{proof}

%%% REFERENCES %%%

\end{document}